\newtheorem{definition}{Definition}
\newtheorem{theorem}{Theorem}
\newtheorem{proposition}{Proposition}
\newtheorem{remark}{Remark}
\newtheorem{lemma}{Lemma}
\newtheorem{corollary}{Corollary}
\newtheorem{example}{Example}
\newcommand{\emptyword}{{\textup{\textbf{\o{}}}}}
\newcommand{\word}[1]{{{\boldsymbol{#1}}}}
\newcommand{\length}[1]{\vert \vert \vert #1 \vert \vert \vert}
\title{Linear independence properties of the signature components of time-augmented stochastic processes}
\author[1]{Arthur Bourdon \footnote{Corresponding author : arthur.bourdon@milliman.com; arthur.bourdon@enpc.fr}}
\author[2]{Benjamin Jourdain}
\author[3]{Hervé Andrès}
\affil[1,3]{Milliman R\&D, Paris, France}
\affil[1,2]{CERMICS, ENPC, Institut Polytechnique de Paris, CNRS, Marne-la-Vallée, France \& MATHRISK team-project, Inria Paris, France}
\date{\today}
\begin{document}

\maketitle

\begin{center}
\textbf{\large Abstract}
\end{center}

\setlength{\leftskip}{1cm} 
\setlength{\rightskip}{1cm} 
Adding the time as a component of a stochastic process before computing its signature terminal value ensures injectivity and supports universal approximation results, but it induces linear dependence among the components of the signature terminal value. For any natural number $N$, the terminal values of the signature components associated with words of length not greater than $N$ are the image of the terminal values of the signature components associated with words of length $N$ by some universal linear map. We generalize this result by exhibiting other subfamilies of components -- represented by subfamilies of words -- with the same representation property. When considering the signature of the solution to a stochastic differential equation with a uniformly elliptic diffusion coefficient, we show that any such subfamily of components is linearly independent for the almost-sure equality and therefore provides a basis of the linear span of all components associated with words of length not greater than $N$. The linear independence of these subfamilies is preserved for the affine interpolation of this solution on a grid with a sufficiently small time step. We characterize bases of components with minimal computation cost. Finally, we remark that the subfamilies of words obtained above share a similar representation property when applied to the time-augmented EFM signature recently introduced in \citep{abi2025exponentially}. For a Brownian semimartingale with a non-degenerate diffusion coefficient, we show that any such subfamily of components of its time-augmented EFM signature is almost-surely linearly independent for the $dt$-a.e. equality.
\newline
\newline
\textbf{Keywords :} Signature, Linear regression, Time-augmentation, Linear independence.
\setlength{\leftskip}{0cm} 
\setlength{\rightskip}{0cm} 

\tableofcontents

\section{Introduction}

The signature plays a fundamental role in the analysis of paths and functionals defined on a path space. The signature of a path has important properties.

\begin{enumerate}[label=(\roman*)]
    \item \textbf{The signature encodes the path uniquely} up to translation, reparameterization and tree-like equivalence \citep{hambly2010uniqueness}. The signature uniquely determines the path if we fix its starting point and if we add the time variable as a component \citep{fermanian2022functional}. These two transformations are known as the \textit{base-point augmentation} and \textit{time-augmentation} of a path.
 
    \item \textbf{Linear functionals on signature are universal approximators}. Indeed the linear functionals of the signature form a point-separating algebra -- comparable to polynomials on finite-dimensional
spaces -- and thus any continuous functional defined on the space of paths starting at $0$ at $t=0$ can be well approximated on compact sets with a linear functional of the terminal value of the signature of the time-augmented path. We denote by $\widehat{X}=(\widehat{X}_{t}^{\word{0}},\widehat{X}_{t}^{\word{1}},\cdots,\widehat{X}_{t}^{\word{d}})_{t \in [0,T]} :=\left(t,X_{t}^{\word{1}},\cdots,X_{t}^{\word{d}}\right)_{t \in [0,T]} :[0,T] \rightarrow \mathbb{R}^{d+1}$ the time-augmented path of $(X_t)_{t \in [0,T]}=\left(X_{t}^{\word{1}},\cdots,X_{t}^{\word{d}}\right)_{t \in [0,T]}$.
\end{enumerate}

These two important properties of the signature have enabled the development of several results on the approximation of functionals defined on a path space. While the original formulation involves the \textit{uniform} approximation on \textit{compact} sets, other formulations exist when the compactness assumption is relaxed \citep{cuchiero2023global,bayer2023primal} and when the approximation is in an $L^p$ sense \citep{bayer2023primal,bayer2025pricingamericanoptionsrough} using weighted spaces and the notion of robust signature. The signature of a path is well defined in a pathwise and non-probabilistic manner if the path is of finite variation. For a given path of bounded variation $X:=(X_{t}^{\word{1}},\cdots,X_{t}^{\word{d}})$, the signature of the time-augmented path $(\widehat{X}_{t \le T}):=(t,X_t)_{t \le T}$ is defined as the collection of all running iterated integrals of $\widehat{X}$ against itself. For each word $\word{i_1 \cdots i_n}$ with each letter taking its values in the alphabet $\mathcal{A}=\left \{\word{0},\cdots,\word{d}\right \}$ and for all $t \le T$ we define

\begin{align*}
\begin{cases}
\widehat{\mathbb{X}}_{t}^{\word{i_1 \cdots i_n}} = \int_{0 < t_1 < \cdots <t_n<t} d\widehat{X}_{t_1}^{\word{i_1}} \cdots d\widehat{X}_{t_n}^{\word{i_n}}, \\
\widehat{\mathbb{X}}_{t}^{\emptyword}=1,
\end{cases}
\end{align*}
where each letter encodes a canonical basis vector of $\mathbb{R}^{d+1}$: $\widehat{X}_{s}^{\word{0}}=s$ and $\widehat{X}_{s}^{\word{i}} = X_{s}^{\word{i}}$ for $\word{i} \in \{\word{1},\cdots,\word{d}\}$ and $s \in [0,T]$. The signature of $\widehat{X}$ evaluated at time $t$ is the collection of all these integrals: $\widehat{\mathbb{X}}_t = \left(\widehat{\mathbb{X}}_{t}^{\word{i_1 \cdots i_n}}\right)_{(\word{i_1}, \cdots, \word{i_n}) \in \mathcal{A}^n, n \in \mathbb{N}}$ where $\mathbb{N}$ denotes the set of non-negative integers. For a given truncation order $N \in \mathbb{N}$, we can define the $N$-step truncated time-augmented signature of $X$ as $\widehat{\mathbb{X}}^{\le N}:=\left(\widehat{\mathbb{X}}^{\word{i_1 \cdots i_n}}\right)_{(\word{i_1}, \cdots, \word{i_n}) \in \mathcal{A}^n, n \le N} : [0,T] \rightarrow \mathbb{R}^{\frac{(d+1)^{N+1}-1}{d}}$. There also exists a definition of the signature in a probabilistic setting when considering stochastic processes that are càdlàg semimartingales which do not have finite variation \citep{friz2010multidimensional,friz2017general}. In the case of continuous semimartingales, the signature is then defined as the collection of all iterated integrals using the Stratonovich integration rule.

In a learning task context, we want to approximate an unknown functional $F : \Lambda_T \rightarrow \mathbb{R}$ defined on a path space $\Lambda_T \subset (\mathbb{R}^d)^{[0,T]}$. In practice, we cannot cover uniformly $\Lambda_T$: we need to define a probability measure $\mathbb{P}_X$ on $\Lambda_T$. Thus we want to approximate $F$ in $L^2(\mathbb{P}_X)$. For a fixed truncation order $N \in \mathbb{N}$, we want to minimize the following loss function

\begin{align}
\label{loss}
\nonumber \mathcal{L} : ~~&\mathbb{R}^{\frac{(d+1)^{N+1}-1}{d}} \longrightarrow \mathbb{R}^{+} \\
   &\theta \longmapsto \mathbb{E}\left[\left|F(X) - \theta^{\top} \widehat{\mathbb{X}}_{T}^{\le N}\right|^2\right],
\end{align}
where $X \sim \mathbb{P}_X$, $u^{\top}$ denotes the transposed row vector associated with the column vector $u$ and $u^{\top} v$ denotes the scalar product between $u$ and $v$ for any given column vectors $u,v \in \mathbb{R}^m$. We define a $N$-step signature proxy as a function $X \mapsto \theta^{\top} \widehat{\mathbb{X}}_{T}^{\le N}$ for a given $\theta \in \mathbb{R}^{\frac{(d+1)^{N+1}-1}{d}}$. A $N$-step signature proxy is said to be optimal if it is the closest $N$-step signature proxy to $F$ with respect to the $L^2(\mathbb{P}_X)$ induced metric. The loss function is strictly convex -- and hence admits a unique argmin -- if and only if the second order moment of the features $\mathbb{E}\left[\left(\widehat{\mathbb{X}}_{T}^{\le N}\right) \left(\widehat{\mathbb{X}}_{T}^{\le N}\right)^{\top} \right]$ is positive definite, which is equivalent to the linear independence of the features in $L^2$. We will see that this property does not hold since for any given truncation order $N \in \mathbb{N}$, the signature components associated with words of length $N$ have the same linear span as all signature components of the $N$-step truncated signature: a strict subset of features is sufficient to span the other ones. Finding a basis of this span is interesting for several reasons:

\begin{enumerate}
    \item it is necessary and sufficient for the uniqueness of the representation of the optimal $N$-step signature proxy in the considered basis. Hence the optimal model is identifiable and this improves interpretability, 
    \item the storage cost of features is reduced,
    \item the computational cost is reduced,
    \item the prediction performance is preserved in theory, while empirical improvements may arise in practice due to improved conditioning, reduced variance, or more stable cross-validation.
\end{enumerate}
We can cite \cite{kohavi1997wrappers,blum1997selection,guyon2003introduction} which study methods to find subsets of features that have a good generalization performance. However these methods can be heavily time consuming, in particular when the number of features is high (which is the case with the signature). Another way of reducing the dimension of the problem is to perform a Partial Components Regression which relies on a spectral decomposition of the empirical variance matrix of the features but the construction of orthogonal features which is learned on a training set can suffer from overfitting and lack of generalization performance, in particular in a high dimension setup. The Lasso regression, which promotes sparsity in the estimated coefficients, is widely used as a means of implicitly performing feature selection. However, this approach can suffer from instability and lack of consistency in the selected subset of variables \cite{zhao2006model}. In particular, the theoretical guarantees of the Lasso -- such as sign consistency -- rely on conditions like the \textit{irrepresentable condition}, which assumes that the active features are linearly independent and not excessively correlated with the inactive ones. When the features are linearly dependent or highly collinear, the notion of a unique active subset becomes ill-defined, and the irrepresentable condition necessarily fails. In such cases, the Lasso may still provide accurate predictions but cannot be expected to identify a unique or stable subset of relevant variables. Moreover, the method may involve a substantial computational cost when applied to large-scale or ill-conditioned problems. Knowing a-priori which subfamilies of features are linearly independent in $L^2(\mathbb{P}_X)$ is the ideal setting since the computational cost of ignoring a given set of features is null. We can cite \cite{guo2025consistency} which studies the consistency of Lasso regression using signature, both theoretically and numerically: however it relies on the assumption that the joint distribution of the signature components is non-degenerate, finding a basis of the features is a necessary condition to satisfy this assumption. Let us now explain why the shuffle product property of the signature implies that whatever $\mathbb{P}_X$, the components of $\widehat{\mathbb{X}}_{T}^{\le N}$ can never be linearly independent. We define $\mathcal{W}_{0} = \{\emptyword\}$ and $\mathcal{W}_N = \{ \word{i_1} \word{i_2} \cdots \word{i_N} ~|~ (\word{i_1},\cdots,\word{i_N}) \in \mathcal{A}^{N} \}$  for $N \in \mathbb{N}^*=\mathbb{N} \setminus \{0\}$. We also define $\mathcal{W}_{\le N} = \bigsqcup_{k=0}^{N} \mathcal{W}_{k}$ for $N \in \mathbb{N}$ and $\mathcal{W}_{< \infty} = \bigsqcup_{k=0}^{\infty} \mathcal{W}_{k}$, where $\bigsqcup$ denotes the disjoint union. We denote the length of a word $\word{w} \in \mathcal{W}_{< \infty}$ by $\|\word{w}\|$.

\begin{definition}[Concatenation]
\label{concatenation}

We define the concatenation between two words in $\mathcal{W}_{< \infty}$. Let $N,M \in \mathbb{N}^*$, $\word{w} = \word{i_1} \cdots \word{i_N} \in \mathcal{W}_{N}$ and $\word{v}= \word{j_1} \cdots \word{j_M} \in \mathcal{W}_{M}$. Then $\word{w}\word{v} = \word{i_1} \cdots \word{i_N} \word{j_1} \cdots \word{j_M} \in \mathcal{W}_{N+M} $. The word $\emptyword$ is the only word such that $\word{w} \emptyword = \emptyword \word{w} = \word{w}$ for every $\word{w} \in \mathcal{W}_{<\infty}$.

\end{definition}

\begin{definition}[Span of words set]
We define the set of all finite linear combinations of words $\word{w} \in B \subset \mathcal{W}_{< \infty}$ by 

\begin{align*}
    \text{Span}(B) = \left\{ \sum_{\word{w} \in B} c_{\word{w}} \word{w} ~|~ \{ \word{w} \in B , c_{\word{w}} \neq 0 \} \text{ is finite} \right\}.
\end{align*}
If $B$ is a finite subset of $\text{Span}(\mathcal{W}_{<\infty})$, we define $\text{Span}(B)$ analogously. 
\end{definition}

We can extend the concatenation between two elements of $\text{Span}(\mathcal{W}_{<\infty})$ by bilinearity. We are now able to define the shuffle product between two words.

\begin{definition}[Shuffle product]
\label{def shuffle product}

The shuffle product between two words $\shuffle : \mathcal{W}_{<\infty} \times \mathcal{W}_{<\infty}  \longrightarrow \text{Span}(\mathcal{W}_{<\infty})$ is defined inductively for all $\word{w},\word{v} \in \mathcal{W}_{<\infty}$ and $\word{i},\word{j} \in \mathcal{A}$ as follows:

\begin{align*}
\begin{cases}
\word{w}\shuffle\emptyword=\emptyword \shuffle \word{w} = \word{w} \\
(\word{w}\word{i}) \shuffle (\word{v}\word{j}) = (\word{w} \shuffle (\word{v}\word{j}))\word{i} + ((\word{w}\word{i})\shuffle \word{v})\word{j}.
\end{cases}
\end{align*}
\end{definition}

The shuffle product between two words corresponds to the shuffling of these two words, while keeping the order of letters in each word as illustrated by the following example:

\begin{example}

\begin{align*}
    &\word{1} \shuffle \word{21} = \word{121} + 2 \cdot \word{211}
\end{align*}
\end{example}

The shuffle product can be extended to $\text{Span}(\mathcal{W}_{< \infty}) \times\text{Span}(\mathcal{W}_{< \infty})$ by bilinearity.

\begin{remark}
\label{remark shuffle}
We can easily observe that the words which appear in the shuffle of $\word{w}$ and $\word{v}$ all have length $\|\word{w}\| + \|\word{v}\|$.
\end{remark}

\begin{definition}[Dual bracket]
\label{dual bracket}

Let $\ell =\sum_{\word{w} \in \mathcal{W}_{< \infty}} c_{\word{w}} \word{w} \in \text{Span}(\mathcal{W}_{< \infty})$, and $\widehat{\mathbb{X}}_{T}$ be the terminal value of the time-augmented signature of some path $X : [0,T] \rightarrow \mathbb{R}^d$. We define the dual bracket:

\begin{align*}
    \left<\ell, \widehat{\mathbb{X}}_t \right>&:= \sum_{\word{w} \in \mathcal{W}_{< \infty}} c_{\word{w}} \widehat{\mathbb{X}}_{t}^{\word{w}} \in \mathbb{R}.
\end{align*}

\end{definition}

The signature of any given path respects the so-called shuffle product property which is a strong algebraic relation. This property tells us that any product of terms of the signature can be rewritten as a linear combination of higher order terms in an explicit fashion. To illustrate this point let us consider a path $X:=(X^{\word{1}},X^{\word{2}}) : [0,T] \rightarrow \mathbb{R}^2$. According to the integration by parts formula we get $\mathbb{X}^{\word{1}}_{T}\mathbb{X}^{\word{2}}_{T} = \int_{0}^{T} dX_{t}^{\word{1}}\int_{0}^{T} dX_{t}^{\word{2}} = \int_{0}^{T} \int_{0}^{t} dX_{s}^{\word{1}} dX_{t}^{\word{2}} + \int_{0}^{T} \int_{0}^{t} dX_{s}^{\word{2}} dX_{t}^{\word{1}} = \mathbb{X}_{T}^{\word{12}} + \mathbb{X}_{T}^{\word{21}}$: the integration by parts formula is just some particular case of the shuffle product property.

\begin{proposition}[Shuffle product property]

Let $X :[0,T] \rightarrow \mathbb{R}^d$ be a path such that its time-augmented signature $\widehat{\mathbb{X}}$ is well defined. Then 

\begin{align*}
\forall t \in [0,T], \forall \word{w},\word{v} \in \mathcal{W}_{<\infty} , \left<\word{w},\widehat{\mathbb{X}}_t\right>\left<\word{v},\widehat{\mathbb{X}}_t\right>=\left<\word{w} \shuffle \word{v},\widehat{\mathbb{X}}_t\right>.
\end{align*}
    
\end{proposition}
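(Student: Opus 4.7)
The plan is to prove the identity by induction on the total length $n = \|\word{w}\| + \|\word{v}\|$, mirroring the inductive definition of the shuffle product.

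For the base case $n = 0$, both words are empty, so $\langle \emptyword, \widehat{\mathbb{X}}_t\rangle^2 = 1 = \langle \emptyword, \widehat{\mathbb{X}}_t\rangle$ since $\emptyword \shuffle \emptyword = \emptyword$. More generally, whenever one of the words is $\emptyword$, say $\word{v} = \emptyword$, the definition gives $\word{w} \shuffle \emptyword = \word{w}$ and $\langle \emptyword, \widehat{\mathbb{X}}_t\rangle = 1$, so the identity holds trivially; this covers all cases with $\min(\|\word{w}\|,\|\word{v}\|) = 0$.

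For the inductive step, assume the identity holds for every pair of words with total length strictly less than $n \ge 2$, and take $\word{w}, \word{v}$ nonempty with $\|\word{w}\| + \|\word{v}\| = n$. Write $\word{w} = \word{w}'\word{i}$ and $\word{v} = \word{v}'\word{j}$ with $\word{i},\word{j} \in \mathcal{A}$. Using the recursive integral definition $\widehat{\mathbb{X}}_t^{\word{w}'\word{i}} = \int_0^t \widehat{\mathbb{X}}_s^{\word{w}'}\,d\widehat{X}_s^{\word{i}}$ and the bounded-variation integration by parts formula, I would write
\begin{align*}
\langle \word{w}, \widehat{\mathbb{X}}_t\rangle \langle \word{v}, \widehat{\mathbb{X}}_t\rangle
= \int_0^t \langle \word{v}, \widehat{\mathbb{X}}_s\rangle\,\widehat{\mathbb{X}}_s^{\word{w}'}\,d\widehat{X}_s^{\word{i}}
+ \int_0^t \langle \word{w}, \widehat{\mathbb{X}}_s\rangle\,\widehat{\mathbb{X}}_s^{\word{v}'}\,d\widehat{X}_s^{\word{j}}.
\end{align*}
Since $\|\word{w}'\| + \|\word{v}\| = n-1$ and $\|\word{w}\| + \|\word{v}'\| = n-1$, the inductive hypothesis applied pointwise in $s$ transforms the two integrands into $\langle \word{w}' \shuffle \word{v}, \widehat{\mathbb{X}}_s\rangle$ and $\langle \word{w} \shuffle \word{v}', \widehat{\mathbb{X}}_s\rangle$ respectively. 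By linearity of the dual bracket and of the iterated-integral construction, the right-hand side equals
\begin{align*}
\bigl\langle (\word{w}' \shuffle \word{v})\word{i} + (\word{w} \shuffle \word{v}')\word{j},\, \widehat{\mathbb{X}}_t\bigr\rangle,
\end{align*}
which by Definition \ref{def shuffle product} is exactly $\langle \word{w} \shuffle \word{v},\widehat{\mathbb{X}}_t\rangle$, closing the induction.

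The only real subtlety is the integration-by-parts step, but since we are assuming $X$ (hence $\widehat{X}$) is such that the signature is well defined in the pathwise sense, $X$ is of bounded variation and the classical Stieltjes product rule $d(fg) = f\,dg + g\,df$ applies directly to the pair $f = \widehat{\mathbb{X}}^{\word{w}}$, $g = \widehat{\mathbb{X}}^{\word{v}}$. Extending the identity to arbitrary elements of $\mathrm{Span}(\mathcal{W}_{<\infty})$ is then a one-line bilinearity argument, but is not needed for the statement as written. (In the semimartingale setting briefly mentioned in the introduction, the same argument works provided the integration-by-parts formula is understood in the Stratonovich sense, whose product rule has no quadratic-variation correction.)
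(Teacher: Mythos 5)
Your proof is correct and is the standard inductive argument (induction on $\|\word{w}\|+\|\word{v}\|$, combining Stieltjes integration by parts with the recursive definition of the shuffle). The paper itself does not prove this proposition: it states it as a known result and only illustrates the degree-one case $\mathbb{X}_T^{\word{1}}\mathbb{X}_T^{\word{2}}=\mathbb{X}_T^{\word{12}}+\mathbb{X}_T^{\word{21}}$ via integration by parts, of which your induction is exactly the natural generalization. Your handling of the two subtleties — that the product vanishes at $t=0$ so no boundary term appears, and that in the semimartingale setting the Stratonovich product rule carries no quadratic-variation correction — matches the conventions the paper adopts, so nothing is missing.
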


We denote the word $\overbrace{\word{0} \word{0} \cdots \word{0}}^{k}$ by $\word{0}_k$ for $k \in \mathbb{N}$. Let $\widehat{\mathbb{X}}_{T}$ be the terminal value of the time-augmented signature of some path $X$. Since $\left< \word{0}_{k}, \widehat{\mathbb{X}}_T\right> = \widehat{\mathbb{X}}_{T}^{\word{0}_{k}} = \int_{0}^{T} \int_{0}^{t_1} \cdots \int_{0}^{t_{k-1}} dt_{k} \cdots dt_1 = \frac{T^k}{k!}>0$ and $\widehat{\mathbb{X}}_{T}^{\word{w}} = \left<\word{w},\widehat{\mathbb{X}}_{T}\right>$, we obtain $\widehat{\mathbb{X}}_{T}^{\word{w}} = \frac{k!}{T^k} \widehat{\mathbb{X}}_{T}^{\word{w}} \left<\word{0}_{k},\widehat{\mathbb{X}}_{T}\right>  = \frac{k!}{T^k} \left<\word{w} ,\widehat{\mathbb{X}}_{T}\right> \left<\word{0}_{k},\widehat{\mathbb{X}}_{T}\right>$. Thanks to the shuffle product property we get

\begin{align}
\label{egalité bracket}
\left<\word{w}, \widehat{\mathbb{X}}_{T}\right> =  \left< \frac{k!}{T^k}(\word{w} \shuffle \word{0}_{k}),\widehat{\mathbb{X}}_{T}\right>, \forall k \in \mathbb{N}.
\end{align}

Let $(\Omega,\mathcal{F},\mathbb{P})$ be a probability space on which we define a path valued random variable $X : \Omega \rightarrow (\mathbb{R}^d)^{[0,T]}$ such that its time-augmented signature is well defined. In the bounded variation framework or in the càdlàg semimartingale framework, the terminal value of the (non truncated) time-augmented signature is measurable. For any given subset of words $B \subset \mathcal{W}_{< \infty}$, we define $\text{Span}(\widehat{\mathbb{X}}_{T}^{\word{w}} ~~|~~ \word{w} \in B) := \left\{ \left<\ell,\widehat{\mathbb{X}}_{T}\right> ~|~ \ell \in \text{Span}(B)\right\}$ the quotient space for the almost-sure equality equivalence relation. An immediate consequence of Equation \eqref{egalité bracket} is:

\begin{align*}
    \text{Span}(\widehat{\mathbb{X}}_{T}^{\word{w}} ~|~ \word{w} \in \mathcal{W}_{\le N}) = \text{Span}(\widehat{\mathbb{X}}_{T}^{\word{w}} ~|~ \word{w} \in \mathcal{W}_{N}),
\end{align*}
and it reveals an upper bound for the dimension of $\text{Span}(\widehat{\mathbb{X}}_{T}^{\word{w}} ~|~\word{w} \in \mathcal{W}_{\le N})$: 

\begin{align*}
\dim(\text{Span}(\widehat{\mathbb{X}}_{T}^{\word{w}} ~|~\word{w} \in \mathcal{W}_{\le N})) \le \text{Card}(\mathcal{W}_{N}) = (d+1)^N.
\end{align*}
We can now define an appropriate notion of basis of words. 

\begin{definition}[Basis of words for $\mathcal{W}_N$]
\label{basis of words}
Let $B \subset \mathcal{W}_{\le N}$ be a subset of words. We say that $B$ is a basis of words for $\mathcal{W}_{N}$ if $(\word{w} \shuffle \word{0}_{N - \|\word{w}\|})_{\word{w} \in B}$ is a basis of $\text{Span}(\mathcal{W}_{N})$.
\end{definition}

The motivations are twofold:

\begin{enumerate}
    \item If $B$ is a basis of words for $\mathcal{W}_{N}$, then the family of random variables $(\widehat{\mathbb{X}}_{T}^{\word{w}})_{\word{w} \in B}$ has the same linear span as $(\widehat{\mathbb{X}}_{T}^{\word{w}})_{\word{w} \in \mathcal{W}_{N}}$. Hence adding another feature from the $N$-step truncated signature will bring the family to be linearly dependent.
    \item If $B$ is a basis of words for $\mathcal{W}_{N}$ and if the family of random variables $(\widehat{\mathbb{X}}_{T}^{\word{w}})_{\word{w} \in \mathcal{W}_{N}}$ is linearly independent for the $\mathbb{P}$-almost sure equality then it naturally follows that $(\widehat{\mathbb{X}}_{T}^{\word{w}})_{\word{w} \in B}$ is a basis of $\text{Span}(\widehat{\mathbb{X}}_{T}^{\word{w}} ~|~\word{w} \in \mathcal{W}_{\le N})$.
\end{enumerate}

We will indeed exhibit cases when the family of random variables $(\widehat{\mathbb{X}}_{T}^{\word{w}})_{\word{w} \in \mathcal{W}_{N}}$ is linearly independent and it follows that $\dim(\text{Span}(\widehat{\mathbb{X}}_{T}^{\word{w}} ~|~\word{w} \in \mathcal{W}_{\le N})) = (d+1)^N$.
\newline

\subsection*{Contributions.} In this work, we introduce the notion of basis of words for $\mathcal{W}_N$ which are subsets of $\mathcal{W}_{\le N}$ such that the components of the terminal value of the time-augmented signature associated with these subsets have the same linear span as the whole family of the terminal value of the $N$-step truncated time-augmented signature components, regardless of the law of the underlying stochastic process. Shuffling a word $\word{w}$ with $\word{0}_{N - \|\word{w}\|}$ does not produce any letters other than $\word{0}$ and those originally present in $\word{w}$ with their order preserved. we obtain that a basis of words for $\mathcal{W}_N$ can be decomposed into bases of words for $\mathcal{W}_{N}(\word{\gamma})$ for $\word{\gamma}$ in the set of words with no letter $\word{0}$, and $\mathcal{W}_{N}(\word{\gamma})$ denotes the set of words with length $N$ containing $\word{0}$ and the ordered letters of $\word{\gamma}$ (see Theorem \ref{main result -1}). We give necessary and sufficient conditions for a subset of words of length not greater than $N$ to be a basis of words for $\mathcal{W}_N(\word{\gamma})$ in Theorem \ref{main result 0} and Theorem \ref{main result 1}. In particular, we recover the result from \citep{dupire2022functional}[Theorem 3.9] which states that the subset of words of length not greater than $N$ which do not end by the letter $\word{0}$ is a basis of words for $\mathcal{W}_N$. This particular choice leads to the minimal computation time using Chen's relation in a backward way (see Theorem \ref{main result 4}). The same optimal computation time is symmetrically achieved in a forward way for the subset of words of length not greater than $N$ which do not start by the letter $\word{0}$. We show that if the stochastic process is the solution to a SDE with an uniformly elliptic diffusion coefficient, then all the subfamilies of signature components associated with bases of words for $\mathcal{W}_{N}$ also are linearly independent and thus form bases of the linear span of the $N$-step truncated time-augmented signature. We show that for any given truncation order $N \in \mathbb{N}$, the linear independence of these subfamilies is preserved when we consider an affine interpolation of the solution to this SDE for a sufficiently small time step. We remark that the notion of basis of words for $\mathcal{W}_N$ is also relevant for the (time-augmented) EFM signature introduced in \citep{abi2025exponentially}. Indeed, for all $N \in \mathbb{N}$, the family of components of the time-augmented EFM signature -- which are stochastic processes -- associated with a basis of words for $\mathcal{W}_N$ has the same linear span as the family of components associated with words of length not greater than $N$. Finally, when considering a Brownian semimartingale, we show that under non-degeneracy of its diffusion term, the components associated with a basis of words for $\mathcal{W}_N$ of its time-augmented EFM signature are almost-surely linearly independent for the $dt$-a.e equality.

\subsection*{Outline.} The main theoretical results are presented in Section \ref{part1}. Numerical results are presented in Section \ref{part2}. Technical lemmas and the proofs of the main results are postponed to Section \ref{part3}.

\section{Main results}
\label{part1}

\subsection{Bases of words and linear independence of signature components}

Shuffling a word $\word{w}$ such that $\|\word{w}\| \le N$ with $\word{0}_{N - \|\word{w}\|}$ does not produce any letters other than $\word{0}$ and those that were originally present in the word ($\word{0}$ included). Moreover, it does not change the relative order of appearance of letters different from $\word{0}$. This is why the word which consists in these ordered letters plays an important role. 

\begin{definition}[Pure word]
Let $P : \mathcal{W}_{< \infty} \rightarrow \mathcal{W}_{< \infty}$ as the operator that removes the $\word{0}$'s of a word, defined recursively as:

\begin{align*}
P(\emptyword) = \emptyword ~~\text{and}~~ P(\word{wi}) = 
\begin{cases}
      P(\word{w})\word{i} ~~\text{if}~~ \word{i} \neq \word{0} \\
      P(\word{w}) ~~\text{if}~~ \word{i} = \word{0}
\end{cases}.
\end{align*}
We define $\mathcal{PW}_{0} =\{\emptyword\}$, $\mathcal{PW}_{\le N} = P(\mathcal{W}_{\le N})$ for $N \in \mathbb{N}$ and $\mathcal{PW}_{< \infty} =P(\mathcal{W}_{< \infty})$.
\end{definition}

\begin{definition}
For $\word{\gamma} \in \mathcal{PW}_{< \infty}$ we define $\mathcal{W}_{< \infty}(\word{\gamma})=P^{-1}(\{\word{\gamma}\})$. For $N \ge \| \word{\gamma}\|$ we define $\mathcal{W}_{N}(\word{\gamma}) = \mathcal{W}_{< \infty}(\word{\gamma}) \cap \mathcal{W}_{N}$ and $\mathcal{W}_{\le N}(\word{\gamma}) = \bigsqcup_{k=\| \word{\gamma} \|}^{N} \mathcal{W}_{k}(\word{\gamma}) = \mathcal{W}_{< \infty}(\word{\gamma}) \cap \mathcal{W}_{\le N}$.

\end{definition}

Similarly to Definition \ref{basis of words} , we introduce the notion of basis of words for $\mathcal{W}_{N}(\word{\gamma})$ when $\word{\gamma}$ is a pure word.

\begin{definition}[Basis of words for $\mathcal{W}_N(\word{\gamma})$]
Let $N \in \mathbb{N}$ be a truncation order and $\word{\gamma} \in \mathcal{PW}_{\le N}$. We say that a subset $B \subset \mathcal{W}_{\le N}(\word{\gamma})$ is a basis of words for $\mathcal{W}_N(\word{\gamma})$ if $(\word{w} \shuffle \word{0}_{N - \| \word{w}\|})_{\word{w} \in B}$ is a basis of $\text{Span}(\mathcal{W}_{N}(\word{\gamma}))$.
\end{definition}

For each $\word{\gamma} \in \mathcal{PW}_{\le N}$ and for each $\word{w} \in \mathcal{W}_{\le N}(\word{\gamma})$, the shuffle $\word{w} \shuffle \word{0}_{N - \|\word{w}\|}$ belongs to $\text{Span}(\mathcal{W}_N(\word{\gamma}))$. Moreover, for distinct pure words $\word{\gamma} \neq \tilde{\word{\gamma}}$, the sets $\mathcal{W}_N(\word{\gamma})$ and $\mathcal{W}_N(\tilde{\word{\gamma}})$ are disjoint. As motivated earlier, this allows us to reduce the study of the spanning properties of the $N$-step truncated signature to each class of words sharing the same pure word.

\begin{theorem}
\label{main result -1}
Let $N \in \mathbb{N}$ be a truncation order and $B \subset \mathcal{W}_{\le N}$ a subset of words. The set $B$ is a basis of words for $\mathcal{W}_{N}$ if and only if for each $\word{\gamma} \in \mathcal{PW}_{\le N}$, $B \cap \mathcal{W}_{\le N}(\word{\gamma})$ is a basis of words for $\mathcal{W}_{N}(\word{\gamma})$.
\end{theorem}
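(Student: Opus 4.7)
The plan is to exploit the disjoint decomposition $\mathcal{W}_N = \bigsqcup_{\word{\gamma} \in \mathcal{PW}_{\le N}} \mathcal{W}_N(\word{\gamma})$, which induces the internal direct sum decomposition
\[
\text{Span}(\mathcal{W}_N) = \bigoplus_{\word{\gamma} \in \mathcal{PW}_{\le N}} \text{Span}(\mathcal{W}_N(\word{\gamma})).
\]
Analogously, $B$ itself decomposes as the disjoint union of $B \cap \mathcal{W}_{\le N}(\word{\gamma})$ for $\word{\gamma}$ ranging over $\mathcal{PW}_{\le N}$, since every word $\word{w} \in \mathcal{W}_{\le N}$ has a well-defined pure image $P(\word{w})$ of length at most $N$.

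The crucial stability property I would establish first is that, for every $\word{w} \in \mathcal{W}_{\le N}(\word{\gamma})$, the element $\word{w} \shuffle \word{0}_{N - \|\word{w}\|}$ lies in $\text{Span}(\mathcal{W}_N(\word{\gamma}))$. Unfolding the inductive definition of the shuffle product, every word appearing in the support of $\word{w} \shuffle \word{0}_k$ is obtained by interleaving $k$ letters $\word{0}$ into the word $\word{w}$; applying $P$ removes all $\word{0}$'s while preserving the order of the non-$\word{0}$ letters, so the image equals $P(\word{w}) = \word{\gamma}$. A short induction on $k$ formalizes this claim.

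With these two ingredients the family $(\word{w} \shuffle \word{0}_{N - \|\word{w}\|})_{\word{w} \in B}$ is the disjoint union of subfamilies, each indexed by $\word{\gamma} \in \mathcal{PW}_{\le N}$ and entirely contained in the summand $\text{Span}(\mathcal{W}_N(\word{\gamma}))$. The theorem then follows from the classical fact that, given an internal direct sum $V = \bigoplus_{\word{\gamma}} V_{\word{\gamma}}$ and a family partitioned as $\bigsqcup_{\word{\gamma}} \mathcal{B}_{\word{\gamma}}$ with $\mathcal{B}_{\word{\gamma}} \subset V_{\word{\gamma}}$, the whole family is a basis of $V$ if and only if each $\mathcal{B}_{\word{\gamma}}$ is a basis of $V_{\word{\gamma}}$; the ``if'' direction is immediate by concatenating bases, while the ``only if'' direction follows from the uniqueness of the direct sum decomposition (any element of $V_{\word{\gamma}}$ admits a unique expansion on the whole family, and its components along $V_{\tilde{\word{\gamma}}}$ for $\tilde{\word{\gamma}} \neq \word{\gamma}$ must all vanish, producing a unique expansion on $\mathcal{B}_{\word{\gamma}}$). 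No step here should be delicate; the only genuine content is the stability lemma for the shuffle with $\word{0}$'s, after which the theorem reduces to routine linear algebra on the direct sum indexed by pure words.
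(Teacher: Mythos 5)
Your proof is correct and follows essentially the same route as the paper: decompose $\text{Span}(\mathcal{W}_N)$ as a direct sum over pure words, observe that shuffling with $\word{0}$'s preserves the pure word so that each subfamily $(\word{w} \shuffle \word{0}_{N-\|\word{w}\|})_{\word{w} \in B \cap \mathcal{W}_{\le N}(\word{\gamma})}$ lands in its own summand $\text{Span}(\mathcal{W}_N(\word{\gamma}))$, and conclude by linear algebra on the direct sum. If anything, your appeal to the full basis criterion for families partitioned along a direct sum is slightly more complete than the paper's argument, which phrases the conclusion only in terms of the spanning condition $\varphi(\text{Span}(B)) = \text{Span}(\mathcal{W}_N)$ for the map $\varphi : \word{w} \mapsto \word{w} \shuffle \word{0}_{N-\|\word{w}\|}$, leaving the linear-independence half implicit.
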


\begin{theorem}[Necessary condition]
\label{main result 0}

Let $N \in \mathbb{N}$ be a truncation order and $\word{\gamma} \in \mathcal{PW}_{\le N}$. If a subset $B_{\word{\gamma}}$ of $\mathcal{W}_{\le N}(\word{\gamma})$ is a basis of words for $\mathcal{W}_{N}(\word{\gamma})$ then 

\begin{align*}
\text{Card}(B_{\word{\gamma}}) = \binom{N}{\| \word{\gamma} \|} ~~\text{and}~~ \forall m \in \{\|\word{\gamma}\|,\cdots,N-1\},  \text{Card}(B_{\word{\gamma}} \cap \mathcal{W}_{\le m}) \le  \binom{m}{\| \word{\gamma} \|}.
\end{align*}

\end{theorem}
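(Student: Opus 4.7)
The plan is to exploit the natural factorization of the shuffle with $\word{0}_{N-\|\word{w}\|}$ through smaller spans. First I would observe that a word in $\mathcal{W}_{N}(\word{\gamma})$ is uniquely determined by the choice of $\|\word{\gamma}\|$ positions in $\{1,\dots,N\}$ at which to place the letters of $\word{\gamma}$ in order, the remaining positions being filled with $\word{0}$. Hence
\[
\dim\bigl(\text{Span}(\mathcal{W}_{N}(\word{\gamma}))\bigr) = \text{Card}(\mathcal{W}_{N}(\word{\gamma})) = \binom{N}{\|\word{\gamma}\|}.
\]
Since by hypothesis $(\word{w}\shuffle\word{0}_{N-\|\word{w}\|})_{\word{w}\in B_{\word{\gamma}}}$ is a basis of this space, it is in particular a linearly independent family of pairwise distinct vectors, so the map $\word{w}\mapsto \word{w}\shuffle\word{0}_{N-\|\word{w}\|}$ is injective on $B_{\word{\gamma}}$ and $\text{Card}(B_{\word{\gamma}}) = \binom{N}{\|\word{\gamma}\|}$.

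For the inequality, the key identity is that, for any word $\word{w}$ with $\|\word{w}\| = l$ and any $l \le m \le N$, the associativity and commutativity of the shuffle product together with the elementary relation $\word{0}_{a}\shuffle\word{0}_{b} = \binom{a+b}{a}\word{0}_{a+b}$ yield
\[
\word{w}\shuffle\word{0}_{N-l} \;=\; \frac{1}{\binom{N-l}{m-l}}\bigl(\word{w}\shuffle\word{0}_{m-l}\bigr)\shuffle\word{0}_{N-m}.
\]
Since $\word{w}\shuffle\word{0}_{m-l} \in \text{Span}(\mathcal{W}_{m}(\word{\gamma}))$ whenever $\word{w}\in \mathcal{W}_{l}(\word{\gamma})$, this shows that every element $\word{w}\shuffle\word{0}_{N-\|\word{w}\|}$ with $\word{w}\in \mathcal{W}_{\le m}(\word{\gamma})$ lies in the image of the linear map
\[
\Phi_{m}: \text{Span}(\mathcal{W}_{m}(\word{\gamma})) \longrightarrow \text{Span}(\mathcal{W}_{N}(\word{\gamma})), \qquad \xi \longmapsto \xi\shuffle\word{0}_{N-m},
\]
whose image has dimension at most $\dim(\text{Span}(\mathcal{W}_{m}(\word{\gamma}))) = \binom{m}{\|\word{\gamma}\|}$. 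The subfamily $(\word{w}\shuffle\word{0}_{N-\|\word{w}\|})_{\word{w}\in B_{\word{\gamma}}\cap \mathcal{W}_{\le m}}$ is linearly independent (being extracted from a basis) and contained in $\text{Im}(\Phi_{m})$, therefore $\text{Card}(B_{\word{\gamma}}\cap \mathcal{W}_{\le m}) \le \binom{m}{\|\word{\gamma}\|}$.

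The only mildly delicate step is spotting the factorization through $\Phi_{m}$, namely the observation that adding $N-\|\word{w}\|$ zeros by shuffle can be decomposed as ``first bring $\word{w}$ up to length $m$ by shuffling with $\word{0}_{m-\|\word{w}\|}$, then shuffle with $\word{0}_{N-m}$'', up to the combinatorial factor coming from $\word{0}_{m-l}\shuffle\word{0}_{N-m} = \binom{N-l}{m-l}\word{0}_{N-l}$; once this is available, both conclusions reduce to standard dimension counting.
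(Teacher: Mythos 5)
Your proof is correct and follows essentially the same route as the paper: the cardinality equality is the same dimension count on $\text{Span}(\mathcal{W}_{N}(\word{\gamma}))$, and your key identity $(\word{w}\shuffle\word{0}_{m-l})\shuffle\word{0}_{N-m}=\binom{N-l}{m-l}\,\word{w}\shuffle\word{0}_{N-l}$ is exactly the one the paper uses in its Lemma to transfer linear independence between levels $N$ and $m$. Your phrasing via the image of the map $\Phi_m:\xi\mapsto\xi\shuffle\word{0}_{N-m}$ is just the push-forward version of the paper's pull-back argument, and both reduce to bounding by $\dim(\text{Span}(\mathcal{W}_{m}(\word{\gamma})))=\binom{m}{\|\word{\gamma}\|}$.
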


\begin{remark}
\label{remarque contre exemple necessary condition}
There exist pure words $\word{\gamma} \in \mathcal{PW}_{\le N}$ and subsets of $\mathcal{W}_{\le N}$ which satisfy this condition but are not bases of words for $\mathcal{W}_N(\word{\gamma})$: let us take the pure word $\word{ii}$ with $\word{i} \in \{\word{1},\cdots,\word{d}\}$ and the subset $B_{\word{ii}} = \{\word{i 0 i}, \word{i i 0}, \word{0 i 0 i},\word{0 i i 0}, \word{i 0 0 i}, \word{i 0 i 0}\} \subset \mathcal{W}_{\le 4}(\word{ii})$ which satisfies the assumptions of Theorem \ref{main result 0}, but we have

\begin{align*}
\begin{pmatrix}
\word{i 0 i} \shuffle \word{0} \\
\word{i i 0} \shuffle \word{0} \\
\word{0 i 0 i} \\
\word{0 i i 0} \\
\word{i 0 0 i} \\
\word{i 0 i 0}
\end{pmatrix}&=\begin{pmatrix}
0 & 1 & 0 & 2 & 1 & 0 \\
0 & 0 & 1 & 0 & 1 & 2 \\
0 & 1 & 0 & 0 & 0 & 0 \\
0 & 0 & 1 & 0 & 0 & 0 \\
0 & 0 & 0 & 1 & 0 & 0 \\
0 & 0 & 0 & 0 & 1 & 0
\end{pmatrix}
\begin{pmatrix}
\word{0 0 i i} \\
\word{0 i 0 i} \\
\word{0 i i 0} \\
\word{i 0 0 i} \\
\word{i 0 i 0} \\
\word{i i 0 0}
\end{pmatrix}.
\end{align*}
Since the first column of the matrix vanishes, $B_{\word{ii}}$ is not a basis of words for $\mathcal{W}_{4}(\word{ii})$.

\end{remark}

Since for each $m \in \{0, \cdots, N\}$, we have $\text{Card}(B \cap \mathcal{W}_{\le m}) = \sum_{k=0}^{m}\sum_{\gamma \in \mathcal{PW}_{k}} \text{Card}(B \cap \mathcal{W}_{\le m}(\word{\gamma}))$ with $\text{Card}(\mathcal{PW}_{k})=d^k$, we obtain the following corollary.

\begin{corollary}
\label{cardinal bases}
Let $N \in \mathbb{N}$ be a truncation order, and $B \subset \mathcal{W}_{\le N}$ be a subset of words. If $B$ is a basis of words for $\mathcal{W}_N$ then for each $m \in \{0,\cdots,N-1\}$, $\text{Card}(B \cap \mathcal{W}_{\le m}) \le (d+1)^m$ and $\text{Card}(B)=(d+1)^N$.
\end{corollary}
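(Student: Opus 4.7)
The plan is essentially to combine the decomposition provided by Theorem \ref{main result -1} with the cardinality bounds of Theorem \ref{main result 0}, then conclude by the binomial theorem. The corollary follows from straightforward bookkeeping, so I expect no serious obstacle; the only point to be careful about is lining up the index sets properly.

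First, I would observe that every word $\word{w} \in \mathcal{W}_{\le N}$ has a unique pure word $P(\word{w}) \in \mathcal{PW}_{\le N}$ and that $\|P(\word{w})\| \le \|\word{w}\|$. Hence, for any $m \in \{0,\ldots,N\}$, one has the disjoint decomposition
\begin{align*}
B \cap \mathcal{W}_{\le m} = \bigsqcup_{k=0}^{m} \bigsqcup_{\word{\gamma} \in \mathcal{PW}_k} \bigl(B \cap \mathcal{W}_{\le m}(\word{\gamma})\bigr),
\end{align*}
where the index $k$ only runs up to $m$ because $\mathcal{W}_{\le m}(\word{\gamma}) = \emptyset$ as soon as $\|\word{\gamma}\| > m$. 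Using $\mathrm{Card}(\mathcal{PW}_k) = d^k$, this already gives the counting identity written just before the corollary.

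Next, since $B$ is a basis of words for $\mathcal{W}_N$, Theorem \ref{main result -1} tells us that for every $\word{\gamma} \in \mathcal{PW}_{\le N}$ the subset $B_{\word{\gamma}} := B \cap \mathcal{W}_{\le N}(\word{\gamma})$ is a basis of words for $\mathcal{W}_N(\word{\gamma})$. Theorem \ref{main result 0} then yields $\mathrm{Card}(B_{\word{\gamma}}) = \binom{N}{\|\word{\gamma}\|}$ and, for every $m \in \{\|\word{\gamma}\|,\ldots,N-1\}$, $\mathrm{Card}(B_{\word{\gamma}} \cap \mathcal{W}_{\le m}) \le \binom{m}{\|\word{\gamma}\|}$ (note that $B_{\word{\gamma}} \cap \mathcal{W}_{\le m} = B \cap \mathcal{W}_{\le m}(\word{\gamma})$ because $\mathcal{W}_{\le m}(\word{\gamma}) \subset \mathcal{W}_{\le N}(\word{\gamma})$).

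Plugging these bounds into the decomposition above, I would conclude that for every $m \in \{0,\ldots,N-1\}$,
\begin{align*}
\mathrm{Card}(B \cap \mathcal{W}_{\le m}) = \sum_{k=0}^{m} \sum_{\word{\gamma} \in \mathcal{PW}_k} \mathrm{Card}(B \cap \mathcal{W}_{\le m}(\word{\gamma})) \le \sum_{k=0}^{m} d^k \binom{m}{k} = (d+1)^m
\end{align*}
by the binomial theorem, and similarly for $m = N$,
\begin{align*}
\mathrm{Card}(B) = \sum_{k=0}^{N} \sum_{\word{\gamma} \in \mathcal{PW}_k} \mathrm{Card}(B_{\word{\gamma}}) = \sum_{k=0}^{N} d^k \binom{N}{k} = (d+1)^N,
\end{align*}
which completes the proof.
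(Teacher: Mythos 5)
Your proof is correct and follows exactly the route the paper intends: decompose $B \cap \mathcal{W}_{\le m}$ over pure words via Theorem \ref{main result -1}, apply the cardinality bounds of Theorem \ref{main result 0} to each $B \cap \mathcal{W}_{\le m}(\word{\gamma})$, and sum using $\text{Card}(\mathcal{PW}_k)=d^k$ and the binomial theorem. The paper only sketches this in the sentence preceding the corollary, and your write-up fills in the same bookkeeping carefully.
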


We now define the sets of words of length $N \in \mathbb{N}$ which do not end by the letter $\word{0}$ (called prefixes) and the sets of words of length $N \in \mathbb{N}$ which do not start by the letter $\word{0}$ (called suffixes). The prefixes (resp. suffixes) which share the same pure word $\word{\gamma}$ are called $\word{\gamma}$-prefixes (resp. $\word{\gamma}$-suffixes).

\begin{definition}[Prefixes]
We define $\mathcal{W}_{0}^{*}=\{ \emptyword \}.$ and $\mathcal{W}_{N}^{*} =\{ \word{\word{i_1}} \cdots \word{\word{i_N}} \in \mathcal{W}_N ~|~ \word{\word{i_N}} \neq \word{0} \}$ for $N \in \mathbb{N}^*$. We also define $\mathcal{W}_{\le N}^{*} = \bigsqcup_{k=0}^{N} \mathcal{W}_{k}^{*}$ for $N \in \mathbb{N}$ and $\mathcal{W}_{< \infty}^{*} = \bigsqcup_{k=0}^{\infty} \mathcal{W}_{k}^{*}$.
\end{definition}

\begin{definition}[$\word{\gamma}$-Prefixes]
For $\word{\gamma} \in \mathcal{PW}_{< \infty}$ and $N \ge \| \word{\gamma}\|$ we define $\mathcal{W}_{N}^{*}(\word{\gamma}) = \mathcal{W}_{N}(\word{\gamma}) \cap \mathcal{W}_{< \infty}^{*}$ and $\mathcal{W}_{\le N}^{*}(\word{\gamma})= \bigsqcup_{k=\|\word{\gamma} \|}^{N}\mathcal{W}_{k}^{*}(\word{\gamma})=\mathcal{W}_{\le N}(\word{\gamma}) \cap \mathcal{W}_{< \infty}^{*}$.
\end{definition}

\begin{definition}[Suffixes]
We define ${}^{*}\mathcal{W}_{0}=\{ \emptyword \}.$ and ${}^{*}\mathcal{W}_{N} =\{ \word{\word{i_1}} \cdots \word{\word{i_N}} \in \mathcal{W}_N ~|~ \word{\word{i_1}} \neq \word{0} \}$ for $N \in \mathbb{N}^*$. We also define ${}^{*}\mathcal{W}_{\le N} = \bigsqcup_{k=0}^{N} {}^{*}\mathcal{W}_{k}$ for $N \in \mathbb{N}$ and ${}^{*}\mathcal{W}_{< \infty} = \bigsqcup_{k=0}^{\infty} {}^{*}\mathcal{W}_{k}$.
\end{definition}

\begin{definition}[$\word{\gamma}$-Suffixes]
For $\word{\gamma} \in \mathcal{PW}_{< \infty}$ and $N \ge \| \word{\gamma}\|$ we define ${}^{*}\mathcal{W}_{N}(\word{\gamma}) = \mathcal{W}_{N}(\word{\gamma}) \cap {}^{*}\mathcal{W}_{< \infty}$ and ${}^{*}\mathcal{W}_{\le N}(\word{\gamma})= \bigsqcup_{k=\|\word{\gamma} \|}^{N} {}^{*}\mathcal{W}_{k}(\word{\gamma})=\mathcal{W}_{\le N}(\word{\gamma}) \cap {}^{*}\mathcal{W}_{< \infty}$.
\end{definition}
 
\begin{theorem}[Sufficient conditions]
\label{main result 1}

Let $N \in \mathbb{N}$ be a truncation order and $\word{\gamma} \in \mathcal{PW}_{\le N}$. We have the following

\begin{enumerate}[label=(\roman*)]
    \item For each family of non-negative integers $(m_{\word{w}})_{\word{w} \in \mathcal{W}_{\le N}^{*}(\word{\gamma})}$ such that for each $\word{\gamma}$-prefix $\word{w} \in \mathcal{W}_{\le N}^{*}(\word{\gamma})$, $\|\word{w}\| + m_{\word{w}} \le N$, the family $\{ \word{w} \word{0}_{m_{\word{w}}} ~|~ \word{w} \in \mathcal{W}_{\le N}^{*}(\word{\gamma}) \}$ is a basis of words for $\mathcal{W}_{N}(\word{\gamma})$.
    \item For each family of non-negative integers $(m_{\word{w}})_{\word{w} \in {}^{*}\mathcal{W}_{\le N}(\word{\gamma})}$ such that for each $\word{\gamma}$-suffix $\word{w} \in {}^{*}\mathcal{W}_{\le N}(\word{\gamma})$, $\|\word{w}\| + m_{\word{w}} \le N$, the family $\{ \word{0}_{m_{\word{w}}}\word{w} ~|~ \word{w} \in {}^{*}\mathcal{W}_{\le N}(\word{\gamma})\}$ is a basis of words for $\mathcal{W}_{N}(\word{\gamma})$.

\end{enumerate}

\end{theorem}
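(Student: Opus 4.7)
The plan is to prove (i) directly by showing that the $\binom{N}{\|\word{\gamma}\|} \times \binom{N}{\|\word{\gamma}\|}$ matrix expressing the candidate family in the canonical basis $\mathcal{W}_N(\word{\gamma})$ of $\text{Span}(\mathcal{W}_N(\word{\gamma}))$ is upper triangular with non-zero diagonal under a natural partial order on $\mathcal{W}_{\le N}^*(\word{\gamma})$, and then to deduce (ii) from (i) by a word-reversal argument. Writing $k = \|\word{\gamma}\|$, the hockey-stick identity $\sum_{j=k}^{N}\binom{j-1}{k-1} = \binom{N}{k}$ gives $|\mathcal{W}_{\le N}^*(\word{\gamma})| = \binom{N}{k} = |\mathcal{W}_N(\word{\gamma})|$, so linear independence of the family in $\text{Span}(\mathcal{W}_N(\word{\gamma}))$ is enough to conclude it is a basis.

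The key setup is to parameterize each word in $\mathcal{W}_{<\infty}(\word{\gamma})$ by the increasing positions $p_1 < \cdots < p_k$ of its non-zero letters (which, read in order, spell $\word{\gamma}$); the prefix condition $\word{w} \in \mathcal{W}_{\le N}^*(\word{\gamma})$ becomes simply $p_k = \|\word{w}\|$, and there is a natural bijection $\mathcal{W}_{\le N}^*(\word{\gamma}) \to \mathcal{W}_N(\word{\gamma})$, $\word{w}' \mapsto \word{w}'\word{0}_{N - \|\word{w}'\|}$. A straightforward induction (equivalently, a direct count of order-preserving interleavings of $\word{u}$ with $q$ indistinguishable zeros) gives the coefficient formula
\begin{align*}
\word{u} \shuffle \word{0}_q = \sum_{\word{v} \in \mathcal{W}_{r+q}(P(\word{u}))} \left(\prod_{l=0}^{k} \binom{q_{l+1} - q_l - 1}{p_{l+1} - p_l - 1}\right) \word{v},
\end{align*}
where $\word{u}$ has length $r$ with non-zero positions $p_1 < \cdots < p_k$, $\word{v}$ has non-zero positions $q_1 < \cdots < q_k$, and we use the conventions $p_0 = q_0 = 0$, $p_{k+1} = r+1$, $q_{k+1} = r+q+1$. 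Specializing to $\word{u} = \word{w}\word{0}_{m_\word{w}}$ (whose non-zero positions coincide with those of $\word{w}$) and $\word{v} = \word{w}'\word{0}_{N - \|\word{w}'\|}$ (whose non-zero positions coincide with those of $\word{w}'$) yields
\begin{align*}
M_{\word{w}, \word{w}'} = \binom{N - \|\word{w}'\|}{m_\word{w}} \prod_{l=0}^{k-1} \binom{p_{l+1}' - p_l' - 1}{p_{l+1} - p_l - 1}.
\end{align*}

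The crucial observation is that each inner factor vanishes unless $p_{l+1}' - p_l' \ge p_{l+1} - p_l$, and the $l = 0$ factor $\binom{p_1' - 1}{p_1 - 1}$ forces $p_1' \ge p_1$; a short induction then gives $p_l' \ge p_l$ for every $l$. Defining the partial order $\word{w} \preceq \word{w}'$ by coordinate-wise comparison of the position tuples, this shows $M_{\word{w}, \word{w}'} \neq 0 \Rightarrow \word{w} \preceq \word{w}'$. On the diagonal, $M_{\word{w}, \word{w}} = \binom{N - \|\word{w}\|}{m_\word{w}} > 0$ by the assumption $\|\word{w}\| + m_\word{w} \le N$. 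Extending $\preceq$ to any compatible total order makes $M$ upper triangular with non-zero diagonal, hence invertible, which proves (i). For (ii), I would invoke the reversal involution $\word{w} \mapsto \word{w}^R$ (reading letters in reverse) extended linearly to $\text{Span}(\mathcal{W}_{<\infty})$; by induction on total length and the symmetry of the shuffle recursion one checks $(\word{u} \shuffle \word{v})^R = \word{u}^R \shuffle \word{v}^R$, and reversal restricts to bijections ${}^*\mathcal{W}_{\le N}(\word{\gamma}) \leftrightarrow \mathcal{W}_{\le N}^*(\word{\gamma}^R)$ and $\mathcal{W}_N(\word{\gamma}) \leftrightarrow \mathcal{W}_N(\word{\gamma}^R)$, so applying (i) to $\word{\gamma}^R$ yields (ii). The main technical ingredient is the coefficient formula; once it is in hand, the vanishing-binomial observation makes the triangularity essentially immediate.
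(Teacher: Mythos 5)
Your proof is correct, but it takes a genuinely different route from the paper's. The paper proves a structural lemma by induction on the shuffle recursion: for a prefix $\word{w}$, the expansion of $\word{w}\word{0}_{k_1}\shuffle\word{0}_{k_2}$ consists of the term $\binom{\tilde N}{k_1}\word{w}\word{0}_{\tilde N}$ plus correction terms of the form $\word{v}\word{0}_{\cdot}$ where $\word{v}$ is a \emph{strictly longer} prefix, and then sorts $\mathcal{W}^*_{\le N}(\word{\gamma})$ lexicographically on $(\|\word{v}\|,\word{v})$ to obtain an upper triangular change-of-basis matrix with diagonal $\binom{N-\|\word{w}\|}{m_{\word{w}}}$. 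You reach the same triangular structure, but via an explicit closed-form product-of-binomials formula for \emph{all} shuffle coefficients in terms of the positions of the non-zero letters, and you read off triangularity from the vanishing of binomials with respect to the coordinatewise partial order on position tuples (which refines the paper's ordering by length). Your approach is computationally heavier to set up but yields strictly more information — the exact coefficients, not just their support — and your treatment of (ii) via the reversal anti-automorphism $(\word{u}\shuffle\word{v})^R=\word{u}^R\shuffle\word{v}^R$ is cleaner than the paper's, which re-runs the entire induction with the mirror form of the shuffle recursion. Two small points worth making explicit if you write this up: the cardinality count $\lvert\mathcal{W}^*_{\le N}(\word{\gamma})\rvert=\binom{N}{\|\word{\gamma}\|}$ and the injectivity of $\word{w}\mapsto\word{w}\word{0}_{m_{\word{w}}}$ (strip trailing zeros) are both needed to pass from invertibility of $M$ to the basis-of-words conclusion, and the degenerate case $\word{\gamma}=\emptyword$ should be checked separately since then no word of the family ends in a non-zero letter.
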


\begin{remark}
To construct a basis of words for $\mathcal{W}_N$, Theorem \ref{main result -1} allows us to freely choose a basis of words for each pure word.
Theorem \ref{main result 1} then provides a systematic way to construct such bases for all pure words. Note that one can choose the construction (i) based on prefixes for some pure words and (ii) based on suffixes for the remaining ones.  
\end{remark}

\begin{remark}
\label{remark contre exemple}
Let $\word{i} \in \{ \word{1}, \cdots, \word{d}\}$. The set $\{\word{i},\word{0i},\word{0i0}\}$ is a basis of words for $\mathcal{W}_{3}(\word{i})$ even if it does not satisfy the sufficient conditions given in the Theorem \ref{main result 1}. This is a consequence of the following more general statement: 

\begin{enumerate}
\item[(i)] Each $B_{\word{i}} \subset \mathcal{W}_{\le N}(\word{i})$ which contains one element with each length $k \in \{1,\cdots,N\}$ is a basis of words for $\mathcal{W}_{N}(\word{i})$.
\end{enumerate}
The proof of this statement relies on an induction argument and on the following property: 

\begin{enumerate}
\item[(ii)] For each truncation order $N \in \mathbb{N}^*$ and each word $\word{\tilde{w}} \in \mathcal{W}_{N+1}(\word{i})$, the set $\mathcal{W}_{N}(\word{i}) \cup \{ \word{\tilde{w}}\}$ is a basis of words for $\mathcal{W}_{N+1}(\word{i})$.  
\end{enumerate}
\end{remark}

In the following we define the length of $B \subset \mathcal{W}_{< \infty}$ as $\length{B} = \sum_{\word{w} \in B} \| \word{w}\|$.

\begin{corollary}
\label{corrolary length}

For any truncation order $N \in \mathbb{N}$, the subsets $\mathcal{W}_{\le N}^{*}$ and ${}^{*}\mathcal{W}_{\le N}$ are bases of words for $\mathcal{W}_{N}$ which have the minimal length.

\end{corollary}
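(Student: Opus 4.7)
First I would check that $\mathcal{W}_{\le N}^{*}$ and ${}^{*}\mathcal{W}_{\le N}$ are bases of words for $\mathcal{W}_N$; this is an immediate consequence of the preceding theorems. Writing $\mathcal{W}_{\le N}^{*} = \bigsqcup_{\word{\gamma} \in \mathcal{PW}_{\le N}} \mathcal{W}_{\le N}^{*}(\word{\gamma})$ and applying Theorem \ref{main result 1}(i) with $m_{\word{w}} = 0$ for every $\word{w}$, each $\mathcal{W}_{\le N}^{*}(\word{\gamma})$ is a basis of words for $\mathcal{W}_N(\word{\gamma})$, and Theorem \ref{main result -1} then assembles these pieces into a basis of words for $\mathcal{W}_N$. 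The suffix case is handled symmetrically via Theorem \ref{main result 1}(ii).

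The substance of the corollary lies in the minimality statement, and the plan is to compare lengths pure word by pure word. Let $B \subset \mathcal{W}_{\le N}$ be an arbitrary basis of words for $\mathcal{W}_N$. By Theorem \ref{main result -1}, each $B_{\word{\gamma}} := B \cap \mathcal{W}_{\le N}(\word{\gamma})$ is a basis of words for $\mathcal{W}_N(\word{\gamma})$, and the length splits additively along the disjoint union indexed by $\mathcal{PW}_{\le N}$. It therefore suffices to establish, for every $\word{\gamma} \in \mathcal{PW}_{\le N}$, the inequality $\length{B_{\word{\gamma}}} \ge \length{\mathcal{W}_{\le N}^{*}(\word{\gamma})}$ (and the symmetric one against ${}^{*}\mathcal{W}_{\le N}(\word{\gamma})$).

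The main computation is this per-$\word{\gamma}$ bound. Writing $a = \|\word{\gamma}\|$ and $c_k = \text{Card}(B_{\word{\gamma}} \cap \mathcal{W}_k)$, Theorem \ref{main result 0} gives $\sum_{k=a}^{m} c_k \le \binom{m}{a}$ for $a \le m \le N-1$, with equality at $m = N$. A short summation by parts rewrites
\[
\length{B_{\word{\gamma}}} = \sum_{k=a}^{N} k\, c_k = N\binom{N}{a} - \sum_{m=a}^{N-1}\sum_{k=a}^{m} c_k \ge N\binom{N}{a} - \sum_{m=a}^{N-1}\binom{m}{a}.
\]
To see that $\mathcal{W}_{\le N}^{*}(\word{\gamma})$ attains this bound, I would count its profile directly: a $\word{\gamma}$-prefix of length $k$ is determined by fixing the last letter of $\word{\gamma}$ at position $k$ and distributing the remaining $a-1$ non-$\word{0}$ letters of $\word{\gamma}$ among the first $k-1$ positions, yielding $c_k = \binom{k-1}{a-1}$ when $a \ge 1$, and the hockey-stick identity $\sum_{k=a}^{m}\binom{k-1}{a-1} = \binom{m}{a}$ saturates every constraint above. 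The edge case $a = 0$ is trivial: $\mathcal{W}_{\le N}^{*}(\emptyword) = \{\emptyword\}$ has length $0$, which is clearly minimal. The suffix case is symmetric.

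I do not anticipate a substantive obstacle. The only point requiring care is to perform the Abel summation \emph{per pure word} rather than globally: the cardinality constraints from Theorem \ref{main result 0} are themselves per-$\word{\gamma}$, and aggregating them across all pure words before applying summation by parts would produce a strictly weaker bound. It is precisely the decomposition provided by Theorem \ref{main result -1} that makes the per-$\word{\gamma}$ reduction legitimate and the lower bound tight.
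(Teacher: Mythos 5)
Your proof is correct, but it takes a finer-grained route than the paper's. The paper argues globally: it establishes the single lower bound $\length{B} \ge N(d+1)^N - \sum_{m=0}^{N-1}(d+1)^m$ for any basis of words $B$ via the same Abel summation you use, namely $\length{B} = N\,\text{Card}(B) - \sum_{m=0}^{N-1}\text{Card}(B\cap\mathcal{W}_{\le m})$, combined with the aggregated cardinality constraints of Corollary \ref{cardinal bases}, and then verifies by a direct computation that $\length{\mathcal{W}_{\le N}^{*}} = \sum_{k=1}^{N} k\, d(d+1)^{k-1}$ attains this bound. Your per-pure-word version, using Theorem \ref{main result 0} together with the profile $c_k=\binom{k-1}{a-1}$ and the hockey-stick identity, is valid and proves the slightly stronger fact that $\mathcal{W}_{\le N}^{*}(\word{\gamma})$ minimizes the length within each class $\mathcal{W}_{\le N}(\word{\gamma})$. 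However, your closing caveat is mistaken: aggregating the constraints of Theorem \ref{main result 0} across pure words before summing by parts does \emph{not} produce a strictly weaker bound. Since $\sum_{a=0}^{m} d^{a}\binom{m}{a}=(d+1)^m$, the global constraint $\text{Card}(B\cap\mathcal{W}_{\le m})\le (d+1)^m$ is exactly the sum of the per-$\word{\gamma}$ constraints, and Abel summation is linear, so the global lower bound coincides with the sum over $\word{\gamma}$ of your per-$\word{\gamma}$ lower bounds --- this is precisely the computation the paper carries out, and it is tight. The per-$\word{\gamma}$ decomposition is therefore not needed for the corollary as stated, though it costs nothing and yields the refined per-class conclusion as a by-product.
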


Our fourth main result states that in the case where the path valued random variable is an Itô process solution to SDE with a uniformly elliptic diffusion coefficient, the family of random variables $(\widehat{\mathbb{X}}_{T}^{\word{w}})_{\word{w} \in B}$ is a basis of $\text{Span}(\widehat{\mathbb{X}}_{T}^{\word{w}} ~|~ \word{w} \in \mathcal{W}_{\le N})$ when $B$ a basis of words for $\mathcal{W}_{N}$.

\begin{theorem}
\label{main result 2}
Let $\left(\Omega,\mathcal{F},\mathbb{P}\right)$ be a probability space, $T >0$ be a fixed time horizon, $m,d \in \mathbb{N}^*$, $(\beta_t)_{t \in [0,T]}$ be a $m$-dimensional Brownian motion, and $Y$ be a $\mathbb{R}^d$-valued random vector independent of $(\beta_t)_{t \in [0,T]}$. Let $(X_t)_{t \in [0,T]}:=(X_{t}^{\word{1}},\cdots,X_{t}^{\word{d}})_{t \in [0,T]}$ be the solution to:

\begin{align}
\label{Ito form}
    dX_t &= \sigma(t,X_t) d\beta_t + b(t,X_t) dt, \quad X_0=Y,
\end{align}
where $b: [0,T] \times \mathbb{R}^{d} \rightarrow  \mathbb{R}^d$ is $\mathcal{C}^3$ and $\sigma : [0,T] \times \mathbb{R}^{d} \rightarrow \mathbb{R}^{d \times m}$ is $C^{4}$, both with linear growth. Let $N \in \mathbb{N}$ be a truncation order, and $B \subset \mathcal{W}_{\le N}$ be a basis of words for $\mathcal{W}_{N}$. Then, under the uniform ellipticity condition:
\[
\exists \lambda>0, \forall t \in [0,T], \forall x \in \mathbb{R}^d,\forall \xi \in \mathbb{R}^d  \quad \xi\sigma(t,x) \sigma(t,x)^{\top} \xi^{\top} \ge \lambda\|\xi\|^2,
\]
the family of random variables $(\widehat{\mathbb{X}}^{\word{w}}_{T})_{\word{w} \in B}$ is a basis of $\text{Span}(\widehat{\mathbb{X}}_{T}^{\word{w}} ~|~ \word{w} \in \mathcal{W}_{\le N})$. 
\end{theorem}

\begin{remark}
\label{remark base libre and existence and uniqueness of solution}
\begin{enumerate}[label=(\roman*)]
\item We know that for each truncation order $N \in \mathbb{N}$, $\text{dim}(\text{Span}(\widehat{\mathbb{X}}_{T}^{\word{w}} ~|~ \word{w} \in \mathcal{W}_{\le N})) \le (d+1)^N$. If we exhibit a particular basis of words $\tilde{B}$ for $\mathcal{W}_N$ (which necessarily has the cardinality $\text{Card}(\tilde{B})=(d+1)^N$ by Corollary \ref{cardinal bases}) and such that $(\widehat{\mathbb{X}}_{T}^{\word{w}})_{\word{w} \in \tilde{B}}$ is linearly independent for the almost-sure equality, then it immediately comes that $\text{dim}(\text{Span}(\widehat{\mathbb{X}}_{T}^{\word{w}} ~|~ \word{w} \in \mathcal{W}_{\le N})) = (d+1)^N$, and for any basis of words $B$ for $\mathcal{W}_N$, the family $(\widehat{\mathbb{X}}_{T}^{\word{w}})_{\word{w} \in B}$ is a basis of $\text{Span}(\widehat{\mathbb{X}}_{T}^{\word{w}} ~|~ \word{w} \in \mathcal{W}_{\le N})$. We show that $(\widehat{\mathbb{X}}_{T}^{\word{w}})_{\word{w} \in \mathcal{W}_{< \infty}^{*}}$ is a family of random variables linearly independent for the almost-sure equality.  This ensures that for each $N \in \mathbb{N}$ the family of random variables $(\widehat{\mathbb{X}}_{T}^{\word{w}})_{\word{w} \in \mathcal{W}_{\le N}^{*}}$ is linearly independent for the almost-sure equality.
\item When $\sigma \equiv 1$, the result still holds when $b$ is only measurable with linear growth in space uniformly in time. Indeed, in that case \eqref{Ito form} admits a weak solution \citep{karatzas2014brownian}[Proposition V.3.6], which is unique in law \citep{karatzas2014brownian}[Proposition V.3.10], and the measure $\mathbb P^*$ obtained from $\mathbb P$ by Girsanov's
transform, namely $\frac{d \mathbb{P}^*}{d \mathbb{P}}
  = \exp\left(-\int_{0}^{T} b(t,X_t) \cdot d\beta_t - \frac{1}{2} \int_{0}^{T}
  \|b(t,X_t) \|^2\, dt\right)$, is equivalent to $\mathbb P$ and removes the drift. More precisely, under $\mathbb{P}^*$, the process
$(X_t-Y)_{t \le T}$ is a Brownian motion independent of $Y$. Since the linear independence of the
family $(\widehat{\mathbb{X}}^{\word{w}}_{T})_{\word{w} \in \mathcal{W}_{< \infty}^{*}}$ is unaffected
by the change to the equivalent measure $\mathbb P^*$, and since the time-augmented signature is
invariant under translations of the path, the statement reduces to the case of a Brownian motion, which is the content of Proposition \ref{prop brownian}.
\end{enumerate}
\end{remark}

Our fifth main result states that the linear independence of the terminal value of the time-augmented signature components still holds when the considered process is the piecewise linear interpolation of the solution to the SDE \eqref{Ito form}, under assumptions of Theorem \ref{main result 2}. 
\begin{theorem}
\label{main result 3}
Let $T>0$, $(X_t)_{t \le T}$ be the solution to \eqref{Ito form}, $N \in \mathbb{N}^*$ be a truncation order, $B \subset \mathcal{W}_{\le N}$ be a basis of words for $\mathcal{W}_{N}$ and $(D_n = \{0=t_0 < \cdots < t_n=T \})_{n \in \mathbb{N}^*}$ be a sequence of dissections of $[0,T]$ such that $|D_n|:=\sup\limits_{0 \le k \le n-1 } |t_{k+1} - t_k| \rightarrow 0$. We denote by $X^{(n)}$ the piecewise affine interpolation of $X$ on $D_n$, and $\left(\widehat{\mathbb{X}^{(n)}}_{t}\right)_{t \le T}$ the signature of $(\widehat{X}^{(n)}_t)_{t \le T}:=(t,X^{(n)}_t)_{t \le T}$. Then under the assumptions of Theorem \ref{main result 2}, there exists $n_0 \in \mathbb{N}$ such that for all $n \ge n_0$, $\left(\widehat{\mathbb{X}^{(n)}}_{T}^{\word{w}}\right)_{\word{w} \in B}$ is a basis of $\text{Span}\left(\widehat{\mathbb{X}^{(n)}}_{T}^{\word{w}}~|~ \word{w} \in \mathcal{W}_{\le N}\right)$.

\end{theorem}

\begin{remark}
For a linear regression purpose, it is necessary to have the terminal value of the time-augmented signature components in $L^2$ for \eqref{loss} to be well defined. It is possible to show by induction on $N  \in \mathbb{N}$ that, if $Y \in L^{2N}$, then for each word $\word{w} \in \mathcal{W}_N$, we have $\|\widehat{\mathbb{X}}^{\word{w}}\|_{\infty} \in L^2$ and for all $n \in \mathbb{N}$, $\|\widehat{\mathbb{X}^{(n)}}^{\word{w}}\|_{\infty} \in L^2$.  
\end{remark}

\subsection{Optimal computation of signature components associated to a basis of words}

Our goal is to efficiently compute the terminal value of the truncated time-augmented signature of a piecewise affine path. Since an explicit formula exists for the signature of an affine path, we combine it with Chen’s relation when dealing with concatenated paths. For each word $\word{w} \in \mathcal{W}_{< \infty}$ we denote by $S^{\word{w}}$ the map that associates to a path of finite variation $(X_t)_{t \in [0,T]}$  the terminal value $\widehat{\mathbb{X}}_{T}^{\word{w}}$ of the corresponding component of its time-augmented signature.

\begin{proposition}[Time-augmented signature of an affine path]
\label{signature segment}

Let $T>0$, $X:=(X_{t}^{\word{1}},\cdots,X_{t}^{\word{d}})_{t \in [0,T]}$ be an affine path taking values in $\mathbb{R}^d$. Then for all $N \in \mathbb{N}^*$ and for all $\word{w}=\word{i_1 \cdots i_N} \in \mathcal{W}_N$ we have the following equality
\[
    S^{\word{w}}(X) = \frac{1}{N!} \prod_{k=1}^{N} \left(\widehat{X}_{T}^{\word{i_k}} - \widehat{X}_{0}^{\word{i_k}} \right), 
\]
where $\widehat{X}_t^{\word{0}}=t$ and $\widehat{X}_{t}^{\word{i}}=X_t^{\word{i}}$ for $\word{i} \in \{ \word{1}, \cdots, \word{d} \}$.
    
\end{proposition}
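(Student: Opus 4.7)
The plan is a direct calculation exploiting the fact that, since $X$ is affine in $\mathbb{R}^d$ and the time component $t \mapsto t$ is affine in $\mathbb{R}$, the time-augmented path $\widehat{X}$ is affine in $\mathbb{R}^{d+1}$. Therefore each coordinate $t \mapsto \widehat{X}^{\word{i}}_t$ is continuously differentiable with constant derivative
\begin{align*}
\dot{\widehat{X}}^{\word{i}}_t = \frac{\widehat{X}^{\word{i}}_T - \widehat{X}^{\word{i}}_0}{T}, \qquad \word{i} \in \mathcal{A} = \{\word{0},\word{1},\ldots,\word{d}\},
\end{align*}
so the iterated Riemann--Stieltjes integral defining $\widehat{\mathbb{X}}_T^{\word{w}}$ reduces to an iterated Lebesgue integral with a constant integrand.

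The key steps, in order, are as follows. First, I would substitute $d\widehat{X}^{\word{i_k}}_{t_k} = \dfrac{\widehat{X}^{\word{i_k}}_T - \widehat{X}^{\word{i_k}}_0}{T}\, dt_k$ into the definition of the signature component,
\begin{align*}
S^{\word{w}}(X) = \int_{0 < t_1 < \cdots < t_N < T} d\widehat{X}^{\word{i_1}}_{t_1} \cdots d\widehat{X}^{\word{i_N}}_{t_N}.
\end{align*}
Second, I would pull out the $N$ constants, leaving
\begin{align*}
S^{\word{w}}(X) = \frac{1}{T^N}\prod_{k=1}^{N}\bigl(\widehat{X}^{\word{i_k}}_T - \widehat{X}^{\word{i_k}}_0\bigr)\;\int_{0 < t_1 < \cdots < t_N < T} dt_1 \cdots dt_N.
\end{align*}
Third, I would invoke the standard formula for the volume of the simplex, $\int_{0 < t_1 < \cdots < t_N < T} dt_1 \cdots dt_N = T^N/N!$, which can itself be obtained by a trivial induction on $N$ (integrating $t_{k-1}^{k-1}/(k-1)!$ from $0$ to $t_k$ yields $t_k^k/k!$). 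After simplification of $T^N$ the claimed identity follows.

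There is essentially no serious obstacle here; the only subtlety worth flagging is the inclusion of the letter $\word{0}$ in the alphabet, so that one must check the formula applies uniformly to $\word{i}=\word{0}$ as well as to $\word{i}\in\{\word{1},\ldots,\word{d}\}$. This is immediate from the convention $\widehat{X}^{\word{0}}_t = t$, which gives $\widehat{X}^{\word{0}}_T - \widehat{X}^{\word{0}}_0 = T$, consistent with $d\widehat{X}^{\word{0}}_t = dt$. As an alternative one could proceed by induction on $N$ through Chen's relation applied to an infinitesimal concatenation (or via the shuffle identity $\widehat{X}^{\word{i_1}}\cdots \widehat{X}^{\word{i_N}}$ expanded into $N!$ permuted iterated integrals that all coincide for an affine path), but the direct computation above is the shortest and most transparent route.
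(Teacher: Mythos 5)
Your computation is correct: since the time-augmented path $\widehat{X}$ is affine, each $d\widehat{X}^{\word{i_k}}_{t_k}$ is a constant multiple of $dt_k$, and factoring out the constants reduces everything to the volume $T^N/N!$ of the simplex, which gives exactly the claimed identity (including the case $\word{i_k}=\word{0}$, as you check). The paper states Proposition \ref{signature segment} without proof, treating it as a standard fact, and your direct argument is precisely the standard one; nothing is missing.
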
 

We define the concatenation of two paths.

\begin{definition}[Concatenation of paths]

Let $T_1,T_2 >0$, and $X : [0,T_1] \rightarrow \mathbb{R}^d$, $Y : [0,T_2] \rightarrow \mathbb{R}^d$ be two paths. We denote by $X*Y : [0,T_1+T_2] \rightarrow \mathbb{R}^d$ the concatenation of $X$ and $Y$ defined as

\[
X*Y : t \mapsto \begin{cases} X_t ~~\text{if}~~ t \in [0,T_1] \\
    Y_{t-T_1} - Y_0 +X_{T_1} ~~\text{if}~~ t \in [T_1,T_1 + T_2]\end{cases}
\]

\end{definition}

\begin{proposition}[Chen's relation]
\label{chen relation}

Let $T_1,T_2 >0$, and $X : [0,T_1] \rightarrow \mathbb{R}^d$, $Y : [0,T_2] \rightarrow \mathbb{R}^d$ be two paths of finite variation. For any word $\word{i_1 \cdots i_m}$ we have the following relation

\[
S^{\word{i_1 \cdots i_m}}(X*Y) = \sum_{k=0}^{m} S^{\word{i_1 \cdots i_k}}(X) S^{\word{i_{k+1} \cdots i_m}}(Y),
\]
with the convention $\word{i_1 \cdots i_k} = \emptyword$ if $k=0$, and $\word{i_{k+1} \cdots i_m}=\emptyword$ if $k=m$.

\end{proposition}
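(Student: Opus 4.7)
The plan is to expand the definition of $S^{\word{i_1 \cdots i_m}}(X*Y)$ as the iterated integral of $\widehat{X*Y}$ over the simplex $\{0 < t_1 < \cdots < t_m < T_1 + T_2\}$ and decompose it according to the unique index $k \in \{0, \ldots, m\}$ such that $t_1, \ldots, t_k < T_1 < t_{k+1}, \ldots, t_m$. Since the hyperplanes $\{t_j = T_1\}$ have Lebesgue measure zero and the integrators are of finite variation, this partition of the simplex into $m+1$ disjoint product-simplices (up to a null set) yields
\begin{align*}
S^{\word{i_1 \cdots i_m}}(X*Y) = \sum_{k=0}^{m} \int_{\substack{0 < t_1 < \cdots < t_k < T_1 \\ T_1 < t_{k+1} < \cdots < t_m < T_1+T_2}} d\widehat{(X*Y)}_{t_1}^{\word{i_1}} \cdots d\widehat{(X*Y)}_{t_m}^{\word{i_m}}.
\end{align*}

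The next step is to identify each differential on each piece with a differential of $\widehat{X}$ or $\widehat{Y}$. By definition of concatenation, on $[0,T_1]$ one has $\widehat{(X*Y)}_t = \widehat{X}_t$, so $d\widehat{(X*Y)}_{t_j}^{\word{i_j}} = d\widehat{X}_{t_j}^{\word{i_j}}$ for $j \le k$. On $[T_1,T_1+T_2]$, the time component still satisfies $d\widehat{(X*Y)}_{t}^{\word{0}} = dt$, while for $\word{i} \in \{\word{1}, \ldots, \word{d}\}$ one has $\widehat{(X*Y)}_{t}^{\word{i}} = \widehat{Y}_{t-T_1}^{\word{i}} + C_{\word{i}}$ with a constant $C_{\word{i}} = X_{T_1}^{\word{i}} - Y_0^{\word{i}}$ that vanishes under differentiation. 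Performing the change of variables $u_j = t_j - T_1$ for $j > k$ then turns the second block of integrals into an integration of $d\widehat{Y}_{u_{k+1}}^{\word{i_{k+1}}} \cdots d\widehat{Y}_{u_m}^{\word{i_m}}$ over $\{0 < u_{k+1} < \cdots < u_m < T_2\}$.

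By Fubini's theorem, each summand factors as a product of two iterated integrals, which are, by Proposition \ref{signature segment}'s underlying definition, exactly $S^{\word{i_1 \cdots i_k}}(X)$ and $S^{\word{i_{k+1} \cdots i_m}}(Y)$, with the conventions $S^{\emptyword}(X) = S^{\emptyword}(Y) = 1$ covering the boundary cases $k=0$ and $k=m$. Summing over $k$ yields the claimed identity.

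The steps here are conceptually straightforward; the main bookkeeping point — and the closest thing to an obstacle — is the care required to verify that the time-augmentation behaves correctly under the shift $u = t - T_1$ (which works because $dt = du$ and because the additive constants appearing in the spatial components of $X*Y$ on $[T_1,T_1+T_2]$ disappear after differentiation). Everything else is additivity of the iterated integral under the decomposition of the simplex.
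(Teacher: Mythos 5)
Your argument is correct: the paper states Chen's relation as a classical fact and gives no proof of its own, so there is nothing to compare against, but your decomposition of the simplex $\{0<t_1<\cdots<t_m<T_1+T_2\}$ according to the splitting index $k$, followed by the identification of the differentials on each block and the shift $u=t-T_1$, is exactly the standard proof. The two points you flag as requiring care are indeed the right ones — the constants $X_{T_1}^{\word{i}}-Y_0^{\word{i}}$ and $T_1$ vanish under differentiation, and the hyperplanes $\{t_j=T_1\}$ are null for the Stieltjes measures because the concatenated path is continuous (this last justification is worth phrasing in terms of the measures $d\widehat{(X*Y)}^{\word{i_j}}$ having no atom at $T_1$, rather than Lebesgue measure, but for the continuous finite-variation paths considered here the conclusion is the same).
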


In the following, we consider a time horizon $T >0$, an integer $K \ge 2$, and $X = \Delta X_1* \cdots *\Delta X_K$ defined as the concatenation of $K$ affine paths $\Delta X_k : [0,\frac{T}{K}] \rightarrow \mathbb{R}^d$. We denote by $X_{|k} = \Delta X_1 * \cdots *\Delta X_k$ and $\prescript{}{k|}{X} = \Delta X_{K-k+1} * \cdots *\Delta X_K$ the forward and backward partial concatenations, respectively. By Proposition \ref{chen relation}, the terminal value of the $N$-step truncated time-augmented signature of $X$ can be computed either via a forward or via a backward recursive procedure. For clarity of exposition, we focus on the forward approach, as the backward approach is entirely analogous. We first compute the terminal value of the $N$-step truncated time-augmented signature of the initial segment $X_{|1}=\Delta X_1$ using Proposition \ref{signature segment}. Then, for each $k \in \{1,\cdots,K-1\}$, we iteratively compute the signature of $X_{|k+1}=X_{|k}*\Delta X_{k+1}$ by applying Chen’s relation from Proposition \ref{chen relation}, until we obtain the signature of the full path $X = X_{|K}$. In particular, for each $m \in \{1,\cdots,N\}$ and each word $\word{i_1 \cdots i_m} \in \mathcal{W}_{m}$ we have

\[
S^{\word{i_1 \cdots i_m}}(X_{|k+1}) = \sum_{j=0}^{m} S^{\word{i_1 \cdots i_j}}(X_{|k})S^{\word{i_{j+1} \cdots i_m}}(\Delta X_{k+1}).
\]
For the forward approach, the computation of $S^{\word{i_1 \cdots i_m}}(X_{|k+1})$ requires 

\begin{enumerate}[label=(\roman*)]
    \item computing $(S^{\word{i_1 \cdots i_j}}(X_{|k}))_{j \in \{0,\cdots, m\}}$,
    \item computing $(S^{\word{i_{j+1} \cdots i_m}}(\Delta X_{k+1}))_{j \in \{0,\cdots, m\}}$,
    \item performing $2m$ elementary operations. This corresponds to parsing the $m+1$ elements in the summation, and performing $m-1$ products (since there is no products for $j=0$ and $j=m$). 
\end{enumerate}

In the following, we simplify the analysis of the total computational cost by assuming that the computational cost of $S^{\word{i_1 \cdots i_m}}(\Delta X_k)$ is equal to one, whatever the value of $m$. We denote by $\mathcal{P}(\mathcal{W}_{\infty})$ the power set of $\mathcal{W}_{<\infty}$ and by $\Phi_{\text{f}} : \mathcal{P}(\mathcal{W}_{<\infty}) \rightarrow \mathcal{P}(\mathcal{W}_{< \infty})$ defined as follows

\begin{align*}
    \Phi_{\text{f}}(B) = \{ \word{w} \in \mathcal{W}_{< \infty} ~|~ \exists \word{v} \in \mathcal{W}_{<\infty}, \word{wv} \in B\}.
\end{align*}
Since $\emptyword \in \mathcal{W}_{< \infty}$, we have $B \subset \Phi_{\text{f}}(B)$. Thus $\Phi_{\text{f}}(B) \subset \Phi_{\text{f}} \circ \Phi_{\text{f}}(B)$. By definition of $\Phi_{\text{f}}$, we also have $\Phi_{\text{f}} \circ \Phi_{\text{f}}(B) \subset \Phi_{\text{f}}(B)$, so $\Phi_{\text{f}} \circ \Phi_{\text{f}} = \Phi_{\text{f}}$. For $k \in \{2,\cdots,K\}$, and $B \subset \mathcal{W}_{<\infty}$, the computation of $(S^{\word{w}}(X_{|k}))_{\word{w} \in B}$ requires (i) computing $(S^{\word{w}}(X_{|k-1}))_{\word{w} \in \Phi_{\text{f}}(B)}$, (ii) performing $\sum_{\word{w} \in B} 2\|\word{w}\|=2\length{B}$ elementary operations. We denote by $C(B,k)$ the required number of elementary operations to compute $(S^{\word{w}}(X_{|k}))_{\word{w} \in B}$. By combining (i) and (ii) we obtain the following recursive formula

\[
C(B,k)=C(\Phi_{\text{f}}(B),k-1) + 2\length{B}.
\]
Since $\Phi_{\text{f}} \circ \Phi_{\text{f}}=\Phi_{\text{f}}$ and $C(B,1)=\text{Card}(B)$, we obtain the following formula for each $k \ge 2$

\[
    C(B,k) = \text{Card}(\Phi_{\text{f}}(B)) + 2(k-2)\length{\Phi_{\text{f}}(B)}  + 2\length{B}.
\]
We can use similar recursive arguments for the cost of the backward approach replacing $\Phi_{\text{f}}$ by $\Phi_{\text{b}} : \mathcal{P}(\mathcal{W}_{<\infty}) \rightarrow \mathcal{P}(\mathcal{W}_{< \infty})$ defined as follows

\[
    \Phi_{\text{b}}(B) = \{ \word{w} \in \mathcal{W}_{< \infty} ~|~ \exists \word{v} \in \mathcal{W}_{<\infty}, \word{vw} \in B\}.
\]
Finally, we obtain formulas for the total cost for the forward and backward approaches.

\begin{align}
\textbf{Forward approach: }  C_{\text{f}}(B,K) &= \text{Card}(\Phi_{\text{f}}(B)) 
+ 2(K-2)\length{\Phi_{\text{f}}(B)}  
+ 2\length{B}.
\label{eq:Cf} \\
\textbf{Backward approach: }C_{\text{b}}(B,K) &= \text{Card}(\Phi_{\text{b}}(B)) 
+ 2(K-2)\length{\Phi_{\text{b}}(B)} 
+ 2\length{B}.
\label{eq:Cb}
\end{align}

\begin{theorem}[Optimality of prefixes and suffixes]
\label{main result 4}
For any truncation order $N \in \mathbb{N}$, the sets $\mathcal{W}_{\le N}^{*}$ and ${}^{*}\mathcal{W}_{\le N}$ form bases of words for $\mathcal{W}_{N}$ with minimal computational cost when using, respectively, the backward and the forward approach. 
\end{theorem}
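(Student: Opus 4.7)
The plan is to prove the backward statement in detail and to note that the forward statement follows by an entirely symmetric argument (swapping $\Phi_{\text{b}}$ for $\Phi_{\text{f}}$ and prefixes for suffixes). Thanks to the explicit formula \eqref{backward cost}, it suffices to show that for any basis of words $B$ for $\mathcal{W}_{N}$ and any $K \ge 2$,
\begin{align*}
    C_{\text{b}}(B, K) \ge C_{\text{b}}(\mathcal{W}_{\le N}^{*}, K).
\end{align*}

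The first step is to evaluate $C_{\text{b}}(\mathcal{W}_{\le N}^{*}, K)$ explicitly, which reduces to computing $\Phi_{\text{b}}(\mathcal{W}_{\le N}^{*})$. I would argue by double inclusion: since $B \subset \Phi_{\text{b}}(B)$ always holds (take $\word{v}=\emptyword$), we already have $\mathcal{W}_{\le N}^{*} \subset \Phi_{\text{b}}(\mathcal{W}_{\le N}^{*})$. Conversely, any $\word{w} \in \Phi_{\text{b}}(\mathcal{W}_{\le N}^{*})$ is a suffix (in the concatenation sense) of some word in $\mathcal{W}_{\le N}^{*}$; such a suffix either is empty or does not end by $\word{0}$ (because its parent does not), and its length is at most $N$, so $\word{w} \in \mathcal{W}_{\le N}^{*}$. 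Hence $\Phi_{\text{b}}(\mathcal{W}_{\le N}^{*}) = \mathcal{W}_{\le N}^{*}$, and \eqref{backward cost} yields
\begin{align*}
    C_{\text{b}}(\mathcal{W}_{\le N}^{*}, K) = \text{Card}(\mathcal{W}_{\le N}^{*}) + 2(K-1)\length{\mathcal{W}_{\le N}^{*}}.
\end{align*}

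The second step is to bound each of the three contributions in $C_{\text{b}}(B, K)$ from below. For the cardinality term, Corollary \ref{cardinal bases} gives $\text{Card}(B) = (d+1)^N$; combined with $B \subset \Phi_{\text{b}}(B)$ we obtain
\begin{align*}
    \text{Card}(\Phi_{\text{b}}(B)) \ge (d+1)^N = \text{Card}(\mathcal{W}_{\le N}^{*}).
\end{align*}
For the length terms, Corollary \ref{corrolary length} states that $\mathcal{W}_{\le N}^{*}$ has minimal length among all bases of words for $\mathcal{W}_{N}$, so $\length{B} \ge \length{\mathcal{W}_{\le N}^{*}}$, and then $\length{\Phi_{\text{b}}(B)} \ge \length{B} \ge \length{\mathcal{W}_{\le N}^{*}}$ since each word in $B$ already contributes its own length to $\length{\Phi_{\text{b}}(B)}$.

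Plugging these three inequalities into \eqref{backward cost} and comparing with the closed form for $C_{\text{b}}(\mathcal{W}_{\le N}^{*}, K)$ concludes the backward case. The forward case follows by replacing $\Phi_{\text{b}}$ by $\Phi_{\text{f}}$ and $\mathcal{W}_{\le N}^{*}$ by ${}^{*}\mathcal{W}_{\le N}$, the identity $\Phi_{\text{f}}({}^{*}\mathcal{W}_{\le N}) = {}^{*}\mathcal{W}_{\le N}$ being proved by the same case analysis applied to initial rather than terminal letters. The only really delicate point is therefore the verification that $\Phi_{\text{b}}$ and $\Phi_{\text{f}}$ fix $\mathcal{W}_{\le N}^{*}$ and ${}^{*}\mathcal{W}_{\le N}$ respectively; once this is granted, the optimality reduces to assembling the two corollaries already established.
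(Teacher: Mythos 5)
Your proposal is correct and follows essentially the same route as the paper's proof: it combines the inclusion $B \subset \Phi_{\text{b}}(B)$, the cardinality identity from Corollary \ref{cardinal bases}, the length minimality from Corollary \ref{corrolary length}, and the fixed-point property $\Phi_{\text{b}}(\mathcal{W}_{\le N}^{*}) = \mathcal{W}_{\le N}^{*}$ (which you verify slightly more explicitly than the paper does), then concludes by symmetry for the forward case. No gaps.
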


\subsection{The case of Exponential Fading Memory Signature}

We highlight that the notion of bases of words for $\mathcal{W}_N$ also plays a key role in the context of the recently introduced (time-augmented) Exponentially Fading Memory (EFM) signature \citep{abi2025exponentially}. For a time-augmented continuous path $(\widehat{X}_t)_{t \in (-\infty,T]} = (t,X_t^{\word{1}},\cdots, X_t^{\word{d}})_{t \in (-\infty,T]}$, a vector $\mathbf{\lambda}=(\lambda_{\word{0}},\cdots,\lambda_{\word{d}})$ with positive entries, a word $\word{i_1 \cdots i_n} \in \mathcal{W}_n$ and $t \in (-\infty,T]$, these authors consider the following quantities:

\begin{align*}
\begin{cases}
\widehat{\mathbb{X}}^{\mathbf{\lambda}, \word{i_1 \cdots i_n}}_t = \int_{-\infty <t_1<\cdots<t_n<t} e^{- \lambda_{\word{i_1}} (t-t_1)} d\widehat{X}_{t_1}^{\word{i_1}} \cdots  e^{- \lambda_{\word{i_n}} (t-t_n)} d\widehat{X}_{t_n}^{\word{i_n}}=\int_{-\infty <t_n <t}e^{-\lambda_{\word{i_1 \cdots i_n}}(t-t_n)}\widehat{\mathbb{X}}_{t_n}^{\mathbf{\lambda},\word{i_1 \cdots i_{n-1}}} d\widehat{X}_{t_n}^{\word{i_n}}, \\
\widehat{\mathbb{X}}^{\mathbf{\lambda},\emptyword}_t = 1,
\end{cases}
\end{align*}
with $\lambda_{\word{i_1\cdots i_n}}:=\sum_{k=1}^{n} \lambda_{\word{i_k}}$. The time-augmented EFM signature is defined as $\widehat{\mathbb{X}}^{\mathbf{\lambda}}:=(\widehat{\mathbb{X}}^{\mathbf{\lambda}, \word{w}})_{\word{w} \in \mathcal{W}_{<\infty}}$ As for the classical signature, we can define the time-augmented EFM signature for continuous semimartingales using the Stratonovich integration rule. Since $\widehat{\mathbb{X}}_{t}^{\mathbf{\lambda}, \word{0_k}} = \frac{1}{\lambda_{\word{0}}^k k!}>0$ for all $t \in (-\infty,T]$, using the shuffle product property of the time-augmented EFM signature \citep{abi2025exponentially}[Proposition 4.8], we obtain

\begin{align}
\label{egalité bracket efm}
\forall \word{w} \in \mathcal{W}_{< \infty},\quad \left<\word{w},\widehat{\mathbb{X}}^{\mathbf{\lambda}}_t\right> = \left<\lambda_{\word{0}}^k k! (\word{w} \shuffle \word{0_k}),\widehat{\mathbb{X}}^{\mathbf{\lambda}}_t\right>, \forall k \in \mathbb{N}, \forall t \in (-\infty,T].
\end{align}

The equality \eqref{egalité bracket efm} is stronger than its counterpart \eqref{egalité bracket} for the classical time-augmented signature since it is uniform in time (which is not the case for \eqref{egalité bracket} due to the presence of the factor $\frac{k!}{T^{k}}$). When considering either a $\mathbb{R}^d$-valued path or a $\mathbb{R}^d$-valued stochastic process $(X_t)_{t \in (-\infty,T]}$, equation \eqref{egalité bracket efm} implies that 
\[
\text{Span}(\widehat{\mathbb{X}}^{\mathbf{\lambda},\word{w}} ~|~ \word{w} \in \mathcal{W}_{\le N})=\text{Span}(\widehat{\mathbb{X}}^{\mathbf{\lambda},\word{w}} ~|~ \word{w} \in \mathcal{W}_{N})
\] 
where the linear span is understood up to $dt$-a.e. equality in the deterministic case or $dt \otimes d\mathbb{P}$-a.e. equality in the stochastic case. By replacing: (i) \eqref{egalité bracket} with \eqref{egalité bracket efm}, (ii) $\widehat{\mathbb{X}}_T$ with $\widehat{\mathbb{X}}^{\mathbf{\lambda}}$, (iii) $\mathbb{P}$-a.s. equality with either $dt$-a.e. equality or $dt \otimes d \mathbb{P}$-a.e. equality, we see that the motivations 1 and 2 of the bases of words for $\mathcal{W}_N$ are preserved for the time-augmented EFM signature. Working with bases of $\text{Span}(\widehat{\mathbb{X}}^{\mathbf{\lambda},\word{w}} ~|~ \word{w} \in \mathcal{W}_{\le N})$ is relevant in numerous applications of the time-augmented EFM signature like the approximation of a functional using a single path. In a
linear regression built on one single sample path, the relevant design space is the
space of time functions
\[
[0,T] \ni t\longmapsto
\widehat{\mathbb X}^{\mathbf\lambda,\word w}_t(\omega).
\]
One cannot use information coming from other sample paths to remove linear dependencies along this path. It is therefore the $\mathbb{P}$-a.s. pathwise linear independence for the
$dt$-a.e. equality which is relevant rather than the weaker linear independence for the $dt \otimes d\mathbb{P}$-a.e. equality. The following theorem provides a sufficient condition for this property.

\begin{theorem}
\label{main result EFM}
Let $m \in \mathbb{N}^*$ and $(\Omega,\mathcal{F},\mathbb{P})$ be a probability space, on which we define a $m$-dimensional Brownian motion $(W_t)_{t \in [0,T]}$ with its natural filtration $(\mathcal{F}_t)_{t \in [0,T]}$. Let $(X_t)_{t \in (-\infty,T]}$ be a stochastic process taking values in $\mathbb{R}^{d}$ with $d\le m$, and $\eta \in \mathcal{C}^{1-\text{var}}((-\infty,0],\mathbb{R}^d)$ be a continuous path of finite variation, such that for all $t \in (-\infty,T]$:
\begin{align*}
X_t =
\begin{cases}
\eta_t &\text{ if } t \le 0, \\
\eta_{0} + \int_{0}^{t} b_s ds + \int_{0}^{t} \sigma_s \cdot \circ dW_s &\text{ if } t > 0,
\end{cases}
\end{align*}
with $(b_t)_{t \in [0,T]}$ and $(\sigma_t)_{t \in [0,T]}$ progressively measurable processes respectively taking values in $\mathbb{R}^d$ and in $\mathbb{R}^{d\times m}$, such that $\mathbb{P}\left[\int_{0}^{T} \|b_t\|dt+\int_{0}^{T}\operatorname{Tr}(\sigma_t\sigma_{t}^{\top})dt < \infty\right]=1$. Let $N \in \mathbb{N}$ be a truncation order. Let $B \subset \mathcal{W}_{\le N}$ be a basis of words for $\mathcal{W}_N$. Assume that there exists a Borel set $C \in \mathcal{B}([0,T])$ such that $\mathcal{L}(C\cap I)>0$ for each non-empty open interval $I\subset[0,T]$, and such that $\sigma_t \sigma_t^{\top}$ is non-singular $\mathbbm{1}_{C}(t)dt \otimes d\mathbb{P}$-a.e.. Then for $\mathbb{P}$-almost all $\omega \in \Omega$, the family $(\widehat{\mathbb{X}}^{\mathbf{\lambda},\word{w}}(\omega))_{ \word{w} \in B}$ is a basis of $\text{Span}(\widehat{\mathbb{X}}^{\mathbf{\lambda},\word{w}}(\omega) ~|~ \word{w} \in \mathcal{W}_{\le N})$ for the $\mathbbm{1}_{C}(t)dt$-a.e. equality. 
\end{theorem}

\begin{remark}
\label{remark EFM}
Like in Remark \ref{remark base libre and existence and uniqueness of solution}, it suffices to provide a particular basis $B$ of words for $\mathcal{W}_N$ such that,  $\mathbb{P}$-a.s., the family $(\widehat{\mathbb{X}}^{\mathbf{\lambda},\word{w}})_{ \word{w} \in B}$ is linearly independent for the $\mathbbm{1}_{C}(t)dt$-a.e. equality. We will show that $\mathcal{W}_N$ is such a set.
\end{remark}

\section{Numerical experiments}
\label{part2}

Many problems can be formulated as follows: 

\begin{align*}
Y = F( (X_t)_{t \in [0,T]}),    
\end{align*}

where $(X_t)_{t \in [0,T]}$ is some input stochastic process taking values in $\mathbb{R}^d$ and $F$ is some (possibly unknown) functional defined on a path space $\Lambda_T \subset (\mathbb{R}^d)^{[0,T]}$ \cite{levin2016learningpastpredictingstatistics,bayer2025pricingamericanoptionsrough,arribas2020sig,fermanian2022functional}. Motivated by universal approximation theorems which ensure that linear functionals of the truncated time-augmented signature can approximate any continuous functionals of paths arbitrarily well as the truncation order increases \cite{cuchiero2022universal,cuchiero2023global,bayer2023primal}, we can approximate the random variable $Y$ by a finite linear combination of components of the truncated signature terminal value of $(t,X_t)_{t \in [0,T]}$, instead of applying the functional $F$ to the input signal. This can be relevant if $F$ is unknown or if $F$ is known but computationally burdening. The knowledge of universal subfamilies of components of the truncated signature terminal value which are bases of the linear span of all components can naturally be leveraged for several purposes in this linear approximation framework:

\begin{enumerate}
    \item the optimal model is identifiable, 
    \item the storage cost of the features is reduced,
    \item the computation time during the training phase is reduced,
    \item the prediction performance is preserved in theory, while empirical improvements may arise in practice due to improved conditioning, reduced variance, or more stable cross-validation.
\end{enumerate}

The identifiability of the model and the reduction in storage cost are direct consequences of the extraction of a basis of regressors. In particular, for a stochastic process taking values in $\mathbb{R}^d$ and a given truncation order $N \in \mathbb{N}$, the cardinality of any basis of the linear span of the components of the terminal value of its truncated time-augmented signature is at most $(d+1)^N$ whereas the cardinality of all components $\frac{(d+1)^{N+1} - 1}{d}$. We denote by $\mathcal{G}_{d,N}$ the ratio of storage cost when switching from the full set of features to a basis. We then have

\begin{align*}
    \mathcal{G}_{d,N}:=\frac{d(d+1)^{N}}{(d+1)^{N+1}-1} \underset{N \to \infty}{\longrightarrow} \frac{d}{d+1},
\end{align*}

which is minimal for $d=1$: the gain of storage cost is maximal when we consider one-dimensional stochastic processes, and this gain is decreasing with the dimension of the process. In the following, we study the impact on the computation time and the performance of such an a priori selection of the features when performing a linear regression. We decide to study this problem for the following functional $F$

\begin{align*}
F((X_t)_{t \in [0,T]})=\beta_{\text{true}}^* \widehat{\mathbb{X}}_{T}^{\le N_{\text{true}}},   
\end{align*}

where $(X_t)_{t \in [0,T]} \in \Lambda_T$, $N_{\text{true}} \in \mathbb{N}^*$ and $\beta_{\text{true}} \in \mathbb{R}^{\frac{(d+1)^{N_{\text{true}}+1} - 1}{d}}$ denotes the (column) parameter vector.

Note that this particular choice of functional is not restrictive if $N_{\text{true}}$ is large enough because any continuous functional can be arbitrarily well approximated by functionals of this form according to universal approximation theorems. Motivated by Theorem \ref{main result 4}, we will use the components of the truncated time-augmented signature associated with the set of suffixes to obtain a basis of features. In what follows we will use the notations:

\begin{itemize}
    \item $n \in \mathbb{N}^*$ denotes the sample size,
    \item $N \in \{2,\cdots,6\} \subset  \{1, \cdots, N_{\text{true}} \}$ denotes the chosen truncation order,
    \item $\mathbf{X}_{\text{all}} \in \mathbb{R}^{\frac{(d+1)^{N+1}-1}{d} \times n}$ denotes the design matrix where the columns $(\mathbf{X}_{\text{all},\cdot,k})_{1 \le k \le n}$ are independent and identically distributed samples of $\widehat{\mathbb{X}}_{T}^{\le N} \in \mathbb{R}^{\frac{(d+1)^{N+1}-1}{d}}$,
    \item $\mathbf{X}_{\text{suffix}} \in \mathbb{R}^{(d+1)^N \times n}$ denotes the design matrix where the columns $(\mathbf{X}_{\text{suffix},\cdot,k})_{1 \le k \le n}$ are independent and identically distributed samples of $(\widehat{\mathbb{X}}_{T}^{\word{w}})_{\word{w} \in {}^{*}\mathcal{W}_{\le N}} \in \mathbb{R}^{(d+1)^N}$,
    \item $\mathbf{Y} \in \mathbb{R}^{1 \times n}$ denotes the output row vector where the entries $(\mathbf{Y}_k)_{1 \le k \le n}$ are independent and identically distributed samples of $Y$.
\end{itemize}

In particular, each observation $(\mathbf{X}_{\text{all},\cdot,k},Y_{k})$ has the same distribution as $( \widehat{\mathbb{X}}_{T}^{\le N}, F((X_t)_{t \in [0,T]})$ and each $(\mathbf{X}_{\text{suffix},\cdot,k},Y_{k})$ has the same distribution as $( (\widehat{\mathbb{X}}_{T}^{\word{w}})_{\word{w} \in {}^{*}\mathcal{W}_{\le N}}, F((X_t)_{t \in [0,T]})$. For our experiments, we used the Python library iisignature \cite{iisignature} which implements the computation of the signature and scikit-learn \footnote{\url{https://scikit-learn.org}} which implements the Ridge regression with a cross-validation of the regularization hyperparameter. Before going further, we define the following 

\begin{align*}
\hat{\beta}_{\text{all}}(\lambda)&:= \left(\frac{1}{n} \mathbf{X}_{\text{all}}\mathbf{X}_{\text{all}}^{*} + \lambda I_{\frac{(d+1)^{N+1}-1}{d}}\right)^{-1}\frac{\mathbf{X}_{\text{all}}\mathbf{Y}^* }{n}, \\
\hat{\beta}_{\text{suffix}}(\lambda)&:= \left(\frac{1}{n} \mathbf{X}_{\text{suffix}}\mathbf{X}_{\text{suffix}}^{*}+ \lambda I_{(d+1)^{N}}\right)^{-1}\frac{\mathbf{X}_{\text{suffix}}\mathbf{Y}^{*}}{n},
\end{align*}

which are the Ridge estimators of the models taking all the features (resp. only those associated with suffixes) with regularization parameter $\lambda \ge 0$. We recall that we recover the Ordinary Least Squares estimator when $\lambda=0$. For a given set of features, the search for the optimal model consists in computing $\hat{\beta}_{\text{all}}(\hat{\lambda}_{\text{all}})$ and $\hat{\beta}_{\text{suffix}}(\hat{\lambda}_{\text{suffix}})$ where the optimal regularization parameters $\hat{\lambda}_{\text{all}}$ and $\hat{\lambda}_{\text{suffix}}$ are computed with an efficient Leave-One-Out cross-validation technique. Before fitting the Ridge models, the features are standardized to have zero mean and unit variance. In short, the only difference between both models lies in the features set; the regression procedure, regularization strategy and cross-validation protocol are otherwise identical. We recall that for every sample size $n$, the empirical variance matrix $\frac{1}{n}\mathbf{X}_{\text{all}}\mathbf{X}_{\text{all}}^{*}$ is almost-surely singular and the OLS estimator ($\lambda=0)$ is not well defined. It is standard to introduce a regularization parameter $\lambda>0$ to obtain a well defined Ridge estimator. On the other hand, the empirical variance matrix $\frac{1}{n}\mathbf{X}_{\text{suffix}}\mathbf{X}_{\text{suffix}}^{*}$ is almost-surely invertible for a sufficiently large sample size $n$ and the OLS estimator is well defined. However, due to the high dimensionality and strong correlations of the signature features, the OLS estimator may suffer from high variance and poor generalization performance. Introducing two positive regularization parameters $\lambda_{\text{all}}$ and $\lambda_{\text{suffix}}$ for the computation of $\hat{\beta}_{\text{all}}(\lambda_{\text{all}})$ and $\hat{\beta}_{\text{suffix}}(\lambda_{\text{suffix}})$ allows us to control their variance and can significantly improve their out-of-sample prediction accuracies. The grid used for the cross-validation of $\lambda_{\text{all}}$ and $\lambda_{\text{suffix}}$ is the following: $\{0\}\cup \bigl\{ 10^{k_i} \;\big|\; k_i = -2 + i \,\frac{6}{99},\ i = 0,1,\dots,99 \bigr\}$. We will see further in Section \ref{gen error analysis} that the cross-validated regularization parameter $\hat{\lambda}_{\text{suffix}}$ is much smaller than the cross-validated regularization parameter $\hat{\lambda}_{\text{all}}$ and may be null in some cases. In the following we will respectively consider a Brownian motion $(W_t)_{t \in [0,1]}$ and an Ornstein-Uhlenbeck process defined on the time interval $[0,1]$ taking their values in $\mathbb{R}$: these two stochastic processes satisfy the assumptions of Theorem \ref{main result 2}. The Ornstein-Uhlenbeck process is the solution to $dX_t = -(X_t+1)dt + dW_t$ starting from the initial condition $X_0=0$. The paths are discretized on the uniform grid $(t_k)_{0 \le k \le K}$ defined by $t_k = \frac{k}{K}$ for $K=100$. For the construction of the output, we consider $N_{\text{true}}=10$ and $\beta_{\text{true}}$ takes the following values $\beta_{\text{true}}=(1,1,\cdots,1)^*$, $\beta_{\text{true}}=(1,2,\cdots,2^{11}-1)^*$ and $\beta_{\text{true}}=(2^{11}-1,\cdots,2,1)^*$. We choose these parameters to test models that respectively assign: (i) equal weight to higher and lower order terms, (ii) more weight on higher order terms, and (iii) more weight on lower order terms.

\subsection{Training time analysis} 

We can decompose the training time into: (i) the computation of the signature terms $\widehat{\mathbb{X}}_{T}^{\le N}$ (resp. $(\widehat{\mathbb{X}}_{T}^{\word{w}})_{\word{w} \in {}^{*}\mathcal{W}_{\le N}}$), and (ii) the computation of $\hat{\beta}_{\text{all}}(\hat{\lambda}_{\text{all}})$ (resp. $\hat{\beta}_{\text{suffix}}(\hat{\lambda}_{\text{suffix}})$). Using the total cost formula for the forward approach \eqref{eq:Cf}, we obtain the following theoretical ratio of computational costs 

\begin{align*}
    \frac{C({}^{*}\mathcal{W}_{\le N},K)}{C(\mathcal{W}_{\le N},K)} &= \frac{\text{Card}({}^{*}\mathcal{W}_{\le N}) + 2(K-1)\length{{}^{*}\mathcal{W}_{\le N}}}{\text{Card}(\mathcal{W}_{\le N}) + 2(K-1)\length{\mathcal{W}_{\le N}}} \\
    &\underset{K \to \infty}{\longrightarrow} \frac{\length{ {}^{*}\mathcal{W}_{\le N} }}{\length{\mathcal{W}_{\le N}}} = \frac{d}{d+1}=\frac{1}{2}.
\end{align*}

We provide in Figure \ref{figure::sig_suffix} the observed computation times for the signature terms, together with a comparison between the empirical scaling ratio and the corresponding theoretical ratio. 

\begin{figure}[H]
\caption{\textbf{Computation time of signature components and comparison with theoretical scaling.} We sample $200$ batches of size $n=500$ of discretized Brownian motion sample paths, and we compute the empirical mean over the batches of the time required to obtain the signature components.}

    \centering
    \subfloat[Computation time.]{%
        \includegraphics[width=0.45\linewidth]{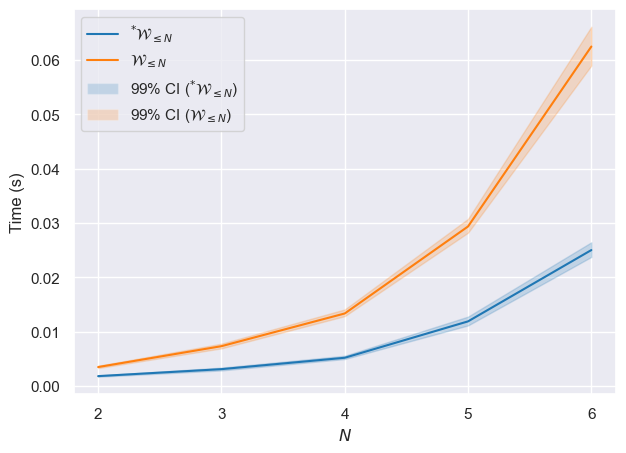}
    }\hfill
    \subfloat[Computation time ratio.]{%
        \includegraphics[width=0.45\linewidth]{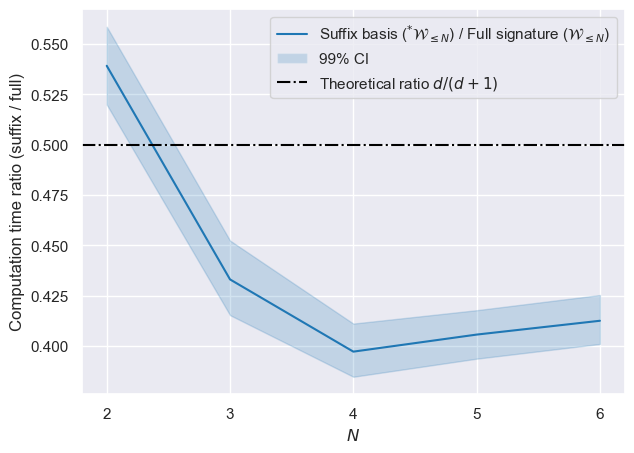}
    }
\label{figure::sig_suffix}
\end{figure}

We observe that the empirical computation time of the suffix-based signature is lower than the theoretical prediction suggests, in the sense that the measured time ratio is smaller than the expected factor $\frac{1}{2}$. This additional gain can be attributed to several practical effects that are not captured by the operation count underlying the theoretical
analysis. The theoretical complexity estimate assumes a uniform cost for all elementary algebraic operations, whereas the general signature implementation incurs additional overheads related to data structure management, memory
allocations and intermediate copies. We now turn to point (ii), namely the computational cost of fitting the Ridge regression model. We assume that $n \ge 2^{N+1} - 1$ for all $N \in \{2,\cdots,6\}$. In this regime, reducing the number of features from $\mathcal{W}_{\le N}$ to ${}^{*}\mathcal{W}_{\le N}$ leads to a proportional reduction in the model fitting time, as confirmed by the numerical experiments 
reported in Figure \ref{figure::fitting_time}. \footnote{Although the theoretical worst-case complexity of ridge regression scales quadratically with the number 
of features, the observed scaling in our experiments is close to linear scaling due to the moderate 
features dimension considered.}

\begin{figure}[H]
\caption{\textbf{Computation time for $\hat{\beta}_{\text{all}}(\hat{\lambda}_{\text{all}})$ and $\hat{\beta}_{\text{suffix}}(\hat{\lambda}_{\text{suffix}})$.} We sample $200$ batches of size $n=500$ of discretized Brownian motion paths, and we compute the empirical mean over the batches of the time required to obtain $\hat{\beta}_{\text{all}}(\hat{\lambda}_{\text{all}})$ (resp. $\hat{\beta}_{\text{suffix}}(\hat{\lambda}_{\text{suffix}})$) once the design matrix $\mathbf{X}_{\text{all}}$ (resp. $\mathbf{X}_{\text{suffix}}$) is computed. Note that the fitting time does not depend on the choice of $\beta_{\text{true}} \in \{(1,1,\cdots,1)^*, (1,2,\cdots,2^{11} - 1)^*, (2^{11}-1,\cdots,2,1)^*\}$ nor on whether $(X_t)_{t \in [0,T]}$ is a Brownian motion or an Ornstein-Uhlenbeck process.}

    \centering
    \subfloat[Fitting time.]{%
        \includegraphics[width=0.45\linewidth]{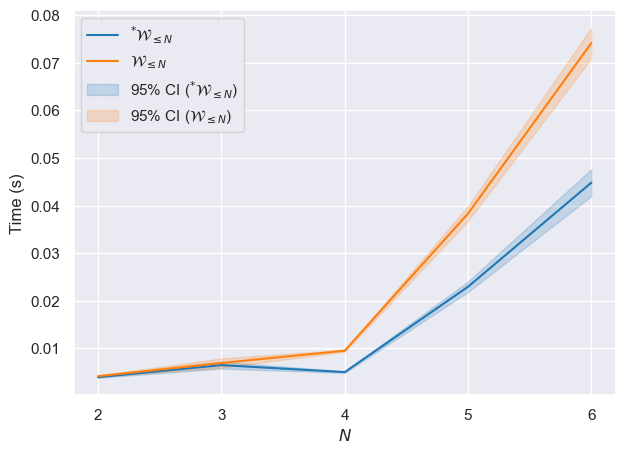}
    }\hfill
    \subfloat[Fitting time ratio.]{%
        \includegraphics[width=0.45\linewidth]{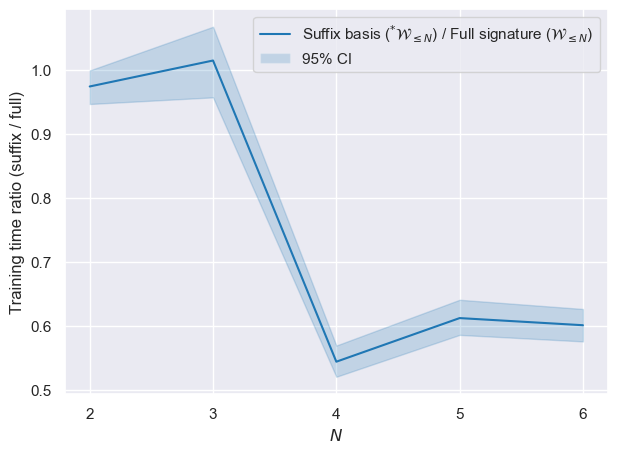}
    }
\label{figure::fitting_time}
\end{figure}

\subsection{Generalization error analysis}
\label{gen error analysis}

We compare the predictive performance of (i) a linear regression using all components of the truncated time-augmented signature and (ii) a linear regression using only the components associated with the set of suffixes. The generalization errors are defined as 

\begin{align*}
    \mathcal{E}_{\text{gen,all}} &:= \mathbb{E}\left[ |Y - \hat{\beta}_{\text{all}}(\hat{\lambda}_{\text{all}})^* \tilde{\mathbf{X}}_{\text{all}}|^2\right], \\
    \mathcal{E}_{\text{gen,suffix}} &:= \mathbb{E}\left[ |Y - \hat{\beta}_{\text{suffix}}(\hat{\lambda}_{\text{suffix}})^* \tilde{\mathbf{X}}_{\text{suffix}}|^2\right],
\end{align*}

where $\hat{\beta}_{\text{all}}(\hat{\lambda}_{\text{all}})$ is independent of $(\tilde{\mathbf{X}}_{\text{all}},Y)$ distributed like $( \widehat{\mathbb{X}}_{T}^{\le N}, F((X_t)_{t \in [0,T]}))$, and $\hat{\beta}_{\text{suffix}}(\hat{\lambda}_{\text{suffix}})$ is independent of $(\tilde{\mathbf{X}}_{\text{suffix}},Y)$ distributed like $( (\widehat{\mathbb{X}}_{T}^{\word{w}})_{\word{w} \in {}^{*}\mathcal{W}_{\le N}}, F((X_t)_{t \in [0,T]}))$. We are interested in the sign of
$\Delta \mathcal{E}_{\text{gen}} := \mathcal{E}_{\text{gen,all}} - \mathcal{E}_{\text{gen,suffix}}$.
The model using the features associated with the suffixes (resp. the model using all the features) performs better if $\Delta \mathcal{E}_{\text{gen}} \ge 0$ (resp. $\Delta \mathcal{E}_{\text{gen}} \le 0$). The quantity $\Delta \mathcal{E}_{\text{gen}}$ is estimated with a Monte Carlo method. For each truncation order $N \in \{2,\cdots,6\}$, we sample $B_N$ independent training batches of size $n=500$, and for each training batch we compute $\hat{\beta}_{\text{all}}(\hat{\lambda}_{\text{all}})$ and $\hat{\beta}_{\text{suffix}}(\hat{\lambda}_{\text{suffix}})$. Moreover, the independent test set consisting of $n=10^4$ i.i.d. paths used to estimate $\mathbb{E}\left[ |Y - \hat{\beta}_{\text{all}}(\hat{\lambda}_{\text{all}})^* \tilde{\mathbf{X}}_{\text{all}}|^2 ~|~ \hat{\beta}_{\text{all}}(\hat{\lambda}_{\text{all}})\right]$ and $\mathbb{E}\left[ |Y - \hat{\beta}_{\text{suffix}}(\hat{\lambda}_{\text{suffix}})^* \tilde{\mathbf{X}}_{\text{suffix}}|^2 ~|~ \hat{\beta}_{\text{suffix}}(\hat{\lambda}_{\text{suffix}})\right]$ is the same for all training batches. The procedure is summarized in Algorithm \ref{algo}. The number $B_N$ of training batches is empirically chosen to obtain a confidence interval width of the estimator of $\Delta \mathcal{E}_{\text{gen}}$ that is of the same order of magnitude for all $N$. In particular, we provide in Table \ref{tab:training_batches_and_results} the standard deviation of the difference of MSEs between both strategies for each truncation order. We provide in Figures \ref{figure::error_brownian}, \ref{figure::error_ou} the estimated quantity $\Delta \mathcal{E}_{\text{gen}}$ with its confidence interval of level $95\%$ for the Brownian motion and the Ornstein-Uhlenbeck cases. Table \ref{tab:training_batches_and_results} summarizes the comparative results.

\begin{algorithm}
\caption{Estimating $\Delta \mathcal{E}_{\text{gen}}$.}
\label{algo}
\textbf{Input.} 

\begin{itemize}[noitemsep, topsep=0pt]
    \item $N$: truncation order of the time-augmented signature.
    \item $n_{\text{test}}$: sample size of the unique test set.
    \item $n_{\text{train}}$: sample size of each training batch.
    \item $B_N$: number of i.i.d. training batches.
\end{itemize}

\textbf{Output.} $\widehat{\Delta \mathcal{E}_{\text{gen}}}$.
\begin{algorithmic}[1]
\State $\widehat{\Delta \mathcal{E}_{\text{gen}}} \leftarrow 0$.
\State Sampling of the test set $(X^{\text{test}}_{k},Y_k^{\text{test}})_{1 \le k \le n_{\text{test}}}$ consisting of i.i.d. samples distributed like $((X_t)_{t \in [0,T]},Y)$.
\State Computation of the $N$-step truncated signature terminal value of the time-augmented sample paths from the test set. For each $k \in \{1,\cdots,n_{\text{test}}\}$, we denote by $\mathbf{X}_{\text{all},k}^{\text{test}}$ (resp. $\mathbf{X}_{\text{suffix},k}^{\text{test}}$) the vector which contains all the components (resp. the components associated with suffixes) of the obtained signature.   
\For{ $i \in \{1,\cdots,B_N\}$}
\State Sampling of the training set $(X^{\text{train,i}}_{k},Y_k^{\text{train},i})_{1 \le k \le n_{\text{train}}}$ consisting of i.i.d. samples distributed like $((X_t)_{t \in [0,T]},Y)$.
\State Computation of the $N$-step truncated signature terminal value of the time-augmented sample paths from the training set. For each $k \in \{1,\cdots,n_{\text{train}}\}$, we denote by $\mathbf{X}_{\text{all},k}^{\text{train},i}$ (resp. $\mathbf{X}_{\text{suffix},k}^{\text{train,i}}$) the vector which contains all the components (resp. the components associated with suffixes) of the obtained signatures.  
\State Computation of $\hat{\beta}_{\text{all}}(\hat{\lambda}_{\text{all}})$ and $\hat{\beta}_{\text{suffix}}(\hat{\lambda}_{\text{suffix}})$ using $(\mathbf{X}_{\text{all},k}^{\text{train},i})_{1 \le k \le n_{\text{train}}}$ and $(\mathbf{X}_{\text{suffix},k}^{\text{train},i})_{1 \le k \le n_{\text{train}}}$ respectively.
\State $\widehat{\Delta \mathcal{E}_{\text{gen}}} \leftarrow \widehat{\Delta \mathcal{E}_{\text{gen}}} + \frac{1}{n_{\text{test}}}\sum_{k=1}^{n_{\text{test}}} \left[|Y_{k}^{\text{test}} - \hat{\beta}_{\text{all}}(\hat{\lambda}_{\text{all}})^*\mathbf{X}_{\text{all},k}^{\text{test}}|^2 - |Y_{k}^{\text{test}} - \hat{\beta}_{\text{suffix}}(\hat{\lambda}_{\text{suffix}})^*\mathbf{X}_{\text{suffix},k}^{\text{test}}|^2\right]$.
\EndFor
\\
\Return{$\widehat{\Delta \mathcal{E}_{\text{gen}}}$.}
\end{algorithmic}
\end{algorithm}

\begin{figure}
    \centering
    \subfloat[$\beta_{\text{true}}=(1,1,\cdots,1)^*$]{%
        \includegraphics[width=0.3\linewidth]{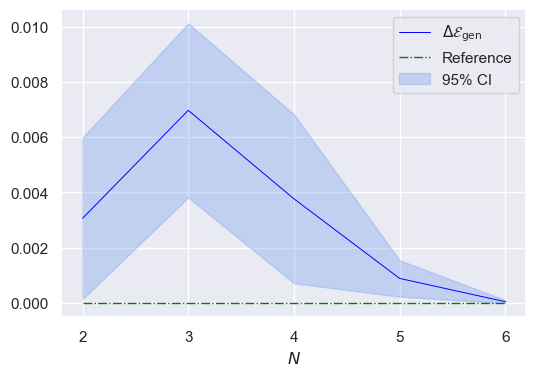}
    }\hfill
    \subfloat[$\beta_{\text{true}}=(1,2,\cdots,2^{11}-1)^*$]{%
        \includegraphics[width=0.3\linewidth]{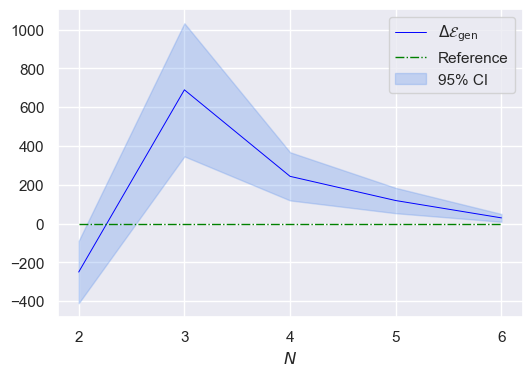}
    }
    \hfill
    \subfloat[$\beta_{\text{true}}=(2^{11}-1,\cdots,2,1)^*$]{%
        \includegraphics[width=0.3\linewidth]{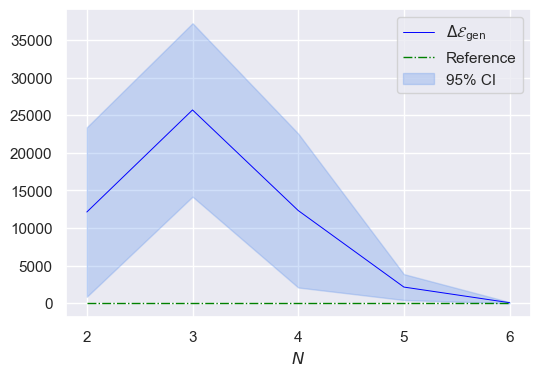}
    }
    \caption{Difference of generalization errors: Brownian motion case.}
    \label{figure::error_brownian}
\end{figure}

\begin{figure}
    \centering
    \subfloat[$\beta_{\text{true}}=(1,1,\cdots,1)^*$]{%
        \includegraphics[width=0.3\linewidth]{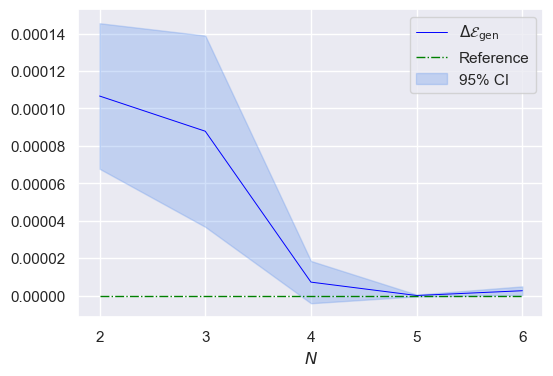}
    }\hfill
    \subfloat[$\beta_{\text{true}}=(1,2,\cdots,2^{11}-1)^*$]{%
        \includegraphics[width=0.3\linewidth]{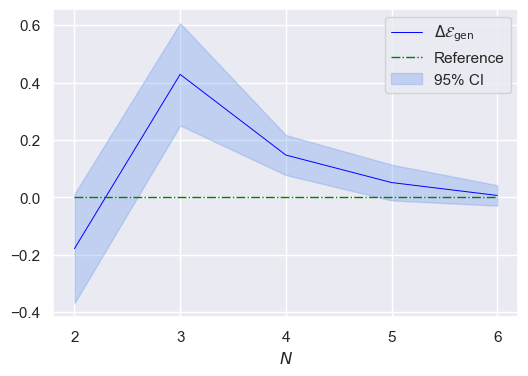}
    }
    \hfill
    \subfloat[$\beta_{\text{true}}=(2^{11}-1,\cdots,2,1)^*$]{%
        \includegraphics[width=0.3\linewidth]{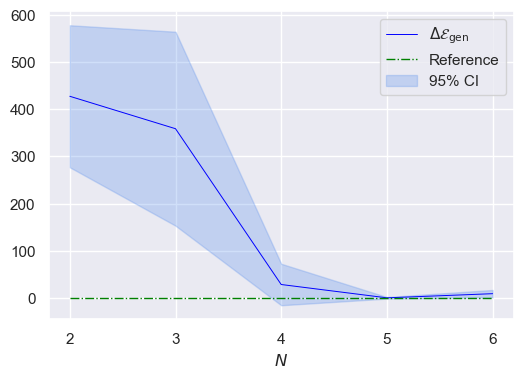}
    }
    \caption{Difference of generalization errors: Ornstein-Uhlenbeck case.}
    \label{figure::error_ou}
\end{figure}

\begin{table}
\footnotesize 
    \centering
    \caption{\textbf{Comparative results.} For each truncation order, we provide (i) the average cross-validated $\lambda$ across the training batches for both strategies, (ii) the standard error of the difference between the MSEs of both strategies, and (iii) the average $R^2$ metric across the training batches for both strategies. For each truncation order, the best $R^2$ is highlighted in bold.}
    \label{tab:training_batches_and_results}

    \begin{tabular}{c c} 
    \begin{minipage}{0.45\textwidth}
        \centering
        \textbf{Brownian case.} \\[0.5em]

        \begin{subtable}{\textwidth}
            \centering
            \caption{$\beta_{\text{true}} = (1,1,\cdots,1)^*$}
            \begin{tabular}{c||c|c|c|c|c}
            \toprule
            $N$ & 2 & 3 & 4 & 5 & 6\\
            \midrule
            \midrule
            $B_N$ & 200 & 120 & 20 & 10 & 10 \\
            \midrule
            $\hat{\lambda}_{\text{suffix}}$ & 0.02 & 0.17 & 0.01 & 0.03 & 0.01 \\
            $\hat{\lambda}_{\text{all}}$ & 1.28 & 4.35 & 1.61 & 0.99 & 0.12 \\
            \midrule
            $\sigma_{\Delta \text{MSE}}$ & 2.52 & 2.02 & 8.23 & 1.22 & 9.47  \\
            & $\times 10^{-2}$ & $\times 10^{-2}$ & $\times 10^{-3}$ & $\times 10^{-3}$ & $\times 10^{-5}$ \\
            $R_{\text{suffix}}^2$ & \textbf{89.42} & \textbf{97.89} & \textbf{99.66} & \textbf{99.96} & \textbf{99.99}\\
            $R_{\text{all}}^2$ & 89.41 & 97.86 & 99.65& 99.96 & 99.99\\
            \bottomrule
            \end{tabular}
        \end{subtable}

        \vspace{0.7em}

        \begin{subtable}{\textwidth}
            \centering
            \caption{$\beta_{\text{true}} = (1,2,\cdots,2^{11}-1)^*$}
            \begin{tabular}{c||c|c|c|c|c}
            \toprule
            $N$ & 2 & 3 & 4 & 5 & 6\\
            \midrule
            \midrule
            $B_N$ & 50 & 120 & 50 & 50 & 10 \\
            \midrule
            $\hat{\lambda}_{\text{suffix}}$ & 28.12 & 2.63 & 0.86 & 0.26 & 0.02 \\
            $\hat{\lambda}_{\text{all}}$ & 40.47 & 9.65 & 4.58 & 1.25 & 0.15 \\
            \midrule
            $\sigma_{\Delta \text{MSE}}$ & 6.91 & 2.29 & 5.33 & 2.81 & 3.85  \\
            & $\times 10^2$ & $\times 10^3$ & $\times 10^2$ & $\times 10^2$ & $\times 10^1$ \\
            $R_{\text{suffix}}^2$ & 49.28 & \textbf{65.59} & \textbf{90.43} & \textbf{96.96} & \textbf{99.24}\\
            $R_{\text{all}}^2$ & \textbf{49.57} & 64.79& 90.15 & 96.82 & 99.20\\
            \bottomrule
            \end{tabular}
        \end{subtable}

        \vspace{0.7em}

        \begin{subtable}{\textwidth}
            \centering
            \caption{$\beta_{\text{true}} = (2^{11}-1,\cdots,2,1)^*$}
            \begin{tabular}{c||c|c|c|c|c}
            \toprule
            $N$ & 2 & 3 & 4 & 5 & 6\\
            \midrule
            \midrule
            $B_N$ & 200 & 120 & 20 & 10 & 10 \\
            \midrule
            $\hat{\lambda}_{\text{suffix}}$ & 0.01 & 0.1 & 0.03 & 0.14 & 0.004 \\
            $\hat{\lambda}_{\text{all}}$ & 1.22 & 4.26 & 1.57 & 0.93 & 0.10\\
            \midrule
            $\sigma_{\Delta \text{MSE}}$ & 9.34 & 7.45 & 2.75 & 3.21 & 1.22  \\
            & $\times 10^{4}$ & $\times 10^{4}$ & $\times 10^{4}$ & $\times 10^{3}$ & $\times 10^{2}$ \\
            $R_{\text{suffix}}^2$ & \textbf{90.18} & \textbf{98.19} & \textbf{99.74} & \textbf{99.98} & \textbf{99.99}\\
            $R_{\text{all}}^2$ & 90.16 & 98.17& 99.73 & 99.97 & 99.99 \\
            \bottomrule
            \end{tabular}
        \end{subtable}

    \end{minipage}
    &
    \begin{minipage}{0.45\textwidth}
        \centering
        \textbf{Ornstein-Uhlenbeck case.} \\[0.5em]

        \begin{subtable}{\textwidth}
            \centering
            \caption{$\beta_{\text{true}} = (1,1,\cdots,1)^*$}
            \begin{tabular}{c||c|c|c|c|c}
            \toprule
            $N$ & 2 & 3 & 4 & 5 & 6\\
            \midrule
            \midrule
            $B_N$ & 800 & 120 & 20 & 10 & 10 \\
            \midrule
            $\hat{\lambda}_{\text{suffix}}$ & 0.02 & 0.04 & 0.003 & 0.00 & 0.00 \\
            $\hat{\lambda}_{\text{all}}$ & 0.26 & 0.43 & 0.14 & 0.01 & 0.01 \\
            \midrule
            $\sigma_{\Delta \text{MSE}}$ & 6.67 & 3.23 & 3.07 & 8.02 & 4.41  \\
            & $\times 10^{-4}$ & $\times 10^{-4}$ & $\times 10^{-5}$ & $\times 10^{-7}$ & $\times 10^{-6}$ \\
            $R_{\text{suffix}}^2$ & \textbf{97.46} & \textbf{99.74} & \textbf{99.98} & \textbf{99.99} & \textbf{99.99}\\
            $R_{\text{all}}^2$ & 97.46 & 99.73 & 99.98 & 99.99 & 99.99\\
            \bottomrule
            \end{tabular}
        \end{subtable}

        \vspace{0.7em}

        \begin{subtable}{\textwidth}
            \centering
            \caption{$\beta_{\text{true}} = (1,2,\cdots,2^{11}-1)^*$}
            \begin{tabular}{c||c|c|c|c|c}
            \toprule
            $N$ & 2 & 3 & 4 & 5 & 6\\
            \midrule
            \midrule
            $B_N$ & 400 & 120 & 80 & 10 & 10 \\
            \midrule
            $\hat{\lambda}_{\text{suffix}}$ & 1.03 & 0.24 & 0.004 & 0.004 & 0.001 \\
            $\hat{\lambda}_{\text{all}}$ & 2.62 & 1.37 & 0.19 & 0.08 & 0.01 \\
            \midrule
            $\sigma_{\Delta \text{MSE}}$ & 7.38 & 1.05 & 3.74 & 1.18 & 4.09  \\
            & $\times 10^{-1}$ &  & $\times 10^{-1}$ & $\times 10^{-1}$ & $\times 10^{-2}$ \\
            $R_{\text{suffix}}^2$ & 67.94 & \textbf{86.24} & \textbf{96.81} & \textbf{99.48} & \textbf{99.87} \\
            $R_{\text{all}}^2$ & \textbf{68.05} & 85.97 & 96.72 & 99.45 & 99.86 \\
            \bottomrule
            \end{tabular}
        \end{subtable}

        \vspace{0.7em}

        \begin{subtable}{\textwidth}
            \centering
            \caption{$\beta_{\text{true}} = (2^{11}-1,\cdots,2,1)^*$}
            \begin{tabular}{c||c|c|c|c|c}
            \toprule
            $N$ & 2 & 3 & 4 & 5 & 6\\
            \midrule
            \midrule
            $B_N$ & 800 & 120 & 20 & 10 & 10 \\
            \midrule
            $\hat{\lambda}_{\text{suffix}}$ & 0.02 & 0.04 & 0.003 & 0.00 & 0.00 \\
            $\hat{\lambda}_{\text{all}}$ & 0.26 & 0.43 & 0.14 & 0.01 & 0.01\\
            \midrule
            $\sigma_{\Delta \text{MSE}}$ & 2.58 & 1.30 & 1.19 & 3.16 & 1.52  \\
            & $\times 10^{3}$ & $\times 10^{3}$ & $\times 10^{2}$ & $\times 10^1$ & $\times 10^{1}$ \\
            $R_{\text{suffix}}^2$ & \textbf{97.52} & \textbf{99.75} & \textbf{99.98} & \textbf{99.99} & \textbf{99.99} \\
            $R_{\text{all}}^2$ & 97.52 & 99.75 & 99.98 & 99.99 & 99.99 \\
            \bottomrule
            \end{tabular}
        \end{subtable}

    \end{minipage}
    \end{tabular}

\end{table}

Across all experiments, using a basis of signature components associated with suffixes yields predictive performance comparable to that obtained with the full feature set, while often leading to marginal but consistent improvements. We also observe that the cross-validated regularization parameter $\lambda$ is typically much smaller when using the basis of signature features, and is null in some cases. This is consistent with reduced feature redundancy and improved numerical conditioning. The conclusions are consistent across different choices of the true regression coefficient and across different choices of stochastic processes. These results support the use of the basis of signature components associated with suffixes as a dimensionality reduction strategy in signature regression, particularly in regimes where computational cost and numerical stability are of primary concern.

\section{Proofs}
\label{part3}

\subsection{Proof of Theorem \ref{main result -1}}

Let us define the linear map $\varphi : \text{Span}(\mathcal{W}_{\le N}) \ni  \sum_{\word{w} \in \mathcal{W}_{\le N}} c_{\word{w}} \word{w} \longmapsto \sum_{\word{w} \in \mathcal{W}_{\le N}} c_{\word{w}} \word{w}\shuffle \word{0}_{N - \|\word{w}\|} \in \text{Span}(\mathcal{W}_{N})$. Let $B \subset \mathcal{W}_{\le N}$ be a subset of words. Since we have $\mathcal{W}_{\le N}= \bigsqcup_{\word{\gamma} \in \mathcal{PW}_{\le N}} \mathcal{W}_{\le N}(\word{\gamma})$ and $\mathcal{W}_{N}= \bigsqcup_{\word{\gamma} \in \mathcal{PW}_{\le N}} \mathcal{W}_{N}(\word{\gamma})$, we obtain 

\begin{align}
\label{decomposition proof 1 2}
\text{Span}(B) = \bigoplus_{\gamma \in \mathcal{PW}_{\le N}} \text{Span}(B \cap{\mathcal{W}}_{\le N}(\word{\gamma})) \quad \text{and} \quad \text{Span}(\mathcal{W}_{N}) = \bigoplus_{\word{\gamma} \in \mathcal{PW}_{\le N}} \text{Span}(\mathcal{W}_{N}(\word{\gamma})).
\end{align}
Moreover it is easy to see that for any pure word $\word{\gamma} \in \mathcal{PW}_{\le N}$, we have

\begin{align}
\label{projection}
    \varphi(\text{Span}(B \cap \mathcal{W}_{\le N}(\word{\gamma}))) \subset\text{Span}(\mathcal{W}_{N}(\word{\gamma})).
\end{align}
Using \eqref{decomposition proof 1 2} and \eqref{projection}, we obtain that $\varphi(\text{Span}(B))=\text{Span}(\mathcal{W}_N)$ if and only if for each pure word $\word{\gamma} \in \mathcal{PW}_{\le N}$, $\varphi(\text{Span}(B \cap \mathcal{W}_{\le N}(\word{\gamma}))) = \text{Span}(\mathcal{W}_{N}(\word{\gamma}))$.
Since $B$ is a basis of words for $\mathcal{W}_N$ (resp. $B \cap \mathcal{W}_{\le N}(\word{\gamma})$ is a basis of words for $\mathcal{W}_{N}(\word{\gamma})$) if and only if $\varphi(\text{Span}(B))=\text{Span}(\mathcal{W}_N)$ (resp. $\varphi(\text{Span}(B \cap \mathcal{W}_{\le N}(\word{\gamma})))=\text{Span}(\mathcal{W}_N(\word{\gamma}))$), we conclude.

\subsection{Proof of Theorem \ref{main result 0}}

The proof of Theorem \ref{main result 0} relies on the following lemma.
\begin{lemma}
\label{lemme liberté linéaire N-1}
Let $N \in \mathbb{N}$ be a truncation order and $B \subset \mathcal{W}_{\le N}$ be a subset of words. If $(\word{w} \shuffle \word{0}_{N - \|\word{w}\|})_{\word{w} \in B}$ is a linearly independent family of elements of $\text{Span}(\mathcal{W}_{N})$, then for all $m \in \{0,\cdots,N-1\}$, $(\word{w} \shuffle \word{0}_{m- \|\word{w}\|})_{\word{w} \in B \cap \mathcal{W}_{\le m}}$ is a linearly independent family of elements of $\text{Span}(\mathcal{W}_{m})$.
\end{lemma}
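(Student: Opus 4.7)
I would prove the contrapositive. Suppose that for some $m \in \{0,\ldots,N-1\}$ there exists a nontrivial collection of coefficients $(c_{\word{w}})_{\word{w} \in B \cap \mathcal{W}_{\le m}}$ with
$$\sum_{\word{w} \in B \cap \mathcal{W}_{\le m}} c_{\word{w}}\, \word{w} \shuffle \word{0}_{m - \|\word{w}\|} = 0$$
in $\text{Span}(\mathcal{W}_m)$. The idea is to shuffle this identity on the right by $\word{0}_{N-m}$ and transport the relation up into $\text{Span}(\mathcal{W}_N)$ while preserving the nontriviality of the coefficients, thereby contradicting the hypothesis that $(\word{w} \shuffle \word{0}_{N-\|\word{w}\|})_{\word{w} \in B}$ is linearly independent.

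\textbf{Key computation.} By bilinearity and associativity of the shuffle product on $\text{Span}(\mathcal{W}_{<\infty})$,
$$(\word{w} \shuffle \word{0}_{m-\|\word{w}\|}) \shuffle \word{0}_{N-m} = \word{w} \shuffle (\word{0}_{m-\|\word{w}\|} \shuffle \word{0}_{N-m}).$$
The next step is the elementary identity
$$\word{0}_a \shuffle \word{0}_b = \binom{a+b}{a}\, \word{0}_{a+b}, \qquad a,b \in \mathbb{N},$$
which I would establish by a short induction on $a+b$ directly from Definition \ref{def shuffle product} (or equivalently by observing that the $\binom{a+b}{a}$ interleavings of two all-$\word{0}$ words all produce $\word{0}_{a+b}$). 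Applying this with $a = m - \|\word{w}\|$ and $b = N-m$ gives
$$(\word{w} \shuffle \word{0}_{m-\|\word{w}\|}) \shuffle \word{0}_{N-m} = \binom{N-\|\word{w}\|}{m-\|\word{w}\|}\, \word{w} \shuffle \word{0}_{N-\|\word{w}\|}.$$

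\textbf{Conclusion.} Shuffling the assumed identity by $\word{0}_{N-m}$ therefore produces
$$\sum_{\word{w} \in B \cap \mathcal{W}_{\le m}} c_{\word{w}} \binom{N-\|\word{w}\|}{m-\|\word{w}\|}\, \word{w} \shuffle \word{0}_{N-\|\word{w}\|} = 0$$
in $\text{Span}(\mathcal{W}_N)$. Since $\|\word{w}\| \le m \le N$, every binomial coefficient appearing is a strictly positive integer, so the rescaled coefficients $c_{\word{w}} \binom{N-\|\word{w}\|}{m-\|\word{w}\|}$ inherit the nontriviality of the $c_{\word{w}}$. This yields a nontrivial vanishing linear combination of the subfamily $(\word{w} \shuffle \word{0}_{N-\|\word{w}\|})_{\word{w} \in B \cap \mathcal{W}_{\le m}}$ of $(\word{w} \shuffle \word{0}_{N-\|\word{w}\|})_{\word{w} \in B}$, contradicting the linear independence hypothesis. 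The only non-formal ingredient is the all-$\word{0}$ shuffle identity above; associativity and commutativity of $\shuffle$ are used as standard algebraic facts, and there is no serious obstacle.
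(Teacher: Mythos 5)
Your proof is correct and follows essentially the same route as the paper: the key identity $(\word{w} \shuffle \word{0}_{m - \|\word{w}\|}) \shuffle \word{0}_{N-m} = \binom{N - \|\word{w}\|}{N - m}\, \word{w} \shuffle \word{0}_{N-\| \word{w}\|}$ (your binomial coefficient $\binom{N-\|\word{w}\|}{m-\|\word{w}\|}$ is the same number by symmetry) obtained from associativity of the shuffle, together with positivity of that coefficient, is exactly what the paper invokes. Your write-up merely makes explicit the contrapositive framing and the auxiliary fact $\word{0}_a \shuffle \word{0}_b = \binom{a+b}{a}\word{0}_{a+b}$ that the paper leaves implicit.
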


This lemma is an easy consequence of the formula $(\word{w} \shuffle \word{0}_{m - \|\word{w}\|}) \shuffle \word{0}_{N-m} = \binom{N - \|\word{w}\|}{N - m} \word{w} \shuffle \word{0}_{N-\| \word{w}\|}$ which follows from the associativity of the shuffle product. We can now provide the proof of Theorem \ref{main result 0}.

\begin{proof}[Proof of Theorem \ref{main result 0}]

Let $N \in \mathbb{N}$ be a truncation order, $\word{\gamma} \in \mathcal{PW}_{\le N}$ be a pure word, and $B_{\word{\gamma}} \subset \mathcal{W}_{\le N}(\word{\gamma})$ be a basis of words for $\mathcal{W}_{N}(\word{\gamma})$.
\newline
\newline
\textbf{First step: $\text{Card}(B_{\word{\gamma}}) = \binom{N}{\|\word{\gamma}\|}$.}
\newline
Since $B_{\word{\gamma}}$ is a basis of words for $\mathcal{W}_{N}(\word{\gamma})$, it follows that $(\word{w} \shuffle \word{0}_{N - \| \word{w}\|})_{\word{w} \in B_{\word{\gamma}}}$ is a basis of $\text{Span}(\mathcal{W}_{N}(\word{\gamma}))$. Hence we obtain the cardinality equality: $\text{Card}(B_{\word{\gamma}}) = \dim(\text{Span}(\mathcal{W}_{N}(\word{\gamma}))) = \text{Card}(\mathcal{W}_{N}(\word{\gamma})) = \binom{N}{\| \word{\gamma}\|}$.
\newline
\newline
\textbf{Second step: $\forall m \in \{ \|\word{\gamma}\|,\cdots,N-1\}, \text{Card}(B_{\word{\gamma}} \cap \mathcal{W}_{\le m}) \le \binom{m}{\| \word{\gamma}\|}$.}
\newline
We assume that $\|\word{\gamma}\| \le N-1$. If $B_{\word{\gamma}}$ is a basis of words for $\mathcal{W}_N(\word{\gamma})$, then $(\word{w} \shuffle \word{0}_{N - \|\word{w}\|})_{\word{w} \in B_{\word{\gamma}}}$ is a linearly independent family of elements of $\text{Span}(\mathcal{W}_{N}(\word{\gamma}))$. By  Lemma \ref{lemme liberté linéaire N-1}, we get that for all $m \in \{\|\word{\gamma}\|,\cdots,N-1\}$, $(\word{w} \shuffle \word{0}_{m - \|\word{w}\|})_{\word{w} \in B_{{\word{\gamma}}} \cap \mathcal{W}_{\le m}}$ is a linearly independent family of elements of $\text{Span}(\mathcal{W}_{m}(\word{\gamma}))$. Then for all $m \in \{ \|\word{\gamma}\|,\cdots,N-1\}$ we have $\text{Card}(B_{\word{\gamma}} \cap \mathcal{W}_{\le m}) \le \dim(\text{Span}(\mathcal{W}_{m}(\word{\gamma}))) = \binom{m}{\|\word{\gamma}\|}$.

\end{proof}

\subsection{Proof of Theorem \ref{main result 1}}

The proof of Theorem \ref{main result 1} relies on the following lemma, the proof of which is postponed after the end of its proof.

\begin{lemma}
\label{lemme theorem 1}
Let $\tilde{N} \in \mathbb{N}$. Then for any $k_1,k_2 \in \mathbb{N}$ such that $k_1 + k_2 = \tilde{N}$ and any $M \in \mathbb{N}$, we have

\begin{enumerate}[label=(\roman*)]
    \item $\forall \word{w} \in \mathcal{W}_{M}^{*}, \quad \word{w}\word{0}_{k_1} \shuffle \word{0}_{k_2} = \binom{\tilde{N}}{k_1} \word{w} \word{0}_{\tilde{N}} + \sum_{k=M+1}^{M+k_2} \sum_{\word{v} \in \mathcal{W}_{k}^{*}} c_{\word{v}} \word{v}\word{0}_{\tilde{N}-k+M}$, with $c_{\word{v}} \in \mathbb{N}$ for each word $\word{v} \in \bigcup_{k=M+1}^{M+k_2} \mathcal{W}_{k}^{*}$.
    \item $\forall \word{w} \in {}^{*}\mathcal{W}_{M}, \quad \word{0}_{k_1}\word{w} \shuffle \word{0}_{k_2} = \binom{\tilde{N}}{k_1} \word{0}_{\tilde{N}}\word{w} + \sum_{k=M+1}^{M+k_2} \sum_{\word{v} \in {}^{*}\mathcal{W}_{k}} c_{\word{v}} \word{0}_{\tilde{N}-k+M}\word{v}$, with $c_{\word{v}} \in \mathbb{N}$ for each word $\word{v} \in \bigcup_{k=M+1}^{M+k_2} {}^{*}\mathcal{W}_{k}$.
\end{enumerate}

\end{lemma}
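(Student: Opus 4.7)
The plan is to prove (i) by a direct combinatorial analysis of the shuffle expansion, then deduce (ii) through a word-reversal argument. By the combinatorial definition of the shuffle product, $\word{w}\word{0}_{k_1} \shuffle \word{0}_{k_2}$ equals the formal sum over all $\binom{M+\tilde N}{k_2}$ interleavings of the two factors that preserve the internal order of letters of each. Every resulting monomial has length $M + \tilde N$, contains the non-zero letters of $\word{w}$ in their original order, and appears with a non-negative integer coefficient.

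The central observation is that since $\word{w} \in \mathcal{W}_M^*$, its last letter is non-zero, and the shuffle preserves the relative order of letters of $\word{w}\word{0}_{k_1}$. Consequently, the $k_1$ zeros originally following $\word{w}$ still lie after the last non-zero letter in every resulting word. Each term therefore admits a unique decomposition $\word{v}\word{0}_\ell$ with $\word{v} \in \mathcal{W}_{\|\word{v}\|}^{*}$ and $\ell \geq k_1$; equivalently $\|\word{v}\| \in \{M, M+1, \ldots, M+k_2\}$.

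I would then partition the interleavings by $k := \|\word{v}\|$. When $k = M$, the word $\word{v}$ must coincide with $\word{w}$: the $M$ letters of $\word{w}$ then occupy the first $M$ positions of the result with no zero from $\word{0}_{k_2}$ inserted among them, and the remaining freedom consists in interleaving the $k_1$ trailing zeros of $\word{w}\word{0}_{k_1}$ with the $k_2$ zeros of $\word{0}_{k_2}$ among the final $\tilde N$ positions, which yields exactly $\binom{\tilde N}{k_1}$ interleavings and produces the main term $\binom{\tilde N}{k_1}\word{w}\word{0}_{\tilde N}$. When $k \in \{M+1,\ldots,M+k_2\}$, the remaining contributions take the form $c_{\word{v}}\word{v}\word{0}_{\tilde N - k + M}$ with $\word{v} \in \mathcal{W}_k^*$ and $c_{\word{v}} \in \mathbb{N}$, giving exactly the shape claimed in (i). Part (ii) then follows by applying (i) to $\tau(\word{w}) \in \mathcal{W}_M^*$, where $\tau$ denotes the word-reversal operator: one readily checks that $\tau(\word{u} \shuffle \word{v}) = \tau(\word{u}) \shuffle \tau(\word{v})$ and $\tau(\word{0}_n) = \word{0}_n$, and that $\tau$ bijects $\mathcal{W}_k^*$ onto ${}^*\mathcal{W}_k$; reversing both sides of (i) written for $\tau(\word{w})$ therefore yields (ii) with non-negative integer coefficients.

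The main difficulty is purely combinatorial bookkeeping; no deeper algebraic or analytic input is required. The points to verify carefully are (a) the uniqueness of the decomposition $\word{v}\word{0}_\ell$ (enforced by requiring $\word{v}$ to end in a non-zero letter), (b) the bound $\ell \geq k_1$ that forces $\|\word{v}\| \leq M + k_2$ and thereby determines the range of summation, and (c) the explicit count of $\binom{\tilde N}{k_1}$ interleavings contributing to the main term $\word{w}\word{0}_{\tilde N}$.
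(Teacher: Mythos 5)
Your proof is correct, but it takes a genuinely different route from the paper. The paper proves (i) by induction on $\tilde N$ using the recursive definition of the shuffle, splitting into the cases $k_2=0$, $k_1=0$ and $k_1,k_2>0$, and obtaining the coefficient $\binom{\tilde N}{k_1}$ from Pascal's identity $\binom{\tilde N}{k_1}+\binom{\tilde N}{k_1-1}=\binom{\tilde N+1}{k_1}$; it then proves (ii) by repeating the whole induction with the mirrored recursion $(\word{iw})\shuffle(\word{jv})=\word{i}(\word{w}\shuffle\word{jv})+\word{j}(\word{iw}\shuffle\word{v})$. You instead argue directly on the interleaving description of the shuffle, classifying each resulting word by the length of its maximal prefix ending in a non-zero letter, and you identify the coefficient of $\word{w}\word{0}_{\tilde N}$ as the number of interleavings in which no letter of $\word{0}_{k_2}$ falls before the last letter of $\word{w}$ — your key observations (a)–(c) are exactly the points that make this count equal to $\binom{\tilde N}{k_1}$ and force the remaining terms into the claimed shape, and they all check out. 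Your derivation of (ii) from (i) via the reversal map $\tau$ is also valid, since $\tau$ is an automorphism of the shuffle algebra, fixes $\word{0}_n$, and exchanges $\mathcal{W}_k^{*}$ with ${}^{*}\mathcal{W}_k$; this is cleaner than redoing the induction. What your approach buys is brevity, an explicit combinatorial meaning for the coefficients $c_{\word{v}}$, and (ii) for free by symmetry; what it costs is that it rests on the equivalence between the recursive definition of the shuffle (Definition \ref{def shuffle product}) and the interleaving description, which the paper only states informally — you should either cite or verify that equivalence. Two small points to add for completeness: the case $M=0$, where $\word{w}=\emptyword$ has no last letter, must be treated separately (it is trivial, since $\word{0}_{k_1}\shuffle\word{0}_{k_2}=\binom{\tilde N}{k_1}\word{0}_{\tilde N}$), and in step (c) you should say explicitly that an interleaving which inserts at least one letter of $\word{0}_{k_2}$ before the last letter of $\word{w}$ produces a word whose maximal non-$\word{0}$-terminated prefix has length strictly greater than $M$, hence, by uniqueness of the decomposition $\word{v}\word{0}_{\ell}$, cannot contribute to the monomial $\word{w}\word{0}_{\tilde N}$.
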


We can now provide the proof of Theorem \ref{main result 1}.

\begin{proof}[Proof of Theorem \ref{main result 1}]

Let $N\in \mathbb{N}$ be a truncation order and $\word{\gamma} \in \mathcal{PW}_{\le N}$.  
\newline
\newline
\textbf{First step: Proof of (i).}
\newline
Let $(m_{\word{w}})_{\word{w} \in \mathcal{W}_{\le N}^{*}(\word{\gamma})}$ be a family of non-negative integers such that for each prefix $\word{w} \in \mathcal{W}_{\le N}^{*}(\word{\gamma})$, $\|\word{w}\| + m_{\word{w}} \le N$. Then $N - \|\word{w}\| - m_{\word{w}} \in \mathbb{N}$. Using (i) of Lemma \ref{lemme theorem 1} applied with $M = \|\word{w}\|$, $\tilde{N}=N - \|\word{w}\|$, $k_1 = m_{\word{w}}$ and $k_2 = N - m_{\word{w}} - \|\word{w}\|$, we have

\begin{align*}
\word{w}\word{0}_{m_{\word{w}}} \shuffle \word{0}_{N - \|\word{w}\| - m_{\word{w}}} &= \binom{N - \| \word{w} \|}{m_{\word{w}}} \word{w}\word{0}_{N - \|\word{w}\|} + \sum_{k=\|\word{w}\|+1}^{N - m_{\word{w}}} \sum_{\word{v} \in \mathcal{W}_{k}^{*}} c_{\word{v}} \word{v}\word{0}_{N -k}.
\end{align*}
The shuffle $\word{w}\word{0}_{m_{\word{w}}} \shuffle \word{0}_{N - \|\word{w}\| - m_{\word{w}}}$ lies in $\text{Span}(\mathcal{W}_{N}(\word{\gamma}))$, then $c_{\word{v}}=0$ whenever $\word{v} \in \mathcal{W}_{\le N - m_{\word{w}}}^{*} \setminus \mathcal{W}_{\le N - m_{\word{w}}}^{*}(\word{\gamma})$ . We can rewrite 

\begin{align*}
\word{w}\word{0}_{m_{\word{w}}} \shuffle \word{0}_{N - \|\word{w}\| - m_{\word{w}}} &= \binom{N - \| \word{w} \|}{m_{\word{w}}} \word{w}\word{0}_{N - \|\word{w}\|} + \sum_{k=\|\word{w}\|+1}^{N - m_{\word{w}}} \sum_{\word{v} \in \mathcal{W}_{k}^{*}(\word{\gamma})} c_{\word{v}} \word{v}\word{0}_{N -k}.
\end{align*}
Then if we sort the words $\word{v} \in \mathcal{W}_{\le N}^{*}(\word{\gamma})$ according to the lexicographic order on $(\|\word{v}\|,\word{v})$, there exists an upper triangular matrix $(P_{\word{w},\word{v}})_{\word{w},\word{v} \in \mathcal{W}_{\le N}^{*}(\word{\gamma})}$  with diagonal coefficients equal to $\binom{N - \| \word{w} \|}{m_{\word{w}}} > 0$ such that $(\word{w}\word{0}_{m_{\word{w}}} \shuffle \word{0}_{N - \|\word{w}\| - m_{\word{w}}})_{\word{w} \in \mathcal{W}_{\le N}^{*}(\word{\gamma})} = P \cdot (\word{v}\word{0}_{N - \| \word{v}\|})_{\word{v} \in \mathcal{W}_{\le N}^{*}(\word{\gamma})}$. Since $\{ \word{v} \word{0}_{N - \| \word{v} \|} ~|~ \word{v} \in \mathcal{W}_{\le N}^{*}(\word{\gamma})\} = \mathcal{W}_{N}(\word{\gamma})$, we obtain that $(\word{w}\word{0}_{m_{\word{w}}} \shuffle \word{0}_{N - \|\word{w}\| - m_{\word{w}}})_{\word{w} \in \mathcal{W}_{\le N}^{*}(\word{\gamma})}$ is a basis of $\text{Span}(\mathcal{W}_{N}(\word{\gamma}))$. Therefore, by definition, $\left\{ \word{w}\word{0}_{m_{\word{w}}} ~|~ \word{w} \in \mathcal{W}^{*}_{\le N}(\word{\gamma}) \right\}$ is a basis of words for $\mathcal{W}_{N}(\word{\gamma})$.
\newline
\newline
\textbf{Second step: Proof of (ii).}
\newline
We conclude by a similar argument replacing Lemma \ref{lemme theorem 1} (i) by Lemma \ref{lemme theorem 1} (ii).
\end{proof}

We now provide the proof of Lemma \ref{lemme theorem 1}.

\begin{proof}
\textbf{First step: Proof of (i).}
\newline
We prove this result by induction on $\tilde{N} \in \mathbb{N}$. 
\newline
\\
\underline{Initialization step.} Let $\tilde{N}=0$, $M \in \mathbb{N}$ and $\word{w} \in \mathcal{W}_{M}^{*}$. The result follows trivially since $k_1=k_2=0$ which implies that $\word{0}_{k_1} = \word{0}_{k_2} = \emptyword$.
\newline
\\
\underline{Induction step.} We assume that the property is true for $\tilde{N} \in \mathbb{N}$. Let $M \in \mathbb{N}$, $\word{w} \in \mathcal{W}_{M}^{*}$ and $k_1,k_2 \in \mathbb{N}$ be such that $k_1+k_2 =\tilde{N}+1$.
\newline
\newline
\textbf{First case: $k_2 = 0$.}
\newline
We then have $k_1 = \tilde{N}+1$ and $\word{0}_{k_2}= \emptyword$, then $\word{w} \word{0}_{k_1} \shuffle \word{0}_{k_2} = \word{w} \word{0}_{\tilde{N}+1} \shuffle \emptyword =  \word{w} \word{0}_{\tilde{N}+1}$.
\newline
\newline
\textbf{Second case: $k_1 = 0$.}
\newline
Then $k_2 = \tilde{N}+1$. If $M=0$, then $\word{w} = \emptyword$ and $\emptyword \shuffle \word{0}_{\tilde{N}+1} = \word{0}_{\tilde{N}+1}$. If $M \ge 1$, then there exist $\word{\tilde{w}} \in \mathcal{W}_{M-1}$ and a letter $\word{i} \in \{\word{1},\cdots,\word{d}\}$ such that $\word{w}=\word{\tilde{w} i}$. Using the recursive property of the shuffle product (see Definition \ref{def shuffle product}) we can write 

\begin{align*}
    \word{w} \shuffle \word{0}_{\tilde{N}+1} &= ( \word{w} \shuffle \word{0}_{\tilde{N}})\word{0} +  (\word{\tilde{w}} \shuffle \word{0}_{\tilde{N}+1})\word{i}.
\end{align*}
Using the induction hypothesis we can write

\begin{align*}
    \word{w} \shuffle \word{0}_{\tilde{N}+1} &= \left( \word{w} \word{0}_{\tilde{N}} + \sum_{k=M+1}^{M+\tilde{N}} \sum_{\word{v} \in \mathcal{W}^{*}_{k}} c_{1,\word{v}} \word{v} \word{0}_{\tilde{N}-k+M}\right)\word{0} +  \left(\word{\tilde{w}} \word{0}_{\tilde{N}+1} + \sum_{k=M}^{M+\tilde{N}} \sum_{\word{v} \in \mathcal{W}_{k}^{*}} c_{2,\word{v}} \word{v} \word{0}_{\tilde{N}-k+M} \right)\word{i},
\end{align*}
with $c_{1,\word{v}} \in \mathbb{N}$ for each word $\word{v}  \in \bigcup_{k=M+1}^{M+\tilde{N}}\mathcal{W}_{k}^{*}$ and $c_{2,\word{v}} \in \mathbb{N}$ for each word $\word{v} \in \bigcup_{k=M}^{M+\tilde{N}} \mathcal{W}_{k}^{*}$. Hence we have

\begin{align*}
   \word{w} \shuffle \word{0}_{\tilde{N}+1} &=  \word{w} \word{0}_{\tilde{N}+1} + \sum_{k=M+1}^{M+\tilde{N}} \sum_{\word{v} \in \mathcal{W}^{*}_{k}} c_{1,\word{v}} \word{v} \word{0}_{\tilde{N}+1-k+M} +  \word{\tilde{w}} \word{0}_{\tilde{N}+1} \word{i} + \sum_{k=M}^{M+\tilde{N}} \sum_{\word{v} \in \mathcal{W}_{k}^{*}} c_{2,\word{v}} \word{v} \word{0}_{\tilde{N}-k+M} \word{i}.
\end{align*}
Moreover we know that $\word{\tilde{w}} \word{0}_{\tilde{N}+1} \word{i} \in \mathcal{W}^{*}_{M + \tilde{N}+1}$ and $\word{v} \word{0}_{\tilde{N}-k+M} \word{i} \in \mathcal{W}^{*}_{M + \tilde{N}+1}$ for each $\word{v} \in \mathcal{W}^{*}_{k}$ for $k \in \{M,\cdots,M+\tilde{N}\}$. Then there exists a family of non-negative integers $(c_{3,\word{v}})_{\word{v} \in \mathcal{W}^{*}_{M+ \tilde{N}+1}}$ such that $\word{\tilde{w}} \word{0}_{\tilde{N}+1} \word{i} + \sum_{k=M}^{M+\tilde{N}} \sum_{\word{v} \in \mathcal{W}_{k}^{*}} c_{2,\word{v}} \word{v} \word{0}_{\tilde{N}-k+M} \word{i} = \sum_{\word{v} \in \mathcal{W}^{*}_{M+\tilde{N}+1}} c_{3,\word{v}} \word{v}$. Then it follows that

\begin{align*}
    \word{w} \shuffle \word{0}_{\tilde{N}+1} &=  \word{w} \word{0}_{\tilde{N}+1} + \sum_{k=M+1}^{M+\tilde{N}} \sum_{\word{v} \in \mathcal{W}^{*}_{k}} c_{1,\word{v}} \word{v} \word{0}_{\tilde{N}+1-k+M} +  \sum_{\word{v} \in \mathcal{W}^{*}_{M+\tilde{N}+1}} c_{3,\word{v}} \word{v}.
\end{align*}
If we define $c_{\word{v}}=c_{1,\word{v}}$ if $\word{v} \in \bigcup_{k=M+1}^{M+\tilde{N}} \mathcal{W}^{*}_{k}$ and $c_{\word{v}}=c_{3,\word{v}}$ if $\word{v} \in \mathcal{W}^{*}_{M+\tilde{N}+1}$, we get

\[
\word{w} \shuffle \word{0}_{\tilde{N}+1} = \word{w} \word{0}_{\tilde{N}+1} + \sum_{k=M+1}^{M+\tilde{N}+1} \sum_{\word{v} \in \mathcal{W}^{*}_{k}} c_{\word{v}} \word{v} \word{0}_{\tilde{N}+1-k+M}.
\]
\newline
\textbf{Third case: $k_1 > 0$ and $k_2 > 0$.}
\newline
Using the recursive property of the shuffle product property (see Definition \ref{def shuffle product}) we can write

\begin{align*}
    \word{w} \word{0}_{k_1} \shuffle \word{0}_{k_2} &= (\word{w} \word{0}_{k_1} \shuffle \word{0}_{k_2 - 1})\word{0} + (\word{w} \word{0}_{k_1-1} \shuffle \word{0}_{k_2})\word{0}.
\end{align*}
Using the induction hypothesis, we can write 

\begin{align*}
    \word{w} \word{0}_{k_1} \shuffle \word{0}_{k_2} &= \left(\binom{\tilde{N}}{k_1} \word{w} \word{0}_{\tilde{N}} + \sum_{k=M+1}^{M+k_2 - 1} \sum_{\word{v} \in \mathcal{W}_{k}^{*}} c_{1,\word{v}} \word{v}\word{0}_{\tilde{N}-k+M}\right)\word{0} \\
& \qquad + \left(\binom{\tilde{N}}{k_1 -1} \word{w} \word{0}_{\tilde{N}} + \sum_{k=M+1}^{M+k_2} \sum_{\word{v} \in \mathcal{W}_{k}^{*}} c_{2,\word{v}} \word{v}\word{0}_{\tilde{N}-k+M}\right)\word{0},
\end{align*}
with $c_{1,\word{v}} \in \mathbb{N}$ for each word $\word{v} \in \bigcup_{k=M+1}^{M+k_2 -1} \mathcal{W}_{k}^{*}$, and $c_{2,\word{v}} \in \mathbb{N}$ for each word $\word{v}  \in \bigcup_{k=M+1}^{M+k_2} \mathcal{W}_{k}^{*}$. We extend the first sum to $k=M+k_2$ by taking $c_{1,\word{v}}=0$ for each $\word{v} \in \mathcal{W}^{*}_{M+k_2}$. We then obtain

\begin{align*}
\word{w} \word{0}_{k_1} \shuffle \word{0}_{k_2} &= \left( \binom{\tilde{N}}{k_1} + \binom{\tilde{N}}{k_1 -1} \right) \word{w}\word{0}_{\tilde{N}}\word{0} + \sum_{k=M+1}^{M+k_2} \sum_{\word{v} \in \mathcal{W}_{k}^{*}}(c_{1,\word{v}}+c_{2,\word{v}}) \word{v}\word{0}_{\tilde{N}-k+M}\word{0} \\
&= \binom{\tilde{N}+1}{k_1} \word{w}\word{0}_{\tilde{N}+1} + \sum_{k=M+1}^{M+k_2} \sum_{\word{v} \in \mathcal{W}_{k}^{*}}(c_{1,\word{v}}+c_{2,\word{v}}) \word{v}\word{0}_{\tilde{N}+1-k+M},
\end{align*}
and $c_{1,\word{v}} + c_{2,\word{v}} \in \mathbb{N}$ for each word $\word{v}  \in \bigcup_{k=M+1}^{M+k_2} \mathcal{W}_{k}^{*}$, which finishes the proof.
\newline
\newline
\textbf{Second step: Proof of (ii).}
\newline
The main tool for the first step is the recursive formula of the shuffle product introduced in Definition \ref{def shuffle product}. The proof of (ii) is the same up to the replacement of this formula by the equivalent recursive formula (see \citep{lothaire1997combinatorics} for more details): for all $\word{w},\word{v} \in \mathcal{W}_{<\infty}$ and $\word{i},\word{j} \in \mathcal{A}$

\begin{align*}
\begin{cases}
\word{w}\shuffle\emptyword=\emptyword \shuffle \word{w} = \word{w} \\
(\word{i w}) \shuffle (\word{j v}) = \word{i}(\word{w} \shuffle \word{j v}) + \word{j}(\word{i w} \shuffle \word{v}).
\end{cases}
\end{align*}

\end{proof}

\subsection{Proof of the statement in Remark \ref{remark contre exemple}}
\label{preuve remark}

The proof of the statement relies on the following lemma.

\begin{lemma}
\label{lemma statement}

For each truncation order $N \in \mathbb{N}^*$ and each word $\word{\tilde{w}} \in \mathcal{W}_{N+1}(\word{i})$, the set $\mathcal{W}_{N}(\word{i}) \cup \{ \word{\tilde{w}}\}$ is a basis of words for $\mathcal{W}_{N+1}(\word{i})$
\end{lemma}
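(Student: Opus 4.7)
The plan is to parametrize the elements of $\mathcal{W}_m(\word{i})$ for $m \in \{N,N+1\}$ by the position of the unique letter $\word{i}$, writing $w_a^{(m)} := \word{0}_a \word{i} \word{0}_{m-1-a}$ for $a \in \{0,\ldots,m-1\}$; this gives a natural basis of $\text{Span}(\mathcal{W}_m(\word{i}))$ of cardinality $m$. The first step is an explicit computation of $w_a^{(N)} \shuffle \word{0}$: the shuffle enumerates the $N+1$ positions at which the extra $\word{0}$ may be inserted into $w_a^{(N)}$, and inserting it in any of the $a+1$ positions before the $\word{i}$ produces $w_{a+1}^{(N+1)}$, while inserting it in any of the $N-a$ positions after the $\word{i}$ produces $w_a^{(N+1)}$. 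Hence
\begin{equation*}
w_a^{(N)} \shuffle \word{0} \;=\; (a+1)\,w_{a+1}^{(N+1)} \;+\; (N-a)\,w_a^{(N+1)}.
\end{equation*}

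Next I would show that the linear map $\Psi : \text{Span}(\mathcal{W}_N(\word{i})) \to \text{Span}(\mathcal{W}_{N+1}(\word{i}))$ defined by $w_a^{(N)} \mapsto w_a^{(N)} \shuffle \word{0}$ is injective. Its matrix in the bases $(w_a^{(N)})_{0 \le a \le N-1}$ and $(w_k^{(N+1)})_{0 \le k \le N}$ is bidiagonal, and its leading $N \times N$ submatrix is lower triangular with diagonal entries $N, N-1, \ldots, 1$, hence of nonzero determinant. Therefore $\text{Im}(\Psi)$ is an $N$-dimensional hyperplane in the $(N+1)$-dimensional ambient space $\text{Span}(\mathcal{W}_{N+1}(\word{i}))$, and the remaining task is to prove that none of the $w_b^{(N+1)}$, $b \in \{0,\ldots,N\}$, lies inside this hyperplane.

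The key step is to exhibit a linear functional $L$ on $\text{Span}(\mathcal{W}_{N+1}(\word{i}))$ that vanishes on $\text{Im}(\Psi)$ but is nonzero on every $w_b^{(N+1)}$. The condition $L \circ \Psi = 0$ forces the two-term recurrence $L(w_{a+1}^{(N+1)}) = -\tfrac{N-a}{a+1} L(w_a^{(N+1)})$, which under the normalization $L(w_0^{(N+1)}) = 1$ integrates into $L(w_k^{(N+1)}) = (-1)^k \binom{N}{k}$. Since every binomial coefficient $\binom{N}{k}$ for $k \in \{0,\ldots,N\}$ is nonzero, this $L$ is the desired certificate, and no $w_b^{(N+1)}$ lies in $\text{Im}(\Psi)$. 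The main obstacle is the sign and coefficient bookkeeping in the recurrence, but exhibiting the alternating binomial functional avoids inverting the full inhomogeneous system and is the most economical route.

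Combining the two points, for any $\word{\tilde{w}} \in \mathcal{W}_{N+1}(\word{i})$ the $N+1$ vectors $(w_a^{(N)} \shuffle \word{0})_{0 \le a \le N-1}$ together with $\word{\tilde{w}}$ are linearly independent in the $(N+1)$-dimensional space $\text{Span}(\mathcal{W}_{N+1}(\word{i}))$, and hence form a basis of it. By the definition of a basis of words, this shows that $\mathcal{W}_N(\word{i}) \cup \{\word{\tilde{w}}\}$ is a basis of words for $\mathcal{W}_{N+1}(\word{i})$, which concludes the proof.
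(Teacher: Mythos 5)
Your proof is correct, and it takes a genuinely different route from the paper's. Both arguments start from the same explicit computation $\word{0}_a\word{i}\word{0}_{N-1-a}\shuffle\word{0}=(a+1)\,\word{0}_{a+1}\word{i}\word{0}_{N-1-a}+(N-a)\,\word{0}_a\word{i}\word{0}_{N-a}$, but they diverge afterwards. The paper assembles the full $(N+1)\times(N+1)$ matrix obtained by appending to the bidiagonal shuffle matrix the indicator row of $\word{\tilde{w}}=\word{0}_{k^*-1}\word{i}\word{0}_{N+1-k^*}$, expands the determinant along that appended row, and checks that the resulting $N\times N$ minor is block diagonal with triangular blocks having nonzero diagonal entries, so the determinant is nonzero; this requires a (mild) case analysis on the position $k^*$. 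You instead observe that the shuffle-by-$\word{0}$ map has rank $N$ (via the same triangular minor with diagonal $N,N-1,\dots,1$ — whether you call it upper or lower triangular is only a matter of row/column convention and is immaterial), and then solve the two-term recurrence $L(w_{a+1}^{(N+1)})=-\tfrac{N-a}{a+1}L(w_a^{(N+1)})$ to produce the explicit annihilating functional $L(w_k^{(N+1)})=(-1)^k\binom{N}{k}$ of the image. Since this functional is nonzero on every coordinate vector, no element of $\mathcal{W}_{N+1}(\word{i})$ lies in the image, and appending any one of them completes a basis. Your route avoids the case analysis entirely and is slightly more informative: it exhibits the equation of the hyperplane $\mathrm{Im}(\Psi)$, which shows for free that appending any $\ell=\sum_k c_k w_k^{(N+1)}$ with $\sum_k(-1)^k\binom{N}{k}c_k\neq 0$ would work equally well. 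The paper's computation, on the other hand, stays entirely within a single determinant evaluation and needs no auxiliary object.
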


We now provide the proof of the statement before that of the lemma.

\begin{proof}[Proof of the statement in Remark \ref{remark contre exemple}]

We prove this statement by induction on the truncation order $N \in \mathbb{N}^*$. The property is trivially satisfied for $N=1$ since the only subset of $\mathcal{W}_{\le 1}(\word{i})$ containing exactly one word with length $k=1$ is $\{\word{i}\} = \mathcal{W}_{1}(\word{i})$. Let $N \in \mathbb{N}^*$ be such that the property is satisfied, and $B_{i} \subset \mathcal{W}_{\le N+1}(\word{i})$ be a set containing exactly one word with length $k \in \{1,\cdots,N+1\}$. Thanks to the induction hypothesis, the set $B_{\word{i}} \cap \mathcal{W}_{\le N}$ is a basis of words for $\mathcal{W}_{N}(\word{i})$, and it follows that 

\begin{align}
\label{eq statement}
\text{Span}(\word{w} \shuffle \word{0}_{N - \| \word{w}\|} ~|~ \word{w} \in B_{\word{i}} \cap \mathcal{W}_{\le N}) &= \text{Span}(\mathcal{W}_{N}(\word{i})).    
\end{align}
We consider the linear map 

\[
 \mathcal{H} : \text{Span}(\mathcal{W}_{N}(\word{i})) \ni \ell \longmapsto \ell \shuffle \word{0} \in \text{Span}(\mathcal{W}_{N}(\word{i})).
\]
We have $\mathcal{H}(\text{Span}(\word{w} \shuffle \word{0}_{N - \| \word{w}\|} ~|~ \word{w} \in B_{\word{i}} \cap \mathcal{W}_{\le N})) = \text{Span}( (\word{w} \shuffle \word{0}_{N - \| \word{w} \|}) \shuffle \word{0} ~|~ \word{w} \in B_{\word{i}} \cap \mathcal{W}_{\le N}) = \text{Span}( (N+1 - \| \word{w} \|)\word{w} \shuffle \word{0}_{N +1 - \| \word{w} \|} ~|~ \word{w} \in B_{\word{i}} \cap \mathcal{W}_{\le N})$, and since $N+1 - \|\word{w}\| >0$ for each $\word{w} \in B_{\word{i}} \cap \mathcal{W}_{\le N}$, we obtain

\begin{align}
\label{eq statement 2}
\mathcal{H}(\text{Span}(\word{w} \shuffle \word{0}_{N - \| \word{w}\|} ~|~ \word{w} \in B_{\word{i}} \cap \mathcal{W}_{\le N})) &= \text{Span}(\word{w} \shuffle \word{0}_{N +1 - \| \word{w} \|} ~|~ \word{w} \in B_{\word{i}} \cap \mathcal{W}_{\le N}).
\end{align}
Moreover, we have

\begin{align}
\label{eq statement 3}
\mathcal{H}(\text{Span}(\mathcal{W}_{N}(\word{i}))) &= \text{Span}(\word{w} \shuffle \word{0} ~|~ \word{w} \in \mathcal{W}_{N}(\word{i})).
\end{align}
Using \eqref{eq statement}, \eqref{eq statement 2} and \eqref{eq statement 3}, we get $\text{Span}(\word{w} \shuffle \word{0}_{N +1 - \| \word{w} \|} ~|~ \word{w} \in B_{\word{i}} \cap \mathcal{W}_{\le N}) = \text{Span}(\word{w} \shuffle \word{0} ~|~ \word{w} \in \mathcal{W}_{N}(\word{i}))$. We denote by $\word{\tilde{w}}$ the unique element of $B_{\word{i}}$ with length $N+1$. Since $(B_{\word{i}} \cap \mathcal{W}_{\le N}) \cup \{ \word{\tilde{w}} \} = B_{\word{i}}$ and $\word{\tilde{w}} \shuffle \word{0}_{N+1- \|\word{\tilde{w}}\|} = \word{\tilde{w}}$, the previous equality yields

\begin{align}
\label{eq statement 4}
\text{Span}(\word{w} \shuffle \word{0}_{N +1 - \| \word{w} \|} ~|~ \word{w} \in B_{\word{i}}) &= \text{Span}( \{\word{w} \shuffle \word{0} ~|~ \word{w} \in \mathcal{W}_{N}(\word{i})\} \cup \{\word{\tilde{w}}\}).   
\end{align}
Lemma \ref{lemma statement} implies that $\mathcal{W}_{N}(\word{i}) \cup \{\word{\tilde{w}}\}$ is a basis of words for $\mathcal{W}_{N+1}(\word{i})$. It follows that

\[
\text{Span}( \{\word{w} \shuffle \word{0} ~|~ \word{w} \in \mathcal{W}_{N}(\word{i})\} \cup \{\word{\tilde{w}}\}) = \text{Span}(\mathcal{W}_{N+1}(\word{i})).  
\]
Using this equality and \eqref{eq statement 4} we obtain that $\text{Span}(\word{w} \shuffle \word{0}_{N +1 - \| \word{w} \|} ~|~ \word{w} \in B_{\word{i}}) = \text{Span}(\mathcal{W}_{N+1}(\word{i}))$. Since $\text{Card}(B_{\word{i}})=N+1=\text{Card}(\mathcal{W}_{N+1}(\word{i}))$, the family $(\word{w} \shuffle \word{0}_{N+1-\| \word{w} \|})_{\word{w} \in B_{\word{i}}}$ is a basis of $\text{Span}(\mathcal{W}_{N+1}(\word{i}))$. Therefore, by definition, $B_{\word{i}}$ is a basis of words for $\mathcal{W}_{N+1}(\word{i})$.

\end{proof}

We now provide the proof of Lemma \ref{lemma statement}.

\begin{proof}[Proof of Lemma \ref{lemma statement}]

For each truncation order $N \in \mathbb{N}^*$, the $N$ elements of $\mathcal{W}_{N}(\word{i})$ are ordered as follows: for $k \in \{1,\cdots,N\}$, the $k$-th element is the word $\word{w}^{(N)}_{k}=\word{0}_{k-1} \word{i} \word{0}_{N-k}$. Let $N \in \mathbb{N}^{*}$ be a truncation order and $\word{\tilde{w}} \in \mathcal{W}_{N+1}(\word{i})$. We denote by $\word{w}_{N+1}^{(N)}=\word{\tilde{w}}$. There exists an unique $k^{*} \in \{1,\cdots,N+1\}$ such that $\word{\tilde{w}} = \word{w}_{k^*}^{(N+1)}$. We denote by $P=(P_{k,k'})_{\substack{1 \le k \le N \\ 1 \le k' \le N+1}} \in \mathbb{R}^{N \times (N+1)}$ defined as follows:

\begin{align*}
P_{k,k'}=k \mathbbm{1}_{k'=k+1} + (N+1-k) \mathbbm{1}_{k'=k}. 
\end{align*}
Let $k \in \{1,\cdots,N\}$, we can easily check that $\word{w}_{k}^{(N)} \shuffle \word{0} = \sum_{k'=1}^{N+1} P_{k,k'} \word{w}_{k'}^{(N+1)}$, and $\word{w}_{N+1}^{(N)} = \sum_{k'=1}^{N+1} \mathbbm{1}_{ \{k'=k^* \}} \word{w}_{k'}^{(N+1)}$. We denote by $Q=(Q_{k,k'})_{1 \le k,k' \le N+1} \in \mathbb{R}^{(N+1) \times (N+1)}$ the matrix defined as follows

\[
    Q_{k,k'} = \begin{cases}
        P_{k,k'} ~\text{if}~ k \in \{1,\cdots, N\} \\
        \mathbbm{1}_{ \{k'=k^{*}\} } ~\text{if}~ k=N+1
    \end{cases}.
\]
Let us check that $Q$ is invertible which implies that $(\word{w} \shuffle \word{0}_{N - \| \word{w} \|})_{\word{w} \in \mathcal{W}_{N}(\word{i}) \cup \{ \word{\tilde{w}}\}}$ is a basis of $\text{Span}(\mathcal{W}_{N+1}(\word{i}))$. By expanding the determinant on the $(N+1)$-th row, we have $\det(Q)=(-1)^{N+1+k^*}\det(M)$ with $M=(M_{k,k'})_{1 \le k,k' \le N} \in \mathbb{R}^{N \times N}$ defined as follows

\[
    M_{k,k'} = \begin{cases}
        P_{k,k'} ~\text{if}~ k'<k^{*} \\
        P_{k,k'+1} ~\text{if}~ k'\ge k^{*}
    \end{cases}.
\]
It is easy to check that $M$ is block diagonal with triangular blocks. In particular, the submatrices $(M_{k,k'})_{1 \le k,k' < k^*}$ and $(M_{k,k'})_{k^* \le k,k' \le N}$ are respectively upper and lower triangular, with non-zero entries on their respective diagonals. It follows that $\det(Q)>0$. 

\end{proof}

\subsection{Proof of Corollary \ref{corrolary length}}

The proof of Corollary \ref{corrolary length} relies on the following lemma, the proof of which is postponed at the end of its proof.

\begin{lemma}
\label{lower bound length basis of words}

Let $N \in \mathbb{N}$ be a truncation order, and $B \subset \mathcal{W}_{\le N}$ be a basis of words for $\mathcal{W}_N$. Then we have the following lower bound

\begin{align*}
    \length{B} \ge \frac{N d (d+1)^N - (d+1)^{N} +1}{d}.
\end{align*}
    
\end{lemma}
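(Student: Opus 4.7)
The plan is to reduce the length to a sum over levels and then combine the partial-cardinality bound from Corollary \ref{cardinal bases} with an Abel summation trick. For a basis of words $B \subset \mathcal{W}_{\le N}$, let us set
\begin{align*}
a_k = \text{Card}(B \cap \mathcal{W}_k), \quad k = 0, \ldots, N, \qquad S_m = \sum_{k=0}^{m} a_k = \text{Card}(B \cap \mathcal{W}_{\le m}).
\end{align*}
By Corollary \ref{cardinal bases}, we have the constraints $S_N = (d+1)^N$ and $S_m \le (d+1)^m$ for every $m \in \{0, \ldots, N-1\}$.

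The first step is simply to rewrite $\length{B} = \sum_{\word{w} \in B} \|\word{w}\| = \sum_{k=0}^{N} k a_k$ in terms of the partial sums. Setting $S_{-1} = 0$, the classical Abel rearrangement gives
\begin{align*}
\length{B} = \sum_{k=0}^{N} k(S_k - S_{k-1}) = N S_N - \sum_{k=0}^{N-1} S_k = N(d+1)^N - \sum_{k=0}^{N-1} S_k.
\end{align*}

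The second step is to minimize this by maximizing $\sum_{k=0}^{N-1} S_k$ under the constraint $S_m \le (d+1)^m$. Since the constraints decouple across $m$, the maximum is attained by taking $S_m = (d+1)^m$ for each $m \in \{0, \ldots, N-1\}$, yielding
\begin{align*}
\sum_{k=0}^{N-1} S_k \le \sum_{k=0}^{N-1} (d+1)^k = \frac{(d+1)^N - 1}{d}.
\end{align*}
Plugging this back in gives exactly
\begin{align*}
\length{B} \ge N(d+1)^N - \frac{(d+1)^N - 1}{d} = \frac{N d (d+1)^N - (d+1)^N + 1}{d},
\end{align*}
which is the desired lower bound.

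There is no real obstacle: the argument is essentially a one-line Abel summation plus the elementwise bound from Corollary \ref{cardinal bases}. The only thing to keep in mind is that the constraints $S_m \le (d+1)^m$ for $m < N$ are compatible with $S_N = (d+1)^N$ when saturated (since $(d+1)^N - (d+1)^{N-1} = d(d+1)^{N-1} \ge 0$), so the minimizer is admissible. This saturation also directly explains Corollary \ref{corrolary length}: for $\mathcal{W}_{\le N}^{*}$ one checks $\text{Card}(\mathcal{W}_{\le m}^{*}) = 1 + \sum_{k=1}^{m} d(d+1)^{k-1} = (d+1)^m$ for every $m$, so the inequality above becomes an equality, and the same computation works for ${}^{*}\mathcal{W}_{\le N}$ by symmetry.
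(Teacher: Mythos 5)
Your proof is correct and follows essentially the same route as the paper: both rewrite $\length{B}$ via Abel summation as $N\,\text{Card}(B) - \sum_{k=0}^{N-1}\text{Card}(B\cap\mathcal{W}_{\le k})$ and then apply the cardinality bounds of Corollary \ref{cardinal bases}. The remark on saturation for $\mathcal{W}_{\le N}^{*}$ is a nice addition but not needed for the lemma itself.
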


\begin{proof}[Proof of Corollary \ref{corrolary length}]
Let $N \in \mathbb{N}$ be a truncation order. Theorem \ref{main result 1} ensures that $\mathcal{W}_{\le N}^{*}$ and ${}^{*}\mathcal{W}_{\le N}$ are bases of words for $\mathcal{W}_{N}$. We have $\length{\mathcal{W}_{\le N}^{*}}=\sum_{k=0}^{N} k~\text{Card}(\mathcal{W}_{\le N}^{*} \cap \mathcal{W}_{k}) =\sum_{k=1}^{N} k (d+1)^{k-1} d = \frac{N d (d+1)^N - (d+1)^N + 1}{d}$, which is the lower bound given in Lemma \ref{lower bound length basis of words}. By symmetry, $\length{{}^{*}\mathcal{W}_{\le N}}=\length{\mathcal{W}_{\le N}^{*}}$.

\end{proof}

\begin{proof}[Proof of Lemma \ref{lower bound length basis of words}]

Let $N \in \mathbb{N}$ be a truncation order, and $B \subset \mathcal{W}_{\le N}$ be a basis of words for $\mathcal{W}_N$. We have

\begin{align*}
\length{B}  &= \sum_{k=1}^{N} k~ \text{Card}(B \cap \mathcal{W}_k) = \sum_{k=1}^{N} k~(\text{Card}(B \cap \mathcal{W}_{\le k}) - \text{Card}(B \cap \mathcal{W}_{\le k-1})) \\
&= \sum_{k=1}^{N} k~\text{Card}(B \cap \mathcal{W}_{\le k}) - \sum_{k=0}^{N-1} (k+1)~\text{Card}(B \cap \mathcal{W}_{\le k}) = N~\text{Card}(B) - \sum_{k=0}^{N-1} \text{Card}(B \cap \mathcal{W}_{\le k}).
\end{align*}
Corollary \ref{cardinal bases} implies that $\text{Card}(B)=(d+1)^N$ and for all $k \in \{0,\cdots,N-1\}$ we have the inequality $\text{Card}(B \cap \mathcal{W}_{\le k}) \le (d+1)^k$. By plugging these into the previous equality, we obtain the desired result.
    
\end{proof}

\subsection{Proof of Theorem \ref{main result 2}}

Relying on Remark \ref{remark base libre and existence and uniqueness of solution}, it suffices to show that the family $(\widehat{\mathbb{X}}_{T}^{\word{w}})_{\word{w} \in \mathcal{W}_{<\infty}^{*}}$ is linearly independent for the almost-sure equality. The proof of this statement relies on the following proposition, the proof of which is given in Section \ref{section prop brownian}.

\begin{proposition}
\label{prop brownian}
Let $\left(\Omega,\mathcal{F},\mathbb{P}\right)$ be a probability space, $(W_t)_{t \ge 0}$ be a $d$-dimensional Brownian motion under $\mathbb{P}$, and $N \in \mathbb{N}$. Then, for all $T > 0$, the family of random variables $(\widehat{\mathbb{W}}_T^{\word{w}})_{\word{w} \in \mathcal{W}_{\le N}^*}$ is linearly independent for the almost-sure equality. 
\end{proposition}

\begin{proof}[Proof of Theorem \ref{main result 2}]
\label{proof main result 2}
Let $(X_t)_{t \in [0,T]}$ be the solution to \eqref{Ito form}. We denote by $\mu$ the law of $Y$. Since the signature is invariant under translations, it is enough to prove the result conditionally on $Y=y$, for $\mu$-a.e. $y\in\mathbb R^d$. Moreover, since $Y$ is independent of the driving Brownian motion $(\beta_t)_{t \in [0,T]}$, then conditionally on $Y=y$  for $\mu$-a.e. $y\in\mathbb R^d$, the process $(X_t)_{t \in [0,T]}$ is solution to the SDE $dX_t = b(t,X_t)dt + \sigma(t,X_t) d\beta_t,  X_0=y$. For such a fixed $y$, setting $\widetilde X_t=X_t-y$, the process $\widetilde X$ solves an SDE starting from $0$ with coefficients $\widetilde b_y(t,x)=b(t,y+x), \widetilde\sigma_y(t,x)=\sigma(t,y+x)$. These coefficients satisfy the same regularity assumptions and the uniform ellipticity condition is preserved. Hence, without loss of generality, we assume from now on that $X_0=0$. Let $(c_{\word{w}})_{\word{w} \in \mathcal{W}_{<\infty}^*}$ be a family with finite number of non-zero elements such that
\begin{align}
\label{eq:forme_lineaire_nulle}
\sum_{\word{w} \in \mathcal{W}_{<\infty}^*} c_{\word{w}} \widehat{\mathbb{X}}_{T}^{\word{w}} = 0,
\quad \text{a.s.}.
\end{align}
There exists $N \in \mathbb{N}$ such that $c_{\word{w}}=0$ for every $\word{w} \in \mathcal{W}_{<\infty}^*$ satisfying $\|\word{w}\|>N$.

For any finite-dimensional Euclidean space $E$, any $q\ge 1$ and for every $M\ge \lfloor q\rfloor$, we denote by $S_M^q:
\mathcal C^{q\text{-var}}_o([0,T],G^{\lfloor q\rfloor}(E))
\longrightarrow
\mathcal C^{q\text{-var}}_o([0,T],G^M(E))$ the $M$-step truncated signature map. Here $\mathcal C^{q\text{-var}}_o([0,T],G^k(E))$ denotes the space of continuous paths with finite $q$-variation, taking values in the free nilpotent group $G^k(E)$, and starting from the unit element at time $0$. When the ambient space $E$ is clear from the argument of $S_M^q$, we omit it from the notation. In particular, $S_2^1$ denotes the canonical $2$-step lift of a finite-variation path, in whatever Euclidean space the path takes its values. Fix $p\in(2,3)$. We denote by $d_{p\text{-var}}$ the $p$-variation rough path metric defined from the Carnot--Carathéodory metric $d_{\rm CC}$; see \citep[Definitions 8.1 and 7.41]{friz2010multidimensional}. We denote by $\mathcal C^{0,p\text{-var}}_o([0,T],G^k(E))
:=
\overline{S_k^{1}\left(\mathcal C^{1\text{-var}}_o([0,T],E)\right)}^{\,d_{p\text{-var}}} \subset \mathcal C^{p\text{-var}}_o([0,T],G^k(E))$ the space of geometric $p$-rough paths, that is, the closure of the canonical lifts of finite-variation paths for the $p$-variation rough path metric. It is a proper closed subset of $\mathcal C^{p\text{-var}}_o([0,T],G^k(E))$; see \citep[Corollary 8.26]{friz2010multidimensional}. We denote by $\mathbf{X}\in \mathcal C^{0,p\text{-var}}_o([0,T],G^2(\mathbb R^d))$ the Stratonovich geometric rough path lift of $(X_t)_{t\in[0,T]}$, and by $\operatorname{Supp}(\mathbf X)$ the support of its law with respect to the $p$-variation rough path topology, i.e. $\operatorname{Supp}(\mathbf X) :=\Bigl\{\mathbf x\in \mathcal C^{0,p\text{-var}}_o([0,T],G^2(\mathbb R^d)) ~|~  \forall \varepsilon>0, \mathbb{P}(d_{p\text{-var}}(\mathbf X,\mathbf x)<\varepsilon\bigr)>0\Bigr\}$. Since $\mathcal C^{0,p\text{-var}}_o([0,T],G^2(\mathbb R^d))$ is closed in $\mathcal C^{p\text{-var}}_o([0,T],G^2(\mathbb R^d))$ and $\mathbf X$ takes almost surely its values in the former, $\operatorname{Supp}(\mathbf X)$ also coincides with the support of the law of $\mathbf X$ computed in the larger space $\mathcal C^{p\text{-var}}_o([0,T],G^2(\mathbb R^d))$. For any finite-dimensional Euclidean space $E$ and any $k\ge 1$, we denote by $\pi_1:G^k(E)\longrightarrow E$ the first-level projection. Thus, if $\mathbf{x}\in\mathcal C^{p\text{-var}}_o([0,T],G^k(E))$, then $\pi_1(\mathbf{x})_t$ denotes the first-level component of $\mathbf{x}_t$. Since $t\mapsto t$ has finite variation and $1/p+1>1$, the Young pairing theorem
\citep[Section 9.4, Theorem 9.32]{friz2010multidimensional} provides a map $\mathcal C^{0,p\text{-var}}_o([0,T],G^2(\mathbb R^d)) \ni \mathbf{x} \mapsto
 \widehat{\mathbf{x}} \in \mathcal C^{0,p\text{-var}}_o([0,T],G^2(\mathbb R^{d+1}))$ which is continuous for the $p$-variation rough path metric and which satisfies $\widehat{S_2^1(x)}=S_2^1(\widehat x)$ for every $x\in\mathcal C^{1\text{-var}}_o([0,T],\mathbb R^d)$,
where $\widehat x_t:=(t,x_t)$. Since $S_2^1\left(\mathcal C^{1\text{-var}}_o([0,T],\mathbb R^d)\right)$ is
dense in $\mathcal C^{0,p\text{-var}}_o([0,T],G^2(\mathbb R^d))$, these two properties characterise $\mathbf{x}\mapsto\widehat{\mathbf{x}}$ uniquely; we call $\widehat{\mathbf{x}}$ the time-augmentation of $\mathbf{x}$, and we note that $\pi_1(\widehat{\mathbf{x}})_t=(t,\pi_1(\mathbf{x})_t)$ for all $t\in[0,T]$. Let $\Phi:
\mathcal{C}^{0,p\text{-var}}_{o}([0,T],G^{2}(\mathbb{R}^d))
\longrightarrow \mathbb R$ be defined by
\[
\Phi(\mathbf{x})
=
\sum_{\word{w} \in \mathcal{W}_{\le N}^*}
c_{\word{w}}
\left\langle
\word{w}, S_N^2(\widehat{\mathbf{x}})_T
\right\rangle.
\]
For $r\in\mathbb{N}^*$ and $n\in\mathbb N$, we denote by $\mathcal{D}^{r,n}\subset\mathcal C^{1\text{-var}}_o([0,T],\mathbb R^r)$ the set of paths starting from $0$ that are affine on each interval $[kT2^{-n},(k+1)T2^{-n}]$, $k=0,\ldots,2^n-1$, of the $n$-th dyadic partition of $[0,T]$, and we set $\mathcal{D}^r:=\bigcup_{n\in\mathbb N}\mathcal D^{r,n}$; that is, $\mathcal D^r$ is the set of paths starting from $0$ that are affine on each interval of some dyadic partition of $[0,T]$.

\medskip
\noindent
\textbf{First step: $\forall \mathbf{x} \in \operatorname{Supp}(\mathbf{X}), \Phi(\mathbf{x})=0$.} 
\newline
The map $\left(\mathcal{C}^{0,p\text{-var}}_{o}([0,T],G^2(\mathbb{R}^d)),d_{p\text{-var}}\right)  \to \left(\mathcal{C}^{0,p\text{-var}}_{o}([0,T],G^2(\mathbb{R}^{d+1})),d_{p\text{-var}}\right), \mathbf{x} \mapsto \widehat{\mathbf{x}}$ is continuous, as recalled above. The map $S_N^2$ is continuous from $\left(\mathcal{C}^{0,p\text{-var}}_{o}([0,T],G^2(\mathbb{R}^{d+1})),d_{p\text{-var}}\right)$ to $\left(\mathcal{C}^{0,p\text{-var}}_{o}([0,T],G^N(\mathbb{R}^{d+1})),d_{p\text{-var}}\right)$ (see \citep{friz2010multidimensional}[Corollary 9.11]). The terminal evaluation map $\mathbf{x}\mapsto \mathbf{x}_T$ is continuous from $\left(\mathcal{C}^{0,p\text{-var}}_{o}([0,T],G^N(\mathbb{R}^{d+1})),d_{p\text{-var}}\right)$ to $\left(G^N(\mathbb{R}^{d+1}),d_{\rm CC}\right)$, and the coordinate embedding of $G^N(\mathbb{R}^{d+1})$ into $\mathbb{R}^{\frac{(d+1)^{N+1}-1}{d}}$is continuous. Hence $\Phi$ is continuous with respect to the $p$-variation rough path topology. By \eqref{eq:forme_lineaire_nulle}, we have $\Phi(\mathbf{X})=0$ almost surely. Since $\Phi$ is continuous, it follows that $\Phi(\mathbf{x})=0$, for all $\mathbf{x}\in\operatorname{Supp}(\mathbf{X})$.

\medskip
\noindent
\textbf{Second step: $S_2^{1}(\mathcal{D}^d) \subset \operatorname{Supp}(\mathbf{X})$.} 
\newline
For $r\in\mathbb N$, we denote by $\mathcal D^r_p
:=
\overline{S_2^{1}(\mathcal D^r)}^{\,d_{p\text{-var}}}
\subset
\mathcal{C}^{p\text{-var}}_{o}([0,T],G^{2}(\mathbb{R}^r))$. For $k=1,\ldots,m$, we denote by $\sigma_k(t,x)\in\mathbb R^d$ the $k$-th column of $\sigma(t,x)$, and define the Stratonovich drift $\bar b(t,x)
:=
b(t,x)
-
\frac12\sum_{k=1}^m
\nabla_x\sigma_k(t,x)\sigma_k(t,x)$. The Itô equation \eqref{Ito form} is equivalently written in Stratonovich form as
\[
dX_t
=
\sigma(t,X_t)\circ d\beta_t
+
\bar b(t,X_t)\,dt.
\]
Since $b$ is of class $C^3$ and $\sigma$ is of class $C^4$, both with linear growth, the coefficients $\sigma$ and $\bar b$ of this Stratonovich equation are of class $C^3$ with linear growth. Therefore, by Lyons' continuity theorem \citep{lyons2007differential}, the Itô map associated with this equation, which is defined on lifted finite-variation controls by $S_2^{1}(h)\mapsto S_2^{1}(x^h)$ for $h\in \mathcal C^{1\text{-var}}_o([0,T],\mathbb R^m)$, where $x^h$ solves
\begin{align}
\label{controlled ODE}
\dot x^h_t
=
\sigma(t,x^h_t)\dot h_t+\bar b(t,x^h_t),
\qquad
x^h_0=0,
\end{align}
extends uniquely into a continuous map $\varphi:
\mathcal{C}^{0,p\text{-var}}_{o}([0,T],G^{2}(\mathbb{R}^m))
\longrightarrow
\mathcal{C}^{0,p\text{-var}}_{o}([0,T],G^{2}(\mathbb{R}^d))$, and $\mathbf{X}=\varphi(\mathbf B)$ almost surely, where $\mathbf B$ denotes the canonical Stratonovich rough path lift of the driving Brownian motion $\beta$; see \citep[Corollary 6]{ledoux2002large}. Let us stress that the extension by density underlying the definition of $\varphi$ takes place in $\mathcal{C}^{0,p\text{-var}}_{o}([0,T],G^{2}(\mathbb{R}^m))$, which is by definition the closure of $S_2^{1}\left(\mathcal C^{1\text{-var}}_o([0,T],\mathbb R^m)\right)$, and which is a proper closed subset of $\mathcal{C}^{p\text{-var}}_{o}([0,T],G^{2}(\mathbb{R}^m))$; this is the reason why all the rough path spaces above are taken to be the geometric ones. In particular, we deduce from the continuity of $\varphi$ that
\begin{align}
\label{eq:inclusion_1}
\varphi(\mathcal D^m_p)
\subset
\overline{\varphi(S_2^{1}(\mathcal D^m))}^{\,d_{p\text{-var}}}.
\end{align}
By \citep{ledoux2002large}[Corollary 6] applied to the above Stratonovich equation,
\begin{align}
\label{eq:inclusion_2}
\operatorname{Supp}(\mathbf{X})
=
\overline{\varphi(S_2^{1}(\mathcal D^m))}^{\,d_{p\text{-var}}}.
\end{align}
Moreover $S_2^{1}(\mathcal H^m)\subset \mathcal D^m_p$ where $\mathcal H^m
=
\left\{
h=\int_0^{\cdot} \dot h_s\,ds ~|~
\dot h\in L^2([0,T],\mathbb R^m)
\right\}$ denotes the Cameron--Martin space. Indeed, the dyadic piecewise affine approximations of any $h\in\mathcal H^m$ converge, after canonical lifting, to $S_2^{1}(h)$ in the $p$-variation rough path topology; see \citep{ledoux2002large} for more details. Therefore, \eqref{eq:inclusion_1} and \eqref{eq:inclusion_2} yield
\begin{align}
\label{eq:inclusions}
\varphi(S_2^{1}(\mathcal H^m))
\subset
\varphi(\mathcal D^m_p)
\subset
\overline{\varphi(S_2^{1}(\mathcal D^m))}^{\,d_{p\text{-var}}}
=
\operatorname{Supp}(\mathbf{X}).
\end{align}
Let us now show that $S_2^{1}(\mathcal{D}^d) \subset \varphi(S_{2}^{1}(\mathcal{H}^m))$. Let $\gamma \in \mathcal{D}^d$. We construct a control $h\in\mathcal H^m$ such that $x^h=\gamma$. Since $\gamma$ is dyadic piecewise affine, its derivative $\dot\gamma_t$ exists $dt$-a.e.. Define, for $dt$-a.e. $t \in [0,T]$,
\[
\dot h_t
=
\sigma(t,\gamma_t)^\top
\left(
\sigma(t,\gamma_t)\sigma(t,\gamma_t)^\top
\right)^{-1}
\left(
\dot\gamma_t
-
\bar{b}(t,\gamma_t)
\right).
\]
The path $\gamma$ has compact range, $\dot\gamma$ is piecewise constant and bounded, and the coefficients $b,\sigma_k,\nabla_x\sigma_k$ are continuous. By uniform ellipticity, the inverse of $\sigma(t,\gamma_t)\sigma(t,\gamma_t)^\top$ is uniformly bounded along the path $\gamma$. Hence $\dot h\in L^2([0,T],\mathbb R^m)$, so $h\in\mathcal H^m$. Furthermore, by construction, $\dot\gamma_t
=
\sigma(t,\gamma_t)\dot h_t+\bar b(t,\gamma_t)$ $dt$-a.e.. Therefore $\gamma$ solves the controlled ODE associated with $h$. Uniqueness of the solution to the controlled ODE \eqref{controlled ODE} yields $x^h=\gamma$. Consequently, $S_2^{1}(\gamma)
=
S_2^{1}(x^h)
=
\varphi(S_2^{1}(h))$. It comes that $S_2^{1}(\mathcal{D}^d) \subset \varphi(S_{2}^{1}(\mathcal{H}^m))$, and \eqref{eq:inclusions} yields $S_2^{1}(\mathcal{D}^d) \subset \operatorname{Supp}(\mathbf{X})$.

\medskip
\noindent
\textbf{Third step.}
\newline
Let $(W_t)_{t\in[0,T]}$ be a $d$-dimensional Brownian motion and let $(W^{(n)})_{n\ge1}$ be its dyadic piecewise affine interpolation. For every $n \in \mathbb{N}^*$ and every $\omega \in \Omega$, the path $W^{(n)}(\omega)$ belongs to $\mathcal D^d$. Hence, by the first two steps, $\Phi\left(S_2^{1}(W^{(n)}(\omega))\right)=0$ for all $n\in\mathbb{N}^*$. By the almost-sure convergence of dyadic piecewise affine approximations to the canonical Brownian rough path in the $p$-variation rough path topology \citep{lyons2007differential, ledoux2002large,friz2014course}, $S_2^{1}(W^{(n)})
\longrightarrow
\mathbf W$ a.s., for the $p$-variation rough path topology, where $\mathbf W$ denotes the canonical Stratonovich Brownian rough path. Since $\Phi$ is continuous, we obtain $\Phi(\mathbf W)=0$ almost-surely. Equivalently, $\sum_{\word{w}\in\mathcal W^*_{\le N}}
c_{\word{w}}
\widehat{\mathbb W}^{\word{w}}_T
=
0$, almost-surely. By Proposition \ref{prop brownian}, it follows that $c_{\word{w}}=0$, for all $\word{w}\in\mathcal W^{*}_{\le N}$. This proves the result.
\end{proof}

\subsubsection{Proof of Proposition \ref{prop brownian}}
\label{section prop brownian}

The proof of Proposition \ref{prop brownian} relies on the following lemmas. Their proofs are postponed after the proof of Proposition \ref{prop brownian}. In the remainder of this section, $(\Omega,\mathcal{F},\mathbb{P})$ and $(W_t)_{t \ge 0}$ denote respectively the probability space and the $d$-dimensional Brownian motion introduced in Proposition \ref{prop brownian}. 

\begin{lemma}
\label{liberté trajectorielle}

Let $T > 0$, $N \in \mathbb{N}$ and $(c_{\word{w}})_{\word{w} \in \mathcal{W}_{\le N}}$ be such that:

\begin{align*}
\forall t \le T, \sum_{\word{w} \in \mathcal{W}_{\le N}} c_{\word{w}} \widehat{\mathbb{W}}^{\word{w}}_t = 0 ~~\text{a.s.}. 
\end{align*}
Then $c_{\word{w}}=0$ for each $\word{w} \in \mathcal{W}_{\le N}$.

\end{lemma}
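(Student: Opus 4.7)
I would prove this lemma by induction on the truncation order $N \in \mathbb{N}$. The base case $N=0$ is immediate: the only word is $\emptyword$ with $\widehat{\mathbb{W}}_t^{\emptyword} = 1$, so the given relation forces $c_{\emptyword} = 0$. For the inductive step, assume the statement is true at truncation order $N-1$, and consider a collection $(c_{\word{w}})_{\word{w} \in \mathcal{W}_{\le N}}$ such that $f(t) := \sum_{\word{w} \in \mathcal{W}_{\le N}} c_{\word{w}} \widehat{\mathbb{W}}_t^{\word{w}} = 0$ for all $t \le T$ $\mathbb{P}$-a.s. Since $\widehat{\mathbb{W}}_0^{\word{w}} = 0$ for every $\word{w} \ne \emptyword$, evaluation at $t=0$ immediately gives $c_{\emptyword} = 0$. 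The remaining task is to peel off the last letter of each word so as to reduce the equation to several relations at truncation order $N-1$.

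The key step is to rewrite $f$ in Itô semimartingale form. Applying the Stratonovich-to-Itô conversion to the recursive definition $\widehat{\mathbb{W}}_t^{\word{v}\word{i}} = \int_0^t \widehat{\mathbb{W}}_s^{\word{v}} \circ d\widehat{W}_s^{\word{i}}$, and computing the covariation $[\widehat{\mathbb{W}}^{\word{v}}, W^{\word{i}}]$, one sees that for $\word{i} \in \{\word{1},\dots,\word{d}\}$ the correction is nontrivial only when $\word{v}$ itself ends in the letter $\word{i}$, in which case it equals $\frac{1}{2}\int_0^t \widehat{\mathbb{W}}_s^{\word{u}}\, ds$, where $\word{u}$ is $\word{v}$ with its last letter removed. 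Summing over all $\word{w}$, one obtains
\begin{align*}
f(t) = \int_0^t A_s\, ds + \sum_{\word{i}=\word{1}}^{\word{d}} \int_0^t B_s^{\word{i}}\, dW_s^{\word{i}},
\end{align*}
with $B_s^{\word{i}} = \sum_{\word{v} \in \mathcal{W}_{\le N-1}} c_{\word{v}\word{i}}\, \widehat{\mathbb{W}}_s^{\word{v}}$ and $A_s = \sum_{\word{v} \in \mathcal{W}_{\le N-1}} c_{\word{v}\word{0}}\, \widehat{\mathbb{W}}_s^{\word{v}} + \frac{1}{2} \sum_{\word{u} \in \mathcal{W}_{\le N-2},\, \word{i}\neq \word{0}} c_{\word{u}\word{i}\word{i}}\, \widehat{\mathbb{W}}_s^{\word{u}}$. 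Since $f \equiv 0$, uniqueness of the semimartingale decomposition together with the orthogonality of the $d$ independent Brownian drivers forces $A_s \equiv 0$ and $B_s^{\word{i}} \equiv 0$ for Lebesgue-a.e.\ $s$, $\mathbb{P}$-a.s. By continuity of all signature components, these identities actually hold for every $s \in [0,T]$.

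Each relation $B^{\word{i}} \equiv 0$ is an identity of the exact form assumed in the lemma, but at truncation $N-1$: the induction hypothesis gives $c_{\word{v}\word{i}} = 0$ for every $\word{v} \in \mathcal{W}_{\le N-1}$ and every $\word{i} \in \{\word{1},\dots,\word{d}\}$. In particular, all coefficients $c_{\word{u}\word{i}\word{i}}$ appearing in the Itô correction inside $A$ vanish, so $A \equiv 0$ simplifies to $\sum_{\word{v} \in \mathcal{W}_{\le N-1}} c_{\word{v}\word{0}}\, \widehat{\mathbb{W}}_s^{\word{v}} = 0$, and the induction hypothesis applied once more yields $c_{\word{v}\word{0}} = 0$ for all $\word{v} \in \mathcal{W}_{\le N-1}$. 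Combined with $c_{\emptyword} = 0$, this exhausts $\mathcal{W}_{\le N}$ and closes the induction.

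The main difficulty is not conceptual but rather bookkeeping: one must carefully identify the Itô semimartingale decomposition of each $\widehat{\mathbb{W}}^{\word{w}}$, verify that the Stratonovich correction only involves words in $\mathcal{W}_{\le N-2}$ (so that these terms are absorbed into the drift without spoiling the reduction to truncation $N-1$), and check that the martingale equations against the $d$ independent Brownian noises genuinely decouple. Once this is set up, the induction runs cleanly.
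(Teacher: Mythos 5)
Your proposal is correct and follows essentially the same route as the paper: induction on the truncation order, peeling off the last letter of each word by identifying the integrands against each Brownian driver (the paper does this by computing the quadratic covariation $\left<\cdot\,,W^{\word{j}}\right>$ directly, which makes the Stratonovich corrections drop out automatically, whereas you extract them from the full It\^o decomposition and observe afterwards that they vanish), then handling the words ending in $\word{0}$ through the drift and invoking the induction hypothesis twice. The two presentations are interchangeable and both rest on the same covariation identity, which is the paper's Lemma \ref{lemme utile strato}.
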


\begin{lemma}
\label{lemme utile strato}

Let $\word{i} \in \{\word{0},\cdots,\word{d} \}$, $\word{j} \in \{\word{1},\cdots,\word{d} \}$ and $\word{w} \in \mathcal{W}_{< \infty}$. Then 

\begin{align*}
    \widehat{\mathbb{W}}_{t}^{\word{wij}} &= \int_{0}^{t} \widehat{\mathbb{W}}_{s}^{\word{wi}} dW_{s}^{\word{j}} + \frac{1}{2} \mathbbm{1}_{\word{i}=\word{j}} \int_{0}^{t} \widehat{\mathbb{W}}_{s}^{\word{w}} ds,
\end{align*}
where the first integral is understood as an Itô integral.   
\end{lemma}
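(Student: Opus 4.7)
The plan is to rely on the Stratonovich–Itô conversion formula $\int_0^t X_s \circ dY_s = \int_0^t X_s \, dY_s + \tfrac{1}{2}[X,Y]_t$, valid for any continuous semimartingales $X,Y$, applied to the very definition of the signature of a continuous semimartingale. By definition, and since $\word{j}\in\{\word{1},\dots,\word{d}\}$ implies $\widehat{W}^{\word{j}}=W^{\word{j}}$, we have
\begin{align*}
\widehat{\mathbb{W}}_t^{\word{w}\word{i}\word{j}} \;=\; \int_0^t \widehat{\mathbb{W}}_s^{\word{w}\word{i}} \circ d\widehat{W}_s^{\word{j}} \;=\; \int_0^t \widehat{\mathbb{W}}_s^{\word{w}\word{i}} \, dW_s^{\word{j}} + \tfrac{1}{2}\bigl[\widehat{\mathbb{W}}^{\word{w}\word{i}}, W^{\word{j}}\bigr]_t,
\end{align*}
so the whole problem reduces to identifying the bracket $[\widehat{\mathbb{W}}^{\word{w}\word{i}},W^{\word{j}}]_t$. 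An easy induction on $\|\word{w}\|$ first ensures that every component $\widehat{\mathbb{W}}^{\word{u}}$ is a continuous semimartingale, so that these manipulations are legitimate.

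\textbf{Case $\word{i}=\word{0}$.} Here $\widehat{W}^{\word{0}}_s=s$, so by definition $\widehat{\mathbb{W}}_s^{\word{w}\word{0}}=\int_0^s \widehat{\mathbb{W}}_r^{\word{w}}\, dr$, which is a finite variation process. Its bracket with the continuous semimartingale $W^{\word{j}}$ is therefore identically zero. On the other hand $\mathbbm{1}_{\word{i}=\word{j}}=0$ because $\word{j}\in\{\word{1},\dots,\word{d}\}$, and the formula holds trivially.

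\textbf{Case $\word{i}\in\{\word{1},\dots,\word{d}\}$.} Unfolding once more the Stratonovich definition yields
\begin{align*}
\widehat{\mathbb{W}}_s^{\word{w}\word{i}} \;=\; \int_0^s \widehat{\mathbb{W}}_r^{\word{w}} \circ dW_r^{\word{i}} \;=\; \int_0^s \widehat{\mathbb{W}}_r^{\word{w}} \, dW_r^{\word{i}} + \tfrac{1}{2}\bigl[\widehat{\mathbb{W}}^{\word{w}}, W^{\word{i}}\bigr]_s.
\end{align*}
The correction term is of finite variation, so it contributes nothing to the bracket with $W^{\word{j}}$. Using the standard rule for brackets of Itô integrals together with $d[W^{\word{i}},W^{\word{j}}]_s=\mathbbm{1}_{\word{i}=\word{j}}\,ds$, we obtain
\begin{align*}
\bigl[\widehat{\mathbb{W}}^{\word{w}\word{i}}, W^{\word{j}}\bigr]_t \;=\; \int_0^t \widehat{\mathbb{W}}_r^{\word{w}} \, d[W^{\word{i}},W^{\word{j}}]_r \;=\; \mathbbm{1}_{\word{i}=\word{j}} \int_0^t \widehat{\mathbb{W}}_r^{\word{w}} \, dr.
\end{align*}
Plugging this into the first display gives the announced identity.

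\textbf{Main difficulty.} There is no serious obstacle: all manipulations are standard stochastic calculus once we have identified that the right conceptual move is to convert the outermost Stratonovich integral into an Itô integral and to recognize that the inner Stratonovich/Itô correction is of finite variation and hence invisible to the bracket with $W^{\word{j}}$. The only mild subtlety is to treat the case $\word{i}=\word{0}$ separately, since then the integrator $\widehat{W}^{\word{0}}_s=s$ is not a Brownian coordinate and the inner integral is a Lebesgue, not a Stratonovich, integral; this case is however immediate because both sides of the desired identity reduce to the Itô integral alone.
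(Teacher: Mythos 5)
Your proof is correct and follows essentially the same route as the paper: convert the outer Stratonovich integral to an It\^o integral plus half a quadratic covariation, then identify that bracket by unfolding $\widehat{\mathbb{W}}^{\word{wi}}$ and discarding the finite-variation correction. The only cosmetic difference is that you split off the case $\word{i}=\word{0}$ explicitly, whereas the paper treats it uniformly via $d\langle W^{\word{i}},W^{\word{j}}\rangle_s=\mathbbm{1}_{\word{i}=\word{j}}\,ds$.
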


\begin{lemma}
\label{esp cond}
Let $\word{w} \in \mathcal{W}_{< \infty} \setminus \{ \emptyword \}$ and $\word{i} \in \{\word{0},\cdots,\word{d}\}$. If $\word{w}=\word{\gamma i}$ with $\word{i} \neq \word{0}$, then there exist a finite family of polynomials $(P_{k,\word{w}})_{1 \le k \le M^{\word{w}}}$ and a finite family of words $(\word{v}_{k,\word{w}})_{1 \le k \le M^{\word{w}}}$ of length smaller than the length of $\word{w}$ such that:

\begin{enumerate}
\item[(i)] $\forall T > 0, \forall t \le T, \mathbb{E}\left[\widehat{\mathbb{W}}^{\word{w}}_T|\mathcal{F}_t\right]=\widehat{\mathbb{W}}^{\word{w}}_t + \sum_{k=1}^{M^{\word{w}}}P_{k,\word{w}}(T)\widehat{\mathbb{W}}^{\word{v}_{k,\word{w}}}_t$, 
\item[(ii)] $\forall k \le M^{\word{w}}, \text{deg}(P_{k,\word{w}}) + \|\word{v}_{k,\word{w}}\| \le \|\word{w}\|-1$.
\end{enumerate}
If $\word{w}=\word{\gamma 0}$, then there exist a finite family of polynomials $(P_{k,\word{w}})_{1 \le k \le M^{\word{w}}}$ and a finite family of words $(\word{v}_{k,\word{w}})_{1 \le k \le M^{\word{w}}}$ of length smaller than the length of $\word{w}$ such that:

\begin{enumerate}
\item[(i)] $\forall T > 0, \forall t \le T, \mathbb{E}\left[\widehat{\mathbb{W}}^{\word{w}}_T|\mathcal{F}_t\right]= \sum_{k=1}^{M^{\word{w}}}P_{k,\word{w}}(T)\widehat{\mathbb{W}}^{\word{v}_{k,\word{w}}}_t$,
\item[(ii)] $\forall k \le M^{\word{w}}, \text{deg}(P_{k,\word{w}}) + \|\word{v}_{k,\word{w}}\| \le \|\word{w}\|$.
\end{enumerate}

\end{lemma}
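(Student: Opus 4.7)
The strategy is to proceed by induction on $n = \|\word{w}\|$. The base case $n=1$ is immediate: for $\word{w} = \word{0}$ one obtains $\mathbb{E}\left[\widehat{\mathbb{W}}^{\word{0}}_T \mid \mathcal{F}_t\right] = T = T\,\widehat{\mathbb{W}}^{\emptyword}_t$, which is the required Case 2 form; and for $\word{w} = \word{j}$ with $\word{j} \ne \word{0}$ the process $\widehat{\mathbb{W}}^{\word{j}} = W^{\word{j}}$ is a martingale, so $\mathbb{E}\left[\widehat{\mathbb{W}}^{\word{j}}_T \mid \mathcal{F}_t\right] = \widehat{\mathbb{W}}^{\word{j}}_t$ with an empty sum, which is Case 1.

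For the inductive step, assume the statement holds for all words of length at most $n \ge 1$ and take $\word{w}$ of length $n+1$. In both cases the starting point is to split $\widehat{\mathbb{W}}^{\word{w}}_T = \widehat{\mathbb{W}}^{\word{w}}_t + (\widehat{\mathbb{W}}^{\word{w}}_T - \widehat{\mathbb{W}}^{\word{w}}_t)$, the increment being an integral on $[t,T]$. In Case 1, $\word{w} = \word{w''i'i}$ with $\word{i}\neq\word{0}$ (allowing $\word{w''}=\emptyword$), Lemma \ref{lemme utile strato} applied from $0$ to $T$ and from $0$ to $t$ and subtracted gives
\begin{align*}
\widehat{\mathbb{W}}^{\word{w}}_T - \widehat{\mathbb{W}}^{\word{w}}_t = \int_t^T \widehat{\mathbb{W}}^{\word{w''i'}}_s \, dW_s^{\word{i}} + \tfrac{1}{2}\mathbbm{1}_{\word{i'}=\word{i}} \int_t^T \widehat{\mathbb{W}}^{\word{w''}}_s \, ds,
\end{align*}
whose conditional expectation reduces to $\tfrac{1}{2}\mathbbm{1}_{\word{i'}=\word{i}} \int_t^T \mathbb{E}\left[\widehat{\mathbb{W}}^{\word{w''}}_s \mid \mathcal{F}_t\right] ds$ since the Itô integral is a martingale. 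In Case 2, $\word{w} = \word{\gamma 0}$ and the increment is simply $\int_t^T \widehat{\mathbb{W}}^{\word{\gamma}}_s\, ds$, so after conditioning we must integrate $\mathbb{E}\left[\widehat{\mathbb{W}}^{\word{\gamma}}_s\mid\mathcal{F}_t\right]$. Either way, the inductive hypothesis applied to $\word{w''}$ (length $n-1$) or $\word{\gamma}$ (length $n$) expresses the integrand as a polynomial in $s$ times signature components at time $t$.

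The decisive algebraic step is to integrate in $s$ and convert any surviving power of $t$ back into a signature component via $t^m = m!\, \widehat{\mathbb{W}}^{\word{0}_m}_t$ combined with the shuffle relation
\begin{align*}
t^m\, \widehat{\mathbb{W}}^{\word{v}}_t = m!\, \left\langle \word{0}_m \shuffle \word{v},\, \widehat{\mathbb{W}}_t \right\rangle,
\end{align*}
which is a linear combination of $\widehat{\mathbb{W}}^{\word{u}}_t$ with $\|\word{u}\| = m + \|\word{v}\|$. Because each integration from $t$ to $T$ raises the polynomial degree by one, the inductive degree-plus-length bounds transfer cleanly to $\le \|\word{w}\|-1$ in Case 1 and $\le \|\word{w}\|$ in Case 2. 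The one point that requires care, and is the main subtlety in Case 2, is the fate of the leading $\widehat{\mathbb{W}}^{\word{w}}_t$ summand which is not allowed to appear explicitly: when $\word{\gamma}$ ends in a non-zero letter the hypothesis contributes an explicit $\widehat{\mathbb{W}}^{\word{\gamma}}_t$ term whose integration produces $(T-t)\widehat{\mathbb{W}}^{\word{\gamma}}_t = T\widehat{\mathbb{W}}^{\word{\gamma}}_t - \widehat{\mathbb{W}}^{\word{0}\shuffle\word{\gamma}}_t$, and the shuffle $\word{0}\shuffle\word{\gamma}$ contains $\word{\gamma 0}=\word{w}$ with coefficient one, exactly cancelling the leading $\widehat{\mathbb{W}}^{\word{w}}_t$; the other summands of the shuffle contribute $\widehat{\mathbb{W}}^{\word{u}}_t$'s of length $n+1$ paired with constant polynomials, which is admissible under the Case 2 bound $\deg+\|\word{v}\|\le\|\word{w}\|$. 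When $\word{\gamma}$ already ends in $\word{0}$ the hypothesis does not generate any $\widehat{\mathbb{W}}^{\word{\gamma}}_t$ term and the leading $\widehat{\mathbb{W}}^{\word{w}}_t$ is itself absorbed into the Case 2 sum as a $\deg=0$, $\|\word{v}\|=\|\word{w}\|$ contribution. Once this cancellation/absorption is tracked, the remainder reduces to bookkeeping on polynomial degrees and word lengths.
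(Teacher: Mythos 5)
Your proof is correct and follows essentially the same route as the paper's: induction on the word length, Lemma \ref{lemme utile strato} to reduce the Stratonovich increment to a martingale plus a $\frac{1}{2}\mathbbm{1}_{\word{i'}=\word{i}}$ time-integral correction, integration of the polynomials from the induction hypothesis, and the shuffle identity $t^m\,\widehat{\mathbb{W}}^{\word{v}}_t = m!\left\langle \word{0}_m \shuffle \word{v},\widehat{\mathbb{W}}_t\right\rangle$ to absorb the resulting powers of $t$. The only differences are organizational: you merge the paper's four last-two-letter cases into two (so a single base case $n=1$ suffices, where the paper initializes at $n\in\{1,2\}$), and you track explicitly the cancellation of the leading $\widehat{\mathbb{W}}^{\word{w}}_t$ term in the $\word{\gamma 0}$ case, which the paper's computation contains but does not point out.
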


\begin{remark}
\label{esp cond emptyword}
Since $\widehat{\mathbb{W}}_{T}^{\emptyword}=1$, we trivially have $\mathbb{E}\left[\widehat{\mathbb{W}}_{T}^{\emptyword} ~|~ \mathcal{F}_t\right] = \widehat{\mathbb{W}}_{t}^{\emptyword}$.
\end{remark}

We are ready to provide the proof of Proposition \ref{prop brownian}.

\begin{proof}[Proof of Proposition \ref{prop brownian}]
\label{proof of theorem brownian motion}
Let $T > 0$, $N \in \mathbb{N}$, and $(c_{\word{w}})_{\word{w} \in \mathcal{W}_{\le N}^{*}} \in \mathbb{R}^{\mathcal{W}_{\le N}^{*}}$ be such that 
\[
F:=\sum_{\word{w} \in  \mathcal{W}_{\le N}^{*}} c_{\word{w}} \widehat{\mathbb{W}}^{\word{w}}_T = 0 \text{ a.s.}.
\]
For all $t \le T$ we have $0=\mathbb{E}\left[F ~|~\mathcal{F}_t\right]= \sum_{\word{w} \in \mathcal{W}_{\le N}^{*}} c_{\word{w}} \mathbb{E}\left[\widehat{\mathbb{W}}^{\word{w}}_T|\mathcal{F}_t\right]$. By Lemma \ref{esp cond} and Remark \ref{esp cond emptyword} we can write

\begin{align*}
\mathbb{E}\left[F|\mathcal{F}_t\right] &= \sum_{\word{w} \in \mathcal{W}_{\le N}} c_{\word{w}} \mathbbm{1}_{\{ \word{w} \in \mathcal{W}_{\le N}^{*} \}} \left( \widehat{\mathbb{W}}_{t}^{\word{w}}+ \sum_{\word{w'} \in \mathcal{W}_{\le N}} \lambda_{\word{w},\word{w'}} \mathbbm{1}_{\{ \|\word{w'}\| < \|\word{w}\| \}} \widehat{\mathbb{W}}_{t}^{\word{w'}} \right).
\end{align*}
We define $\hat{c}_{\word{w}} = c_{\word{w}}\mathbbm{1}_{\{\word{w} \in \mathcal{W}_{\le N}^{*}\}}$ and $\left(P \hat{c}\right)_{\word{w'}} = \sum_{\word{w} \in \mathcal{W}_{\le N}} P_{\word{w'},\word{w}} \hat{c}_{\word{w}}$ with 
\[
\left(P_{\word{w'},\word{w}} = \mathbbm{1}_{\{ \word{w}=\word{w'}\}} + \lambda_{\word{w},\word{w'}} \mathbbm{1}_{\{ \|\word{w'}\| < \|\word{w}\| \}}\right)_{\word{w'},\word{w} \in \mathcal{W}_{\le N}}.
\]
We can write

\[
\mathbb{E}\left[F|\mathcal{F}_t\right]= \sum_{\word{w} \in \mathcal{W}_{\le N}} \hat{c}_{\word{w}} \sum_{\word{w'} \in \mathcal{W}_{\le N}} \left( \mathbbm{1}_{\{ \word{w}=\word{w'}\}} + \lambda_{\word{w},\word{w'}} \mathbbm{1}_{\{ \|\word{w'}\| < \|\word{w}\| \}} \right) \widehat{\mathbb{W}}_{t}^{\word{w'}} = \sum_{\word{w'} \in \mathcal{W}_{\le N}} \left(P\hat{c}\right)_{\word{w'}} \widehat{\mathbb{W}}_{t}^{\word{w'}}.
\]
Then for all $t \le T$, $ \sum_{\word{w'} \in \mathcal{W}_{\le N}} \left(P\hat{c}\right)_{\word{w'}} \widehat{\mathbb{W}}_{t}^{\word{w'}}=0$ almost surely. Lemma \ref{liberté trajectorielle} implies that $\left(P\hat{c}\right)_{\word{w'}} = 0$ for all $\word{w'} \in \mathcal{W}_{\le N}$. If we sort the words according to lexicographic order on $(\|\word{w}\|,\word{w})$, then it is clear that $P$ is an upper triangular matrix with diagonal coefficients equal to $1$. Therefore $P$ is invertible and $\hat{c}_{\word{w}}=0$ for all $\word{w} \in \mathcal{W}_{\le N}$. Finally, we obtain $c_{\word{w}}=0$ for all $\word{w} \in \mathcal{W}_{\le N}$.

\end{proof}

\begin{proof}[Proof of Lemma \ref{liberté trajectorielle}]
We prove this result by induction on $N \in \mathbb{N}$.
\newline
\\
\underline{Initialization step.}
Let $N=0$. The conclusion is trivial since for every $t \le T$, $\widehat{\mathbb{W}}^{\emptyword}_t=1$.
\newline
\\
\underline{Induction step.}
We assume that the property holds for $N \in \mathbb{N}$. Let $(c_{\word{w}})_{\word{w} \in \mathcal{W}_{\le N+1}}$ be such that for all $t \le T$ we have $\sum_{\word{w} \in \mathcal{W}_{\le N+1}} c_{\word{w}} \widehat{\mathbb{W}}^{\word{w}}_t = 0$ almost surely. We have $\mathcal{W}_{\le N+1} = \{\emptyword\} \cup \{\word{w0} ~|~ \word{w} \in \mathcal{W}_{\le N} \} \cup \{\word{wi} ~|~ \word{w} \in \mathcal{W}_{\le N}, \word{i} \in \{ \word{1}, \cdots, \word{d} \} \}$. Let $t \le T$. We can write:

\begin{align}
\label{eq recurrence}
0&\overset{\text{a.s.}}{=}\sum_{\word{w} \in \mathcal{W}_{\le N+1}} c_{\word{w}} \widehat{\mathbb{W}}^{\word{w}}_t= c_{\emptyword} + \sum_{\word{w} \in \mathcal{W}_{\le N}} c_{\word{w0}} \widehat{\mathbb{W}}^{\word{w0}}_t + \sum_{\word{i} \in \{\word{1},\cdots,\word{d}\}} \sum_{\word{w} \in \mathcal{W}_{\le N}} c_{\word{wi}} \widehat{\mathbb{W}}^{\word{wi}}_t.
\end{align}
For $\word{j} \in \{\word{1},\cdots,\word{d}\}$, we obtain:

\begin{align*}
0 &\overset{\text{a.s.}}{=}\left<c_{\emptyword} + \sum_{\word{w} \in \mathcal{W}_{\le N}} c_{\word{w0}} \widehat{\mathbb{W}}^{\word{w0}}_{.} + \sum_{\word{i} \in \{\word{1},\cdots,\word{d}\}}\sum_{\word{w} \in \mathcal{W}_{\le N}} c_{\word{wi}} \widehat{\mathbb{W}}^{\word{wi}}_{.}, W_{.}^{\word{j}}\right>_t = \sum_{\word{w} \in \mathcal{W}_{\le N}} c_{\word{wj}} \left<\widehat{\mathbb{W}}^{\word{wj}}_{.},W_{.}^{\word{j}}\right>_t \\
&= \sum_{\word{w} \in \mathcal{W}_{\le N}} c_{\word{wj}} \left<\int_{0}^{.}\widehat{\mathbb{W}}^{\word{w}}_u \circ dW_u^{\word{j}}, W_{.}^{\word{j}} \right>_t= \sum_{\word{w} \in \mathcal{W}_{\le N}} c_{\word{wj}} \int_{0}^{t} \widehat{\mathbb{W}}^{\word{w}}_u du = \int_{0}^{t} \sum_{\word{w} \in \mathcal{W}_{\le N}} c_{\word{wj}} \widehat{\mathbb{W}}^{\word{w}}_u du.
\end{align*}
It holds for all $t \le T$, and thanks to the continuity of the sample paths of $\left(\sum_{\word{w} \in \mathcal{W}_{\le N}} c_{\word{wi}} \widehat{\mathbb{W}}^{\word{w}}_t\right)_{t \ge 0}$ we get $\sum_{\word{w} \in \mathcal{W}_{\le N}} c_{\word{wj}} \widehat{\mathbb{W}}^{\word{w}}_t \overset{ \text{a.s.}}{=} 0$ for all $t \le T$. The induction hypothesis yields $c_{\word{wj}}=0$, for all $\word{w} \in \mathcal{W}_{\le N}$ and for all $\word{j} \in \{\word{1},\cdots,\word{d}\}$. By plugging these coefficients in \eqref{eq recurrence} we obtain

\[
\forall t \le T,  -c_{\emptyword} \overset{\text{a.s.}}{=} \sum_{\word{w} \in \mathcal{W}_{\le N}}c_{\word{w0}} \widehat{\mathbb{W}}^{\word{w0}}_t= \int_{0}^{t} \sum_{\word{w} \in \mathcal{W}_{\le N}} c_{\word{w0}} \widehat{\mathbb{W}}^{\word{w}}_u du.
\]
We can again use the continuity of $\left(\sum_{\word{w} \in \mathcal{W}_{\le N}} c_{\word{w0}} \widehat{\mathbb{W}}^{\word{w}}_t\right)_{t \ge 0}$ and conclude that $c_{\word{w0}}=0$ for all $\word{w} \in \mathcal{W}_{\le N}$. Finally, we have $c_{\emptyword}=0$.

\end{proof}

We now provide the proof of Lemma \ref{lemme utile strato}

\begin{proof}[Proof of Lemma \ref{lemme utile strato}]

Let $\word{i} \in \{\word{0},\cdots,\word{d} \}$, $\word{j} \in \{\word{1},\cdots,\word{d} \}$, and $\word{w} \in \mathcal{W}_{< \infty}$. Then

\begin{align*}
    \widehat{\mathbb{W}}_{t}^{\word{wij}}&=\int_{0}^{t} \widehat{\mathbb{W}}_{s}^{\word{wi}} \circ dW_{s}^{\word{j}} =\int_{0}^{t} \widehat{\mathbb{W}}_{s}^{\word{wi}} dW_{s}^{\word{j}} + \frac{1}{2} \left< \widehat{\mathbb{W}}_{\textbf{.}}^{\word{wi}},W_{\textbf{.}}^{\word{j}}\right>_t \\
 &= \int_{0}^{t} \widehat{\mathbb{W}}_{s}^{\word{wi}} dW_{s}^{\word{j}} + \frac{1}{2}\left<\int_{0}^{\textbf{.}} \widehat{\mathbb{W}}_{u}^{\word{w}} dW_{u}^{\word{i}} + \frac{1}{2}\left< \widehat{\mathbb{W}}_{.}^{\word{w}}, W_{.}^{\word{i}}\right>_{\textbf{.}},W^{\word{j}}_{\textbf{.}}\right>_{t} \\ &=  \int_{0}^{t} \widehat{\mathbb{W}}_{s}^{\word{wi}} dW_{s}^{\word{j}} + \frac{1}{2} \int_{0}^{t} \widehat{\mathbb{W}}_{s}^{\word{w}} d\left< W_{\textbf{.}}^{\word{i}},W_{\textbf{.}}^{\word{j}}\right>_s = \int_{0}^{t} \widehat{\mathbb{W}}_{s}^{\word{wi}} dW_{s}^{\word{j}} + \frac{1}{2} \mathbbm{1}_{\word{i}=\word{j}} \int_{0}^{t} \widehat{\mathbb{W}}^{\word{w}}_s ds.
\end{align*}

\end{proof}

\begin{proof}[Proof of Lemma \ref{esp cond}]
We prove this result by induction on the length of the word $N$. Because the signature of any Brownian motion is defined with the Stratonovich integration rule, the behavior of $\mathbb{E}\left[\widehat{\mathbb{W}}^{\word{w}}_T | \mathcal{F}_t\right]$ for a given word $\word{w}$ of length non smaller than $2$ will depend strongly on the last two letters of $\word{w}$. To address this issue, we need to initialize the inductive proof by showing that the property is true for $N \in \{1,2\}$.
\newline
\\
\underline{Initialization step.}
\newline
\\
\textbf{First case: $N=1$.}
\newline
Let $0 \le t \le T$. We have $\mathbb{E}\left[\widehat{\mathbb{W}}^{\word{0}}_T|\mathcal{F}_t\right]=T=T \widehat{\mathbb{W}}^{\emptyword}_t$. For $\word{i} \in \{ \word{1},\cdots,\word{d} \}$,  $\mathbb{E}\left[\widehat{\mathbb{W}}^{\word{i}}_T|\mathcal{F}_t\right]=\mathbb{E}\left[W_{T}^{\word{i}}|\mathcal{F}_t\right]=W_{t}^{\word{i}}=\widehat{\mathbb{W}}^{\word{i}}_t$.
\newline
\newline
\textbf{Second case: $N=2$.}
\newline
Let $0 \le t \le T$. We have $\mathcal{W}_2= \{\word{00} \} \cup \{ \word{ij}~|~ \word{j} \neq \word{0} , \word{i} \neq \word{j}\} \cup \{\word{i0} ~|~ \word{i} \neq \word{0} \} \cup \{ \word{ii} ~|~ \word{i} \neq \word{0} \}$. Let $\word{w}=\word{00}$. Because $\widehat{\mathbb{W}}^{\word{00}}_T=\frac{T^2}{2}$ is deterministic, we can use the same argument as for $\widehat{\mathbb{W}}_{T}^{\word{0}}$. 
\newline
Let $\word{w}=\word{i0}$ with $\word{i} \neq \word{0}$. We have $\widehat{\mathbb{W}}^{\word{i0}}_T = \int_{0}^{T} W_u^{\word{i}} du$, and so $\mathbb{E}\left[\widehat{\mathbb{W}}^{\word{i0}}_T|\mathcal{F}_t\right]=\int_{0}^{t}W_u^{\word{i}}du + W_t^{\word{i}}(T-t) = \int_{0}^{t}W_u^{\word{i}}du + T W_t^{i} - \int_{0}^{t}u dW_u^{\word{i}} - \int_{0}^{t} W_u^{\word{i}} du=T\widehat{\mathbb{W}}^{\word{i}}_t - \widehat{\mathbb{W}}^{\word{0i}}_t$. 
\newline
Let $\word{w}=\word{ij}$, with $\word{j} \ne \word{0}$ and $\word{i} \neq \word{j}$. We have $\mathbb{E}\left[\widehat{\mathbb{W}}^{\word{ij}}_T | \mathcal{F}_t\right]=\mathbb{E}\left[\int_{0}^{T}\widehat{W}_{u}^{\word{i}} \circ dW_u^{\word{j}} | \mathcal{F}_t\right]$. For any values of $\word{i}$, we have $\int_{0}^{t}\widehat{W}_{u}^{\word{i}} \circ dW_u^{\word{j}}=\int_{0}^{t}\widehat{W}_{u}^{\word{i}} dW_u^{\word{j}}$ (see Lemma \ref{lemme utile strato}) and thus $\left(\int_{0}^{t}\widehat{W}_{u}^{\word{i}} \circ dW_u^{\word{j}}\right)_{t \le T}$ is a martingale. Then $\mathbb{E}\left[\widehat{\mathbb{W}}^{\word{ij}}_T | \mathcal{F}_t\right]=\int_{0}^{t}\widehat{W}_{u}^{\word{i}} \circ dW_u^{\word{j}}=\widehat{\mathbb{W}}^{\word{ij}}_t$. Let $\word{w}=\word{ii}$ with $\word{i} \neq \word{0}$. Then $\mathbb{E}\left[\widehat{\mathbb{W}}^{\word{ii}}_T | \mathcal{F}_t\right]= \mathbb{E}\left[\frac{\left(W_{T}^{\word{i}}\right)^2}{2}| \mathcal{F}_t\right]=\frac{\left(W_t^{\word{i}}\right)^{2}}{2} + \frac{T-t}{2}=\widehat{\mathbb{W}}^{\word{ii}}_t + \frac{T}{2}\widehat{\mathbb{W}}^{\emptyword}_t - \frac{1}{2}\widehat{\mathbb{W}}^{\word{0}}_t$.
\newline
\\
\underline{Induction step.}
Let $N \ge 2$ be such that the property holds for all $n \le N$. Let $T > 0$ and $t \le T$. We have $\mathcal{W}_{N+1} = \{\word{\gamma 00} ~|~\word{\gamma} \in \mathcal{W}_{N-1} \} \cup \{ \word{\gamma ij}~|~ \word{\gamma} \in \mathcal{W}_{N-1}, \word{j} \neq \word{0} , \word{i} \neq \word{j}\} \cup \{\word{\gamma i0} ~|~ \word{\gamma} \in \mathcal{W}_{N-1}, \word{i} \neq \word{0} \} \cup \{ \word{\gamma ii} ~|~ \word{\gamma} \in \mathcal{W}_{N-1}, \word{i} \neq \word{0} \}$. Let $\gamma \in \mathcal{W}_{N-1}$.
\newline
\newline
\textbf{First case}: $\word{\gamma ij}, \word{j} \neq \word{0}, \word{i} \neq \word{j}.$
\newline
We have $\widehat{\mathbb{W}}^{\word{\gamma ij}}_T=\int_{0}^{T}\widehat{\mathbb{W}}^{\word{\gamma i}}_u \circ dW_u^{\word{j}}$, and by Lemma \ref{lemme utile strato}, we have $\int_{0}^{t}\widehat{\mathbb{W}}^{\word{\gamma i}}_u \circ dW_u^{\word{j}} = \int_{0}^{t}\widehat{\mathbb{W}}^{\word{\gamma i}}_u dW_u^{\word{j}}$. Then $\mathbb{E}\left[\widehat{\mathbb{W}}^{\word{\gamma ij}}_T|\mathcal{F}_t \right]=\widehat{\mathbb{W}}^{\word{\gamma ij}}_t$.
\newline
\\
\textbf{Second case: $\word{\gamma00}$.}
\newline
We know that $\widehat{\mathbb{W}}^{\word{\gamma00}}_T=\int_{0}^{T}\widehat{\mathbb{W}}^{\word{\gamma0}}_u du$, then 

\begin{align*}
\mathbb{E}\left[\widehat{\mathbb{W}}^{\word{\gamma00}}_T | \mathcal{F}_t\right]=  \mathbb{E}\left[\int_{0}^{T} \widehat{\mathbb{W}}^{\word{\gamma0}}_u du | \mathcal{F}_t\right] =\int_{0}^{t}\widehat{\mathbb{W}}^{\word{\gamma0}}_udu + \int_{t}^{T} \mathbb{E}\left[\widehat{\mathbb{W}}^{\word{\gamma0}}_u|\mathcal{F}_t \right]du =\widehat{\mathbb{W}}^{\word{\gamma00}}_t + \int_{t}^{T} \mathbb{E}\left[\widehat{\mathbb{W}}^{\word{\gamma0}}_u|\mathcal{F}_t \right]du.
\end{align*}
The induction hypothesis yields $\mathbb{E}\left[\widehat{\mathbb{W}}^{\word{\gamma0}}_u|\mathcal{F}_t \right] = \sum_{k=1}^{M^{\word{\gamma 0}}}P_{k,\word{\gamma0}}(u)\widehat{\mathbb{W}}^{\word{v}_{k,\word{\gamma0}}}_t$ for all $u \ge t$. Then, if $Q_{k,\word{\gamma0}}$ denotes the antiderivative of the polynomial $P_{k,\word{\gamma0}}$ with no constant term, we can write 

\begin{align*}
\mathbb{E}\left[\widehat{\mathbb{W}}^{\word{\gamma00}}_T|\mathcal{F}_t \right] &=\widehat{\mathbb{W}}^{\word{\gamma00}}_t + \sum_{k=1}^{M^{\word{\gamma0}}} \widehat{\mathbb{W}}^{\word{v}_{k,\word{\gamma0}}}_t \left[Q_{k,\word{\gamma0}}(T) - Q_{k,\word{\gamma0}}(t)\right] \\
&= \widehat{\mathbb{W}}^{\word{\gamma00}}_t + \sum_{k=1}^{M^{\word{\gamma0}}} \widehat{\mathbb{W}}^{\word{v_{k,\word{\gamma0}}}}_tQ_{k,\word{\gamma0}}(T) - \sum_{k=1}^{M^{\word{\gamma0}}} \widehat{\mathbb{W}}^{\word{v}_{k,\word{\gamma0}}}_t Q_{k,\word{\gamma0}}(t).
\end{align*}
Let $k \le M^{\word{\gamma0}}$. We have $d_{k,\word{\gamma0}}:=\deg(Q_{k,\word{\gamma0}})=\deg(P_{k,\word{\gamma0}})+1$. By writing $Q_{k,\word{\gamma0}}(t)=\sum_{m=1}^{d_{k,\word{\gamma0}}} a_{m,k} t^{m} = \sum_{m=1}^{d_{k,\word{\gamma0}}} a_{m,k} m! \widehat{\mathbb{W}}^{\word{0}_{m}}_t$, and using the shuffle product property of the signature, we get 

\begin{align*}
Q_{k,\word{\gamma0}}(t)\widehat{\mathbb{W}}^{\word{v}_{k,\word{\gamma0}}}_t = \sum_{m=1}^{d_{k,\word{\gamma0}}} a_{m,k} m! \widehat{\mathbb{W}}^{\word{0}_{m}}_t\widehat{\mathbb{W}}^{\word{v}_{k,\word{\gamma0}}}_t &=\sum_{m=1}^{d_{k,\word{\gamma0}}} a_{m,k} m! \left<\word{0}_{m},\widehat{\mathbb{W}}_t \right>\left<\word{v}_{k,\word{\gamma0}},\widehat{\mathbb{W}}_t\right> \\
&=\sum_{m=1}^{d_{k,\word{\gamma0}}} a_{m,k} m! \left<\word{0}_{m} \shuffle \word{v}_{k,\word{\gamma0}},\widehat{\mathbb{W}}_t\right>.
\end{align*}
Finally, we have 

\[
\mathbb{E}\left[\widehat{\mathbb{W}}^{\word{\gamma00}}_T | \mathcal{F}_t\right]=\widehat{\mathbb{W}}^{\word{\gamma00}}_t + \sum_{k=1}^{M^{\word{\gamma0}}} \widehat{\mathbb{W}}^{\word{v}_{k,\word{\gamma0}}}_t Q_{k,\word{\gamma0}}(T) - \sum_{k=1}^{M^{\word{\gamma0}}} \sum_{m=1}^{d_{k,\word{\gamma0}}} a_{m,k} m! \left<\word{0}_{m} \shuffle \word{v}_{k,\word{\gamma0}},\widehat{\mathbb{W}}_t\right>.
\]
For $k \le M^{\word{\gamma0}}$, the induction hypothesis yields $\deg(P_{k,\word{\gamma0}}) + \| \word{v}_{k,\word{\gamma0}} \| \le N$ so $\deg(Q_{k,\word{\gamma0}})+ \|\word{v}_{k,\word{\gamma0}}\| = \deg(P_{k,\word{\gamma0}})+ 1+\|\word{v}_{k,\word{\gamma0}}\| \le N+1$. Moreover, for $m \le d_{k,\word{\gamma0}}=\text{deg}(Q_{k,\word{\gamma 0}})$, $\word{0}_{m} \shuffle \word{v}_{k,\word{\gamma0}}$ is a linear combination of words of length $m + \|\word{v}_{k,\word{\gamma0}}\| \le \|\word{v}_{k,\word{\gamma0}}\|+  \deg(P_{k,\word{\gamma0}}) + 1 \le N+1$.
\newline
\newline
\textbf{Third case: $\word{\gamma i0}, \word{i} \neq \word{0}$.} 
\newline
We have 

\[
\mathbb{E}\left[\widehat{\mathbb{W}}^{\word{\gamma i0}}_T | \mathcal{F}_t\right]=  \mathbb{E}\left[\int_{0}^{T} \widehat{\mathbb{W}}^{\word{\gamma i}}_u du | \mathcal{F}_t\right] =\int_{0}^{t}\widehat{\mathbb{W}}^{\word{\gamma i}}_udu + \int_{t}^{T} \mathbb{E}\left[\widehat{\mathbb{W}}^{\word{\gamma i}}_u|\mathcal{F}_t \right]du =\widehat{\mathbb{W}}^{\word{\gamma i0}}_t + \int_{t}^{T} \mathbb{E}\left[\widehat{\mathbb{W}}^{\word{\gamma i}}_u|\mathcal{F}_t \right]du.
\] 
The induction hypothesis yields $\mathbb{E}\left[\widehat{\mathbb{W}}^{\word{\gamma i0}}_T | \mathcal{F}_t\right]=\widehat{\mathbb{W}}^{\word{\gamma i0}}_t + \int_{t}^{T} \left(\widehat{\mathbb{W}}^{\word{\gamma i}}_t +  \sum_{k=1}^{M^{\word{\gamma i}}}P_{k,\word{\gamma i}}(u)\widehat{\mathbb{W}}^{\word{v}_{k,\word{\gamma i}}}_t\right)du$. If $Q_{k,\word{\gamma i}}$ denotes the antiderivative of the polynomial $P_{k,\word{\gamma i}}$ with no constant term, and $Q_{k,\word{\gamma i}}(t) = \sum_{m=1}^{d_{k, \word{\gamma i}}} a_{m,k} t^m$, we can write 

\begin{align*}
\mathbb{E}\left[\widehat{\mathbb{W}}^{\word{\gamma i0}}_T | \mathcal{F}_t\right]&= \widehat{\mathbb{W}}^{\word{\gamma i0}}_t + \widehat{\mathbb{W}}^{\word{\gamma i}}_t(T-t) + \sum_{k=1}^{M^{\word{\gamma i}}} \widehat{\mathbb{W}}^{\word{v}_{k,\word{\gamma i}}}_t \left[Q_{k,\word{\gamma i}}(T) - Q_{k,\word{\gamma i}}(t)\right] \\
&= \widehat{\mathbb{W}}^{\word{\gamma i0}}_t + \widehat{\mathbb{W}}^{\word{\gamma i}}_{t} T - t\widehat{\mathbb{W}}^{\word{\gamma i}}_t +  \sum_{k=1}^{M^{\word{\gamma i}}} \widehat{\mathbb{W}}^{\word{v}_{k,\word{\gamma i}}}_t Q_{k,\word{\gamma i}}(T) - \sum_{k=1}^{M^{\word{\gamma i}}} \widehat{\mathbb{W}}^{\word{v}_{k,\word{\gamma i}}}_t Q_{k,\word{\gamma i}}(t) \\
&=\widehat{\mathbb{W}}^{\word{\gamma i0}}_t + \widehat{\mathbb{W}}^{\word{\gamma i}}_tT - \left<\word{0},\widehat{\mathbb{W}}_t\right> \left<\word{\gamma i},\widehat{\mathbb{W}}_t\right> +  \sum_{k=1}^{M^{\word{\gamma i}}} \widehat{\mathbb{W}}^{\word{v}_{k,\word{\gamma i}}}_t  Q_{k,\word{\gamma i}}(T) - \sum_{k=1}^{M^{\word{\gamma i}}} \widehat{\mathbb{W}}^{\word{v}_{k,\word{\gamma i}}}_t Q_{k,\word{\gamma i}}(t).
\end{align*}
By using the shuffle product property of the signature on $\left<\word{0},\widehat{\mathbb{W}}_t\right> \left<\word{\gamma i},\widehat{\mathbb{W}}_t\right>$, we can rewrite:

\begin{align*}
\mathbb{E}\left[\widehat{\mathbb{W}}^{\word{\gamma i0}}_T | \mathcal{F}_t\right]&=\widehat{\mathbb{W}}^{\word{\gamma i0}}_t + \widehat{\mathbb{W}}^{\word{\gamma i}}_tT - \left<\word{0} \shuffle \word{\gamma i},\widehat{\mathbb{W}}_t\right>  +  \sum_{k=1}^{M^{\word{\gamma i}}} \widehat{\mathbb{W}}^{\word{v}_{k,\word{\gamma i}}}_t Q_{k,\word{\gamma i}}(T) - \sum_{k=1}^{M^{\word{\gamma i}}} \widehat{\mathbb{W}}^{\word{v}_{k,\word{\gamma i}}}_t Q_{k,\word{\gamma i}}(t).
\end{align*}
Using the same arguments as in the previous case we finally have:

\begin{align*}
&\mathbb{E}\left[\widehat{\mathbb{W}}^{\word{\gamma i0}}_T | \mathcal{F}_t\right] \\
& \qquad=\widehat{\mathbb{W}}^{\word{\gamma i0}}_t + \widehat{\mathbb{W}}^{\word{\gamma i}}_tT - \left<\word{0} \shuffle \word{\gamma i},\widehat{\mathbb{W}}_t\right> + \sum_{k=1}^{M^{\word{\gamma i}}} \widehat{\mathbb{W}}^{\word{v}_{k,\word{\gamma i}}}_t Q_{k,\word{\gamma i}}(T) - \sum_{k=1}^{M^{\word{\gamma i}}} \sum_{m=1}^{d_{k,\word{\gamma i}}} a_{m,k} m!  \left<\word{0}_{m} \shuffle \word{v}_{k,\word{\gamma i}},\widehat{\mathbb{W}}_t\right>.
\end{align*}
For $k \le M^{\word{\gamma i}}$, the induction hypothesis yields $\deg(P_{k,\word{\gamma i}})+ \|\word{v}_{k,\word{\gamma i}}\| \le N$, then  $\deg(Q_{k,\word{\gamma i}})+\|\word{v}_{k,\word{\gamma i}}\| \le N+1$. Moreover for $m \le d_{k,\word{\gamma i}}=\text{deg}(Q_{k,\word{\gamma i}})$, $\word{0}_{m} \shuffle \word{v}_{k,\word{\gamma i}}$ is a linear combination of words of length $m + \|\word{v}_{k,\word{\gamma i}}\|$ with  $m + \|\word{v}_{k,\word{\gamma i}}\| \le \|\word{v}_{k,\word{\gamma i}}\| + \deg(P_{k,\word{\gamma i}}) + 1 \le N+1$. With the same arguments, we know that $\word{0} \shuffle \word{\gamma i}$ is a linear combination of words of length $N+1$.
\newline
\\
\textbf{Fourth case: $\word{w}=\word{\gamma ii}, \word{i} \neq \word{0}$.}
\newline
By Lemma \ref{lemme utile strato}, we know that $\widehat{\mathbb{W}}^{\word{\gamma ii}}_T=\int_{0}^{T}\widehat{\mathbb{W}}^{\word{\gamma i}}_u \circ dW_u^{i} = \int_{0}^{T}\widehat{\mathbb{W}}^{\word{\gamma i}}_u dW_u^{i} + \frac{1}{2}\int_{0}^{T}\widehat{\mathbb{W}}^{\word{\gamma}}_udu$. Then 

\begin{align*}
\mathbb{E}\left[\widehat{\mathbb{W}}^{\word{\gamma ii}}_T | \mathcal{F}_t\right]&=  \mathbb{E}\left[\int_{0}^{T} \widehat{\mathbb{W}}^{\word{\gamma i}}_u dW_u^{\word{i}} + \frac{1}{2}\int_{0}^{T}\widehat{\mathbb{W}}^{\word{\gamma}}_udu | \mathcal{F}_t\right] \\
&=\int_{0}^{t}\widehat{\mathbb{W}}^{\word{\gamma i}}_udW_u^{\word{i}} + \frac{1}{2}\int_{0}^{t} \widehat{\mathbb{W}}^{\word{\gamma}}_udu + \frac{1}{2}\int_{t}^{T} \mathbb{E}\left[\widehat{\mathbb{W}}^{\word{\gamma}}_u|\mathcal{F}_t \right]du \\
&=\widehat{\mathbb{W}}^{\word{\gamma ii}}_t + \frac{1}{2}\int_{t}^{T} \mathbb{E}\left[\widehat{\mathbb{W}}^{\word{\gamma}}_u|\mathcal{F}_t \right]du.
\end{align*}
Using the induction hypothesis, we know that regardless $\word{\gamma}$ ends with $\word{0}$ or not, for every $u \ge t$, $\mathbb{E}\left[\widehat{\mathbb{W}}^{\word{\gamma}}_u|\mathcal{F}_t\right]=\sum_{k=1}^{M^{\word{\gamma}}}P_{k,\word{\gamma}}(u)\widehat{\mathbb{W}}^{\word{v}_{k,\word{\gamma}}}_t$ with $\deg(P_{k,\word{\gamma}}) + \|\word{v}_{k,\word{\gamma}}\| \le \|\word{\gamma}\| =N - 1$. If $Q_{k,\word{\gamma}}$ denotes the antiderivative of the polynomial $P_{k,\word{\gamma}}$ with no constant term, and $Q_{k,\word{\gamma}}(t) = \sum_{m=1}^{d_{k, \word{\gamma}}} a_{m,k} t^m$, we can write 

\begin{align*}
\mathbb{E}\left[\widehat{\mathbb{W}}^{\word{\gamma ii}}_T | \mathcal{F}_t\right]&=\widehat{\mathbb{W}}^{\word{\gamma ii}}_t + \frac{1}{2}\int_{t}^{T} \sum_{k=1}^{M^{\word{\gamma}}}P_{k,\word{\gamma}}(u)\widehat{\mathbb{W}}^{\word{v}_{k,\word{\gamma}}}_t du \\
&= \widehat{\mathbb{W}}^{\word{\gamma ii}}_t + \frac{1}{2}\sum_{k=1}^{M^{\word{\gamma}}} \widehat{\mathbb{W}}^{\word{v}_{k,\word{\gamma}}}_t Q_{k,\word{\gamma}}(T) - \frac{1}{2}\sum_{k=1}^{M^{\word{\gamma}}} \sum_{m=1}^{d_{k,\word{\gamma}}} a_{m,k} m! \left< \word{0}_{m} \shuffle \word{v}_{k,\word{\gamma}} ,\widehat{\mathbb{W}}_t\right>.
\end{align*}
For $k \le M^{\word{\gamma}}$, we know that $\deg(P_{k,\word{\gamma}})+ \|\word{v}_{k,\word{\gamma}}\| \le N-1$ so that  $\deg(Q_{k,\word{\gamma}}) + \|\word{v}_{k,\word{\gamma}}\|  \le N $. Moreover for $m \le d_{k,\word{\gamma}}=\text{deg}(P_{k,\word{\gamma}})$, $\word{0}_{m} \shuffle \word{v}_{k,\word{\gamma}}$ is a linear combination of words of length $m + \|\word{v_{k,\word{\gamma}}}\| \le \|\word{v_{k,\word{\gamma}}}\| +\deg(P_{k}^{\word{\gamma}}) + 1 \le N$.

\end{proof}

\subsection{Proof of Theorem \ref{main result 3}}

Let $(X_t)_{t \le T}$ be the solution to \eqref{Ito form}, $N \in \mathbb{N}^{*}$ be a truncation order and $B \subset \mathcal{W}_{\le N}$ be a basis of words for $\mathcal{W}_{N}$. In the following, $\|.\|$ stands for the classical Euclidean norm and $\|.\|_{\rm CC}$ stands for the Carnot-Carathéodory norm (see \citep{friz2010multidimensional}[Theorem 7.32] for more details). Since the signature is invariant under translation, and by the same arguments as in the proof of Theorem \ref{main result 2}, we assume without loss of generality that $X$ starts from $0$ at $t=0$. Let $p \in (2,3)$. We use the rough path notation introduced in the proof of Theorem \ref{main result 2}. In particular, $S_2^1$ denotes the canonical
$2$-step lift of a path of finite variation, and $S_N^p$ denotes the
Lyons extension map from step $2$ to step $N$ in the $p$-variation rough path topology.
\newline
\newline
\textbf{First case: $N=1$.}
\newline
Because the terminal value of $X^{(n)}$ is the same as the terminal value of $X$ for all $n \in \mathbb{N}^*$, we then have $\widehat{\mathbb{X}^{(n)}}_{T}^{\le 1} = \widehat{\mathbb{X}}_{T}^{\le 1}$ for all $n \in \mathbb{N}^{*}$.
\newline
\newline
\textbf{Second case: $N \ge 2$.}
\newline
By applying \citep{coutin2005semi}[Proposition 2] it follows that 

\begin{align}
\label{conv_S2}
S_{2}^{1}(\widehat{X}^{(n)}) \underset{n \rightarrow \infty}{\overset{\mathbb{P}}{\longrightarrow}} \widehat{\mathbb{X}}^{\le 2}  
\end{align}
for the $p$-variation rough path topology. We check that
\begin{align}
\label{chasles signature}
S^{1}_N = S^{p}_{N} \circ S^{1}_{2}.
\end{align} 
This equality means that computing the $N$-step truncated signature of a $\mathbb{R}^{d+1}$-valued path of finite variation is equivalent to computing the $N$-step truncated signature of its $2$-step truncated signature. Let us quickly prove \eqref{chasles signature}. We denote by $\pi_{0,2} : G^{N}(\mathbb{R}^{d+1}) \rightarrow G^{2}(\mathbb{R}^{d+1})$ the canonical projection (see \citep{friz2010multidimensional} after Remark 7.3). It follows that $\pi_{0,2} \circ S^{1}_{N} = S^{1}_2$. Since $\mathcal{C}^{1-\text{var}}_{o}\left([0,T],G^{2}\left(\mathbb{R}^{d+1}\right)\right) \subset \mathcal{C}^{p-\text{var}}_{o}\left([0,T],G^{2}\left(\mathbb{R}^{d+1}\right)\right)$, we can compose both terms of the previous equality with $S^{p}_N$ and we obtain $S_{N}^{p} \circ \pi_{0,2} \circ S_{N}^{1} = S_{N}^{p} \circ S_{2}^{1}$. By \citep{friz2010multidimensional}[Theorem 9.5] we know that $S_{N}^{p} \circ \pi_{0,2} = id$. Hence we obtain \eqref{chasles signature}. By the continuity of $S^{p}_N$ for the $p$-variation rough path metric (see \citep{friz2010multidimensional}[Theorem 9.5]) and using \eqref{conv_S2},\eqref{chasles signature}, we get 

\begin{align}
\label{convergence proba}
    \widehat{\mathbb{X}^{(n)}}^{\le N}&=S_{N}^{1}(\widehat{X}^{(n)})= S_{N}^{p}(S_{2}^{1}(\widehat{X}^{(n)})) \overset{\mathbb{P}}{\underset{n \rightarrow \infty}{\longrightarrow}} S_{N}^{p}\left(\widehat{\mathbb{X}}^{\le 2}\right)=\widehat{\mathbb{X}}^{\le N}
\end{align}
for the $p$-variation rough path topology. It follows that $\widehat{\mathbb{X}^{(n)}}_{T}^{\le N} \overset{\mathbb{P}}{\underset{n \rightarrow \infty}{\longrightarrow}} \widehat{\mathbb{X}}^{\le N}_{T}$ for $d_{\text{CC}}$. Using classical ball-box estimates \citep{friz2010multidimensional}[Proposition 7.49] we know that

\begin{align*}
\left\|\widehat{\mathbb{X}^{(n)}}_{T}^{\le N} - 
\widehat{\mathbb{X}}^{\le N}_{T} \right\| \lesssim \max \left(d_{\text{CC}}\left(\widehat{\mathbb{X}^{(n)}}_{T}^{\le N},\widehat{\mathbb{X}}^{\le N}_{T}\right) \max \left(1,\|\widehat{\mathbb{X}}^{\le N}_{T}\|_{\text{CC}}^{N-1}\right), d_{\text{CC}}\left(\widehat{\mathbb{X}^{(n)}}_{T}^{\le N},\widehat{\mathbb{X}}^{\le N}_{T} \right)^{N} \right).   
\end{align*}
By \citep{friz2010multidimensional}[Proposition 7.45], we know that $\|\widehat{\mathbb{X}}^{\le N}_{T}\|_{\text{CC}} \lesssim \max (\|\widehat{\mathbb{X}}^{\le N}_{T}\|,\|\widehat{\mathbb{X}}^{\le N}_{T}\|^{\frac{1}{N}})$ so that $\|\widehat{\mathbb{X}}^{\le N}_{T}\|_{\text{CC}}^{N-1} < \infty$ almost surely. We obtain $\widehat{\mathbb{X}^{(n)}}_{T}^{\le N} \overset{\mathbb{P}}{\underset{n \rightarrow \infty}{\longrightarrow}} \widehat{\mathbb{X}}^{\le N}_{T}$ for the Euclidean metric. It immediately follows that $\left(\widehat{\mathbb{X}^{(n)}}_{T}^{\word{w}}\right)_{\word{w} \in B} \overset{\mathbb{P}}{\underset{n \rightarrow \infty}{\longrightarrow}} (\widehat{\mathbb{X}}^{\word{w}}_{T})_{\word{w} \in B}$, where $(\widehat{\mathbb{X}}^{\word{w}}_{T})_{\word{w} \in B}$ is linearly independent for the almost-sure equality by Theorem \ref{main result 2}. We finally use the fact that, for any finite dimension $m \in \mathbb{N}$, the set of random vectors taking values in $\mathbb{R}^m$ whose components are linearly independent for the almost-sure equality is open for the weak convergence topology. Hence, it follows that there exists a deterministic integer $n_0 \in \mathbb{N}$ such that for all $n \ge n_0$, the family $\left(\widehat{\mathbb{X}^{(n)}}_{T}^{\word{w}}\right)_{\word{w} \in B}$ is linearly independent for the almost-sure equality. Moreover, since $B$ is a basis of words for $\mathcal{W}_{N}$ and by (i) of Remark \ref{remark base libre and existence and uniqueness of solution}, the family  $\left(\widehat{\mathbb{X}^{(n)}}_{T}^{\word{w}}\right)_{\word{w} \in B}$ is a basis of $\text{Span}\left(\widehat{\mathbb{X}^{(n)}}_{T}^{\word{w}} ~|~ \word{w} \in \mathcal{W}_{\le N}\right)$ for all $n \ge n_0$. 

For the sake of completeness, we prove that the set $LD(\mathbb{R}^m)$ of random vectors taking values in $\mathbb{R}^m$ whose components are linearly dependent is closed for the weak convergence topology. Let $(Z_n)_{n \in \mathbb{N}}$ be a sequence of elements of $LD(\mathbb{R}^m)$ which weakly converges to $Z$. For each $n \in \mathbb{N}$, there exists $\lambda_n \in \mathbb{R}^{m}$ such that $\| \lambda_n\|=1$ and $\mathbb{P}(\lambda_n^{\top} Z_n=0)=1$. Since the unit sphere is compact, we can find a subsequence $(\lambda_{n_k})_{k \in \mathbb{N}}$ which converges to $\lambda$ with $\|\lambda\|=1$. With a slight abuse of notation, we still denote this subsequence by $(\lambda_n)_{n \in \mathbb{N}}$. We can write $\lambda^{\top} Z_n =\lambda_n^{\top} Z_n + (\lambda - \lambda_n)^{\top} Z_n = (\lambda - \lambda_n)^{\top} Z_n$. Using the tightness of $(Z_n)_{n \in \mathbb{N}}$ (which results from its weak convergence to $Z$), we can show that $\lambda^{\top} Z_n = (\lambda- \lambda_n)^{\top} Z_n \xrightarrow{\mathbb{P}} 0$. Moreover, by continuity of the scalar product, $\lambda^{\top} Z_n$ weakly converges to $\lambda^{\top} Z$ and we conclude using the uniqueness of the limit.

\subsection{Proof of Theorem \ref{main result 4}}

Let $N \in \mathbb{N}^*$ be a truncation order, $B \subset \mathcal{W}_{\le N}$ be a basis of words for $\mathcal{W}_N$, and $K \ge 2$ be the number of concatenated affine paths. Since $B \subset \Phi_{\text{f}}(B)$, we obtain $\length{\Phi_{\text{f}}(B)} \ge \length{B}$ and $\text{Card}(\Phi_{\text{f}}(B)) \ge \text{Card}(B)=(d+1)^N$ (using Corollary \ref{cardinal bases}). Therefore, we obtain

\begin{align}
\label{ineq 1}
    C_{\text{f}}(B,K) \ge \text{Card}(B) + 2(K-1) \length{B} \ge (d+1)^{N} + 2(K-1) \length{B}.
\end{align}
Since $\Phi_{\text{f}}({}^{*}\mathcal{W}_{\le N})={}^{*}\mathcal{W}_{\le N}$ and $\text{Card}({}^{*}\mathcal{W}_{\le N})=(d+1)^N$, we have the following equality

\begin{align}
\label{ineq 2}
    C_{\text{f}}({}^{*}\mathcal{W}_{\le N},K) = (d+1)^{N} + 2(K-1) \length{{}^{*}\mathcal{W}_{\le N}}.
\end{align}
By Corollary \ref{corrolary length} we have $\length{{}^{*}\mathcal{W}_{\le N}} \le \length{B}$. Finally, combining \eqref{ineq 1} and \eqref{ineq 2}, we obtain

\begin{align*}
    C_{\text{f}}({}^{*}\mathcal{W}_{\le N},K) &\le C_{\text{f}}(B,K).
\end{align*}
Since $\Phi_{\text{b}}(\mathcal{W}_{\le N}^{*})=\mathcal{W}_{\le N}^{*}$, we can use similar arguments to prove that $C_{\text{b}}(\mathcal{W}_{\le N}^{*},K) \le C_{\text{b}}(B,K)$.

\subsection{Proof of Theorem \ref{main result EFM}} 

In this section, we denote by $\mathcal{L}$ the Lebesgue measure on $\mathbb{R}$: $\forall E\in \mathcal{B}(\mathbb{R}), \mathcal{L}(E)=\int_{\mathbb{R}} \mathbbm{1}_{E}(t) dt$. The proof of Theorem \ref{main result EFM} relies on the following elementary lemma.

\begin{lemma}    
\label{lemme C support}
Let $C \in \mathcal{B}([0,T])$ be a Borel set such that $\mathcal{L}(C \cap I)>0$ for each non-empty open interval $I \subset [0,T]$. Let $f : [0,T] \rightarrow \mathbb{R}$ be a continuous function. If $f \not\equiv 0$, then $\mathcal{L}(\{t \in C ~|~ f(t) \neq 0\})>0$.
\end{lemma}

We now provide the proof of Theorem \ref{main result EFM}.

\begin{proof}[Proof of Theorem \ref{main result EFM}]
We prove by induction on $N \in \mathbb{N}$ the equivalent assertion: for all $N \in \mathbb{N}$, there exists $A_N \in \mathcal{F}$ with $\mathbb{P}(A_N)=1$, such that for all $\omega \in A_N$:
\begin{align}
\label{equiv assertion}
\forall (c_{\word{w}})_{\word{w} \in \mathcal{W}_N} \in \mathbb{R}^{\mathcal{W}_N} \setminus \{0\}, \quad \mathcal{L}\left(\left\{t \in C ~|~\sum_{\word{w} \in \mathcal{W}_N} c_{\word{w}} \widehat{\mathbb{X}}_{t}^{\mathbf{\lambda},\word{w}}(\omega) \neq 0\right\}\right) >0.
\end{align}
For $N \in \mathbb{N}$, $c \in \mathbb{R}^{\mathcal{W}_N}$ and $t \in [0,T]$, we denote by $F_{t}^{c}:=\sum_{\word{w} \in \mathcal{W}_{N}} c_{\word{w}} \widehat{\mathbb{X}}_{t}^{\mathbf{\lambda},\word{w}}$. We have the following decomposition

\begin{align*}
F_t^{c}&= \sum_{\word{w} \in \mathcal{W}_{N-1}} c_{\word{w0}} \widehat{\mathbb{X}}_{t}^{\mathbf{\lambda},\word{w0}} + \sum_{\word{i} \in \{\word{1},\cdots,\word{d}\}} \sum_{\word{w} \in \mathcal{W}_{N-1}} c_{\word{wi}} \widehat{\mathbb{X}}_{t}^{\mathbf{\lambda},\word{wi}} \\
&= \sum_{\word{w} \in \mathcal{W}_{N-1}} c_{\word{w0}} \int_{-\infty}^{t}e^{-\lambda_{\word{w0}}(t-s)} \widehat{\mathbb{X}}_{s}^{\mathbf{\lambda},\word{w}} ds + \sum_{\word{i} \in \{\word{1},\cdots,\word{d} \} }\sum_{\word{w} \in \mathcal{W}_{N-1}} c_{\word{wi}} \int_{-\infty}^{t}e^{-\lambda_{\word{wi}}(t-s)} \widehat{\mathbb{X}}_{s}^{\mathbf{\lambda},\word{w}} \circ dX_{s}^{\word{i}} \\
&=\sum_{\word{w} \in \mathcal{W}_{N-1}} c_{\word{w0}} \int_{-\infty}^{t}e^{-\lambda_{\word{w0}}(t-s)} \widehat{\mathbb{X}}_{s}^{\mathbf{\lambda},\word{w}} ds + \sum_{\word{i} \in \{\word{1},\cdots,\word{d} \} }\sum_{\word{w} \in \mathcal{W}_{N-1}} c_{\word{wi}} e^{-\lambda_{\word{wi}}t}\int_{-\infty}^{0}e^{\lambda_{\word{wi}}s} \widehat{\mathbb{X}}_{s}^{\mathbf{\lambda},\word{w}} d\eta_{s}^{\word{i}} \\
&\quad + \sum_{\word{i} \in \{\word{1},\cdots,\word{d} \} } \sum_{j=1}^{m}\sum_{\word{w} \in \mathcal{W}_{N-1}} c_{\word{wi}} \int_{0}^{t}e^{-\lambda_{\word{wi}}(t-s)} \widehat{\mathbb{X}}_{s}^{\mathbf{\lambda},\word{w}} b_s^{\word{i}} ds \\
&\quad + \sum_{\word{i} \in \{\word{1},\cdots,\word{d} \} } \sum_{j=1}^{m}\sum_{\word{w} \in \mathcal{W}_{N-1}} c_{\word{wi}} \int_{0}^{t}e^{-\lambda_{\word{wi}}(t-s)} \widehat{\mathbb{X}}_{s}^{\mathbf{\lambda},\word{w}} \sigma_{s}^{\word{i},j} \circ dW_{s}^{j} \\
&=V_t + P_t,
\end{align*}

with $P_t := \sum_{j=1}^{m} \sum_{\word{i} \in \{\word{1},\cdots,\word{d}\}} \sum_{\word{w} \in \mathcal{W}_{N-1}}e^{-\lambda_{\word{wi}}t} \int_{0}^{t} e^{\lambda_{\word{wi}}s} \widehat{\mathbb{X}}_{s}^{\mathbf{\lambda},\word{w}} \sigma_{s}^{\word{i},j} dW_s^{j}$ and $V$ is a finite variation process. Moreover, It\^o's lemma gives

\[
dP_t = \sum_{j=1}^{m} \sum_{\word{i} \in \{\word{1},\cdots,\word{d}\}} \sum_{\word{w} \in \mathcal{W}_{N-1}} c_{\word{wi}}\widehat{\mathbb{X}}_{t}^{\mathbf{\lambda},\word{w}} \sigma_{t}^{\word{i},j} dW_t^{j} - \sum_{j=1}^{m} \sum_{\word{i} \in \{\word{1},\cdots,\word{d}\}} \sum_{\word{w} \in \mathcal{W}_{N-1}} c_{\word{wi}} \lambda_{\word{wi}}e^{-\lambda_{\word{wi}}t} \int_{0}^{t} e^{\lambda_{\word{wi}}s}\widehat{\mathbb{X}}_{t}^{\mathbf{\lambda},\word{w}} \sigma_{s}^{\word{i},j} dW_s^{j} dt.
\]

Hence, the local martingale part $M$ of $F^c$ satisfies

\begin{align}
\label{decomposition F}
dM_t = \sum_{j=1}^{m} \sum_{\word{i} \in \{\word{1},\cdots,\word{d}\}} \sum_{\word{w} \in \mathcal{W}_{N-1}} c_{\word{wi}}\widehat{\mathbb{X}}_{t}^{\mathbf{\lambda},\word{w}} \sigma_{t}^{\word{i},j} dW_t^{j}
\end{align}

Let us define the full-measure event $A_N$ on which \eqref{equiv assertion} holds. We denote by $A_{\sigma}$ the event on which the non-degeneracy of $\sigma$
holds pathwise:
\[
A_\sigma
:=
\left\{
\omega\in\Omega:
\mathcal{L}\left(
\left\{
t\in C:
\sigma_t(\omega)\sigma_t^\top(\omega)
\text{ is non-singular}
\right\}
\right)
=
\mathcal{L}(C)
\right\} \in \mathcal{F}.
\]
Since $\sigma_t\sigma_t^\top$ is non-singular
$\mathbbm 1_C(t)dt\otimes d\mathbb P$-a.e., Fubini's theorem gives $\mathbb P(A_\sigma)=1$. For a word $\word{w}$, we denote by $r(\word{w})$ the number of terminal
zeros of $\word{w}$, namely
\[
r(\word{w})
:=
\max
\left\{
r\in\{0,\dots,\|\word{w}\|\}:
\word{w}=\word{v}\word{0_r}
\text{ for some }
\word{v}\in\mathcal W_{\|\word{w}\|-r}
\right\}.
\]

For $N\in \mathbb{N}$ and $r\in\{0,\dots,N\}$, set $\mathcal{W}_{\le N}^{(r)}
:=
\{\word{w}\in\mathcal{W}_{\le N} : r(\word{w})\ge r\}$. Let $\pi_k^0=\{0=t_0^k<t_1^k<\cdots<t_{m_k}^k=T\}$ be a deterministic sequence of partitions of $[0,T]$ such that $|\pi_k^0|\to0$. For $\word{w}\in\mathcal{W}_{\le N}^{(0)}$, the process $Y^{0,\word{w}}:=\widehat{\mathbb{X}}^{\mathbf{\lambda},\word{w}}$ is a continuous semimartingale. For $r \in \{1,\cdots,N\}$ and $\word{w}\in\mathcal{W}_{\le N}^{(r)}$, the process $\widehat{\mathbb{X}}^{\mathbf{\lambda},\word{w}}$ is $r$-times continuously differentiable and $Y^{r,\word{w}}:=\frac{d^r}{dt^r}
\widehat{\mathbb{X}}^{\mathbf{\lambda},\word{w}}$ is a continuous semimartingale. Since the collection of continuous semimartingales $\left(
Y^{r,\word{w}}
\right)_{\word{w}\in\mathcal{W}_{\le N}^{(r)},r\in \{0,\cdots,N\}}$ is finite, and using \cite{revuz1999continuous}[Chapter IV. Theorem 1.9], there exists a deterministic
subsequence of $(\pi_k^0)_{k\ge1}$, denoted by
$(\pi_k)_{k\ge1}$, and a full-measure event $\Omega_{\mathrm{qv}}^{N} \in \mathcal{F}$, such that for every pair $Y^{(1)},Y^{(2)}$ in this finite
family and for every $\omega \in \Omega_{\mathrm{qv}}^{N}$,
\begin{align}
\label{pathwise bracket}
\sum_{\substack{i=0,\dots,m_k-1, t_{i+1}^k\le t}}
\left(Y^{(1)}_{t_{i+1}^k}(\omega)-Y^{(1)}_{t_i^k}(\omega)\right)
\left(Y^{(2)}_{t_{i+1}^k}(\omega)-Y^{(2)}_{t_i^k}(\omega)\right)
\underset{k \to \infty}{\longrightarrow}
\langle Y^{(1)},Y^{(2)}\rangle_t(\omega)
\end{align}
uniformly in $t\in[0,T]$. On this full-measure event, by bilinearity, every linear combination $Y$ of the finite family $\left(
Y^{r,\word{w}}
\right)_{\word{w}\in\mathcal{W}_N^{(r)}, r\in \{0,\cdots,N\}}$ satisfies \eqref{pathwise bracket} when replacing $Y^{(1)}$ and $Y^{(2)}$ by $Y$. Finally, we define
\[
A_N
:=
A_\sigma
\cap
\Omega_{\mathrm{qv}}^{N} \in \mathcal{F},
\]
which is a finite intersection of events of probability one. Hence $\mathbb P(A_N)=1$.
\newline
\newline
\underline{Initialization step.} The property is trivially satisfied for $N=0$ since $\widehat{\mathbb{X}}^{\mathbf{\lambda},\emptyword}_t=1$.
\newline
\newline
\underline{Induction step.} Let $N\in\mathbb N^*$, and assume that the assertion holds at all levels
$0,\dots,N-1$. Let $\omega\in A_{N}$, and let $(c_{\word{w}})_{\word{w}\in\mathcal W_{N}}
\in
\mathbb R^{\mathcal{W}_{N}}\setminus\{0\}$. Let us prove that $\mathcal{L}\left(
\left\{
t\in C~|~ F_t^{c}(\omega)\neq0
\right\}
\right)>0$. Since $F^{c}(\omega)$ is continuous, it suffices to prove that $F^{c}(\omega) \not\equiv 0$ before applying Lemma \ref{lemme C support}. Let $(c_{\word{w}})_{\word{w} \in \mathcal{W}_{N}} \in \mathcal{W}_{N} \setminus \{0\}$. 
\newline
\newline
\textbf{First case: $c_{\word{w}}=0, \forall \word{w} \in \mathbb{R}^{\mathcal{W}_{N}} \setminus \{\word{0_{N}}\}$.}
\newline
Since $c_{\word{w}}=0, \forall \word{w} \in \mathbb{R}^{\mathcal{W}_{N}} \neq 0$, we necessarily have $c_{\word{0_{N}}}\neq0$. Hence $F_t^{c}(\omega)=c_{\word{0_{N}}}\widehat{\mathbb{X}}^{\mathbf{\lambda},\word{0_{N}}}_t(\omega)
=
\frac{c_{\word{0_{N}}}}{\lambda_0^{N}N!}
\neq0$ for all $t \in [0,T]$.
\newline
\newline
\textbf{Second case: $\exists \word{i^{*}} \in \{\word{1},\cdots,\word{d}\}, (c_{\word{vi^{*}}})_{\word{v} \in\mathcal{W}_{N-1}}\neq0$.}
\newline
For $\word{i} \in \{\word{1},\cdots,\word{d}\}$ and $t \in [0,T]$, we denote by $A^{\word{i}}_t:=\sum_{\word{v} \in \mathcal{W}_{N-1}} c_{\word{vi}} \widehat{\mathbb{X}}_{t}^{\mathbf{\lambda},\word{v}}$. Since $\omega \in A_{N} \subset A_{N-1}$, and using the induction at level $N-1$, we deduce that $\mathcal{L}(\{t \in C ~|~ A^{\word{i^{*}}}_t(\omega) \neq 0\})>0$. Moreover, since $\omega \in A_{\sigma}$, we know that $\mathcal{L}(\{t \in C ~|~ \sigma_{t}(\omega) \sigma_{t}^{\top}(\omega) \text{ is non-singular}\})=\mathcal{L}(C)>0$. Hence, the intersection $\mathcal{I}:=\{t \in C ~|~ A^{\word{i^{*}}}_t(\omega) \neq 0\} \cap \{t \in C ~|~ \sigma_{t}(\omega) \sigma_{t}^{\top}(\omega) \text{ is non-singular}\}$ is of positive Lebesgue measure. Let $t \in \mathcal{I}$. Since $A_{t}^{\word{i^{*}}}(\omega) \neq 0$ then $(A_{t}^{\word{i}}(\omega))_{\word{i} \in \{\word{1}, \cdots,\word{d}\}} \neq 0$. Moreover, $\sigma_{t}(\omega) \sigma_{t}^{\top}(\omega)$ is non singular, then $\sum_{j=1}^{m} \left(\sum_{\word{i} \in \{\word{1},\cdots,\word{d}\}}A_{t}^{\word{i}}(\omega) \sigma_{t}^{\word{i},j}(\omega)\right)^2 >0$. Then, the following inclusion holds: $\mathcal{I} \subset \left\{ t \in C ~|~ \sum_{j=1}^{m} \left(\sum_{\word{i} \in \{\word{1},\cdots,\word{d}\}}A_{t}^{\word{i}}(\omega) \sigma_{t}^{\word{i},j}(\omega)\right)^2 >0\right\}$. Hence, it implies the following inequality: $\mathcal{L}(\{ t \in C ~|~ \sum_{j=1}^{m} (\sum_{\word{i} \in \{\word{1},\cdots,\word{d}\}}A_{t}^{\word{i}}(\omega) \sigma_{t}^{\word{i},j}(\omega))^2 >0\})>0$. It comes that 
\begin{align}
\label{positivité first case}
\int_{0}^{T}\sum_{j=1}^{m} \left(\sum_{\word{i} \in \{\word{1},\cdots,\word{d}\}}A_{t}^{\word{i}}(\omega) \sigma_{t}^{\word{i},j}(\omega)\right)^2 dt>0.
\end{align}
Using \eqref{decomposition F}, we obtain $\left<F^{c}\right>_t =\int_{0}^{t}\sum_{j=1}^{m} \left(\sum_{\word{i} \in \{\word{1},\cdots,\word{d}\}}A_{s}^{\word{i}} \sigma_{s}^{\word{i},j}\right)^2 ds$ and the inequality \eqref{positivité first case} gives $\left<F^{c}\right>_T(\omega)>0$. Since $\omega \in \Omega_{\mathrm{qv}}^{N}$, then $\sum_{(t_i^k)_{0 \le i \le \operatorname{Card}(\pi_k)-1}} |F^{c}_{t_{i+1}^k}(\omega)-F^{c}_{t_{i}^k}(\omega)|^2 \xrightarrow[k \to \infty]{} \left<F^{c}\right>_T(\omega) >0$. Finally, we recall that if $\lim_{k \to \infty}\sum_{(t_i^k)_{0 \le i \le \operatorname{Card}(\pi_k)-1}} |F^{c}_{t_{i+1}^k}(\omega)-F^{c}_{t_{i}^k}(\omega)|^2>0$, then $F^{c}(\omega)$ cannot be of finite variation on $[0,T]$ and in particular we deduce that $F^{c}(\omega) \not\equiv 0$. This well known result comes from the following inequality
\[
\sum_{(t_i^k)_{0 \le i \le \operatorname{Card}(\pi_k)-1}} |F^{c}_{t_{i+1}^k}(\omega)-F^{c}_{t_{i}^k}(\omega)|^2 \le \sup_{0 \le i \le \operatorname{Card}(\pi_k)-1} |F^{c}_{t_{i+1}^k}(\omega)-F^{c}_{t_{i}^k}(\omega)|  \times \|F^{c}\|_{1-\text{var};[0,T]},
\]
where the first term in the right-hand side of the inequality converges to $0$ when $k \to \infty$ by continuity of $F^{c}(\omega)$ on $[0,T]$.
\newline
\newline
\textbf{Third case: $\forall \word{i} \in \{\word{1},\cdots,\word{d}\}, (c_{\word{vi}})_{\word{v} \in \mathcal{W}_{N-1}}=0$.}
\newline
Since $(c_{\word{w}})_{\word{w} \in \mathcal{W}_{N}} \neq 0$, we necessarily have $(c_{\word{v0}})_{\word{v} \in \mathcal{W}_{N-1}} \neq 0$. If $N=1$ or if the only non-zero coefficient is $c_{\word{0_{N}}}$, then the first case applies. Otherwise, $N \ge2$ and there exists at least one word $\word{v} \in \mathcal{W}_{N-1}\setminus\{\word{0_{N-1}}\}$ such that $c_{\word{v0}} \neq 0$. We denote by $\mathcal{B}:=\{\word{v} \in \mathcal{W}_{N-1} \setminus \{\word{0_{N-1}}\} ~|~ c_{\word{v0}} \neq 0\} \neq \varnothing$. Let us define
\[
r_*
:=
\min
\{
r(\word{v}) ~|~ \word{v} \in \mathcal{B}\}.
\]
Then $0\le r_*\le N-2$. The path $F^c(\omega) = \sum_{\word{v} \in \mathcal{B}} c_{\word{v0}} \widehat{\mathbb{X}}^{\mathbf{\lambda},\word{v0}}(\omega)$ is $(r_*+1)$-times continuously differentiable. For any word $\word{v} \in \mathcal{W}_N$, an easy computation gives 
\begin{align}
\label{dérivée pathwise}
\frac{d}{dt}\widehat{\mathbb{X}}^{\mathbf{\lambda},\word{v0}}_t(\omega)=\widehat{\mathbb{X}}^{\mathbf{\lambda},\word{v}}_t(\omega) - \lambda_{\word{v0}} \widehat{\mathbb{X}}^{\mathbf{\lambda},\word{v0}}_t(\omega).
\end{align}
For all $\word{v} \in \mathcal{B}$, there exists a unique word $\word{\tilde{v}} \in \mathcal{W}_{N -1- r_*}$ such that $\word{v}=\word{\tilde{v}0_{r_*}}$, and we denote by $\mathcal{B}_{r_*}:=\{\word{\tilde{v}} \in \mathcal{W}_{N-1-r_*} ~|~ \word{\tilde{v}0_{r_{*}}} \in \mathcal{B}\}$. By iterating \eqref{dérivée pathwise} $r_*+1$ times on $\widehat{\mathbb{X}}^{\mathbf{\lambda},\word{v0}}=\widehat{\mathbb{X}}^{\mathbf{\lambda},\word{\tilde{v}0_{r_*+1}}}$, we obtain that there exists a family of coefficients $(\mu_{\word{\tilde{v}}})_{\word{\tilde{v}} \in \mathcal{W}_{\le N-1}}$ such that
\begin{align}
\label{derivée pathwise 2}
\frac{d^{r_*+1}}{dt^{r_*+1}}F^{c}_{t}(\omega)=\sum_{\word{\tilde{v}} \in \mathcal{B}_{r_*}} c_{\word{\tilde{v}0_{r_*+1}}}\widehat{\mathbb{X}}^{\mathbf{\lambda},\word{\tilde{v}}}_t(\omega) + \sum_{\word{\tilde{v}} \in \mathcal{W}_{\le N-1}} \mu_{\word{\tilde{v}}}\widehat{\mathbb{X}}^{\mathbf{\lambda},\word{\tilde{v}0}}_t(\omega),
\end{align}
where the second term in the right-hand side of the equality \eqref{derivée pathwise 2} is the value at time $t$ of a finite variation continuous path. By definition of $r_*$, the subset $\widetilde{\mathcal{B}}_{r_*}$ of $\mathcal{B}_{r_*}$ which consists in words not ending with the letter $\word{0}$ is not empty. Hence, we can decompose the first term in the right-hand side of \eqref{derivée pathwise 2} into $\sum_{\word{\tilde{v}} \in \mathcal{B}_{r_*}} c_{\word{\tilde{v}0_{r_*+1}}}\widehat{\mathbb{X}}^{\mathbf{\lambda},\word{\tilde{v}}}_t(\omega) = \sum_{\word{\tilde{v}} \in \widetilde{\mathcal{B}}_{r_*}} c_{\word{\tilde{v}0_{r_*+1}}}\widehat{\mathbb{X}}^{\mathbf{\lambda},\word{\tilde{v}}}_t(\omega) + \sum_{\word{\tilde{v}} \in \mathcal{B}_{r_*} \setminus \widetilde{\mathcal{B}}_{r_*}}c_{\word{\tilde{v}0_{r_*+1}}}\widehat{\mathbb{X}}^{\mathbf{\lambda},\word{\tilde{v}}}_t(\omega)$. The second part of the previous equality is also the value at time $t$ of a finite variation path. We denote by $Z$ the continuous semimartingale defined as $Z_t:=\sum_{\word{\tilde{v}} \in \mathcal{B}_{r_*}} c_{\word{\tilde{v}0_{r_*+1}}}\widehat{\mathbb{X}}^{\mathbf{\lambda},\word{\tilde{v}}}_t + \sum_{\word{\tilde{v}} \in \mathcal{W}_{\le N-1}} \mu_{\word{\tilde{v}}}\widehat{\mathbb{X}}^{\mathbf{\lambda},\word{\tilde{v}0}}_t$ for all $t \in [0,T]$. Its local martingale part is the local martingale part of $\sum_{\word{\tilde{v}} \in \widetilde{\mathcal{B}}_{r_*}} c_{\word{\tilde{v}0_{r_*+1}}}\widehat{\mathbb{X}}^{\mathbf{\lambda},\word{\tilde{v}}}_t$. Since $0 \le r_* \le N-2$, it comes that for all words $\word{\tilde{v}} \in \widetilde{\mathcal{B}}_{r_*}= \mathcal{B}_{r_*} \cap \mathcal{W}_{<\infty}^{*}\subset\mathcal{W}_{N-1-r_*}^*$, we have $\|\word{\tilde{v}}\|\ge 1$. Hence, for any word $\word{\tilde{v}} \in \widetilde{\mathcal{B}}_{r_*}$, there exists a unique letter $\word{i} \in \{\word{1},\cdots,\word{d}\}$ and a unique word $\word{\tilde{u}} \in \mathcal{W}_{N-r_*-2}$ such that $\word{\tilde{v}}=\word{\tilde{u}i}$. For each letter $\word{i} \in \{\word{1},\cdots,\word{d}\}$, we denote by $\widetilde{\mathcal{B}}_{r_*,\word{i}}:=\{\word{\tilde{u}} \in \mathcal{W}_{N-r_*-2} ~|~ \word{\tilde{u}i} \in \widetilde{\mathcal{B}}_{r_*}\}$, and 
\begin{align}
\label{union}
\widetilde{\mathcal{B}}_{r_*}=\bigsqcup_{\word{i} \in \{\word{1},\cdots,\word{d}\}}\widetilde{\mathcal{B}}_{r_*,\word{i}}.
\end{align}
For $\word{i}\in\{\word{1},\cdots,\word{d}\}$, set $A_t^{\word{i}}
:=
\sum_{\word{\tilde{u}}\in\mathcal W_{N-r_*-2}}
c_{\word{\tilde{u}}\word{i}\word{0_{r_*+1}}}
\widehat{\mathbb{X}}^{\mathbf{\lambda},\word{\tilde{u}}}_t$. Since $\widetilde{\mathcal{B}}_{r_*} \neq \varnothing$ and using \eqref{union}, there exists
$\word{i^{*}}\in\{\word{1},\cdots,\word{d}\}$ such that $\widetilde{\mathcal{B}}_{r_*,\word{i}}\neq \varnothing$. In particular, it comes that
\[
\left(
c_{\word{\tilde{u}}\word{i^{*}}\word{0_{r_*+1}}}
\right)_{\word{\tilde{u}}\in\mathcal{W}_{N-r_*-2}}
\neq 0.
\]
Since $\omega\in A_{N}\subset A_{N-r_*-2}$, the induction hypothesis gives $\mathcal{L}\left(
\left\{
t\in C:
A_t^{\word{i^{*}}}(\omega)\neq0
\right\}
\right)>0$. Similar arguments as for the previous case gives 

\[
\sum_{(t_i^k)_{0 \le i \le \operatorname{Card}(\pi_k)-1}} \left|\frac{d^{r_*+1}}{dt^{r_*+1}}F_{t_{i+1}^{k}}^c(\omega)-\frac{d^{r_*+1}}{dt^{r_*+1}}F_{t_{i}^{k}}^c(\omega)\right|^2 \xrightarrow[k \to \infty]{} \sum_{j=1}^{m}
\left(
\sum_{\word{i}\in\{\word{1},\cdots,\word{d}\}}
A_t^{\word{i}}(\omega)\sigma_t^{i,j}(\omega)
\right)^2dt>0.
\]
Consequently $\frac{d^{r_*+1}}{dt^{r_*+1}}F^c(\omega)
\not\equiv0$. Hence, $F^c(\omega)\not\equiv0$.
\end{proof}

\mbox{}
\clearpage
\bibliographystyle{plain}
\bibliography{ref}

\end{document}